\numberwithin{equation}{section}
\newtheorem{theorem}{Theorem}[section]
\newtheorem{proposition}[theorem]{Proposition}
\newtheorem{lemma}[theorem]{Lemma}
\newtheorem{corollary}[theorem]{Corollary}
\theoremstyle{definition}
\newtheorem{Remark}[theorem]{Remark}
\newtheorem*{remark}{Remark}
\newtheorem{definition}[theorem]{Definition}
\newtheorem{example}[theorem]{Example}
\newcommand{\dee}{{\partial_z}}
\newcommand{\deeb}{{\partial_{\overline z}}}
\newcommand\Z{\mathbf{Z}}
\newcommand\R{\mathbf{R}}
\newcommand\T{\mathbf{T}}
\newcommand\C{\mathbf{C}}
\newcommand\D{\mathbf{D}}
\renewcommand\P{\mathbf{P}}
\newcommand\E{\mathbf{E}}
\newcommand\complex{\mathbf{C}}
\newcommand\eps{\varepsilon}
\newcommand\zbar{{\bar{z}}}
\newcommand\supp{{\operatorname{supp}}}
\begin{document}

\title{  Homogenization of iterated singular integrals with applications to random quasiconformal maps}

\author{Kari Astala, Steffen Rohde, Eero Saksman, Terence Tao}

\date{}
\maketitle

\begin{abstract}

We study homogenization of iterated  randomized singular integrals and homeomorphic solutions to the Beltrami differential equation with a random Beltrami coefficient. More precisely, let
 $(F_j)_{j \geq 1}$ be a sequence of normalized homeomorphic solutions to the  planar Beltrami equation $\deeb F_j (z)=\mu_j(z,\omega) \dee F_j(z),$ where the random dilatation  satisfies
$|\mu_j|\leq k<1$ and has  locally periodic statistics,
for example of the type
\begin{equation}\label{eq:expl}
\mu_j (z,\omega)=\phi(z)\sum_{n\in\Z^2}g(2^j z-n,X_{n}(\omega)),
\end{equation}
where $g(z,\omega)$ 
decays rapidly in $z$, the random variables $X_{n}$ are i.i.d., and  $\phi\in C^\infty_0$. We establish the  
almost sure  and local uniform convergence as $j\to\infty$ of the maps
$F_j$ to a deterministic quasiconformal limit $F_\infty$.

This 
result  is   obtained as an  application of  our main theorem, 
which deals with homogenization of iterated  randomized singular integrals. As a special case of our theorem,
let $T_1,\ldots , T_{m}$ be translation and dilation invariant singular integrals on ${\bf R}^d, $ and consider a $d$-dimensional version of $\mu_j$, e.g., as defined above or within a more general setting, see Definition \ref{smf}.  We then prove that  there is a
deterministic function $f$ such that  almost surely as $j\to\infty$,
$$
\mu_j T_{m}\mu_j\ldots T_1\mu_j\to f \quad \textrm{weakly in } L^p,\quad 1 <  p < \infty\ . 
$$
\end{abstract}

\maketitle

\section{Introduction and statement of results}

\subsection{Background and motivation - a birds eye view}\label{ss:bird}
The purpose of this paper is twofold: We initiate a systematic study of random quasiconformal homeomorphisms,
and we develop a framework for homogenization of iterated singular integrals. Our main results regarding
the former topic will be obtained as consequences of our results regarding the latter,
which are of independent interest.
Since the precise statements of our results require some preparation, in this section
we give a brief and informal description of our work. 

Recall that quasiconformal maps are homeomorphic $W^{1,2}_{loc}-$solutions of the 
Beltrami equation 
\begin{equation}\label{eq:beltrami}
\partial_{\overline z} F = \mu \partial_z F,
\end{equation}
and that for any measurable function $\mu \colon \C\to \C$ with $\|\mu\|_\infty<1$
there is an essentially unique quasiconformal solution.
Recent developments have shown an emerging need for a theory of random quasiconformal maps. 
For example, simple closed planar curves can be described via their welding homeomorphism, and random loops such as those associated with the Schramm-Loewner evolution SLE lead to random circle homeomorphisms. Beginning with the work of Sheffield, these welding homeomorphisms can be described in terms of Liouville Quantum Gravity.
It is still open to analytically solve the ``welding problem'' of re-constructing the loops from these homeomorphisms. The standard approach of solving welding problems is via the Beltrami equation \eqref{eq:beltrami}, leading to random Beltrami coefficients $\mu$ in the case of random weldings. Progress towards solving this problem has been made in \cite{AJKS}.

There are also other cases in random geometry where quasiconformal mappings arise naturally. For instance,  certain scaling limits of domino tilings  \cite{CKP}, and more generally of dimer models \cite{KOS}, exhibit different limiting phases. Quasiconformal mappings appear particularly useful in describing their geometry  \cite{ADPZ}. Moreover,
there is a connection to homogenization of random conductance models, which in turn can be thought of as a special case of  Brownian motion in a random environment. Here we refer to the review \cite{B}.

In another direction, in material sciences it is important to understand  random materials structures, modelled by elliptic PDE's, and look for global or homogenised properties  of the material. From the  vast literature on homogenization of random PDE's we mention as examples \cite{PV1},\cite{GO}, \cite{AKM}, where the last mentioned monograph contains an extensive bibliography. 

\bigskip

In the present paper we will approach the  Beltrami equation \eqref{eq:beltrami}, with a random coefficient $\mu$, via the method of singular integral  operators.
We will mostly work with solutions normalized by 
\begin{equation}\label{3point}
F(w)=w\quad\text{ for }\quad w\in\{ 0,1,\infty\}.
\end{equation} 
However, in the special deterministic case where $\mu$ happens to be compactly supported, 
it is often more convenient to work with the
unique homeomorphic solution to  \eqref{eq:beltrami} that has  the \emph{hydrodynamic normalization}
\begin{equation}\label{hydrodynamic}
F(z)-z=o(1) \quad \textrm{as}\quad z\to\infty.
\end{equation}
This so-called 
\emph{principal solution} to \eqref{eq:beltrami} 
can be obtained 
from the Neumann series\footnote{Operators and multipliers in this paper are always applied from right to left unless otherwise specified, thus for instance $\mu T \mu T \mu = \mu T( \mu T\mu)$.}
$$\deeb F = \mu + \mu T\mu + \mu T\mu T \mu +  \ldots
$$
with $T\,$ a specific singular integral operator, the Beurling transform, see \eqref{eq:beurl}  below.  

Therefore we are naturally led to the study of homogenisation phenomena for iterated singular integral operators. Here it is useful to consider the problem from a broader point of view. Our main result on homogenised iterated singular integrals shows that this can be carried out in surprising generality, allowing for flexibility and a wide range of potential applications:

\begin{theorem}\label{th:main22} For each $1\leq k\leq m-1$ let 
$T_k$ be a translation and dilation invariant singular integral. Further, let $\mu^{(1)} = \mu^{(1)}_\delta,\ldots,\mu^{(m)} = \mu^{(m)}_\delta$ be stochastic multiscale functions. Then for any $p\in(1,\infty)$
the iterated singular integral
$$
h_\delta\coloneqq \mu^{(m)}_\delta T_{m-1}\mu^{(m-1)}_\delta\ldots \mu^{(2)}_\delta T_1\mu^{(1)}_\delta
$$
converges weakly in $L^p(\R^d)$ to a deterministic limit function
as $\delta\to 0$ (convergence in probability). 
For the subsequence $h_{2^{-k}}$ the weak convergence takes place almost surely.
\end{theorem}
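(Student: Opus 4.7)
The plan is to prove the theorem by reducing weak $L^p$ convergence to convergence of the scalar pairings $\langle h_\delta, f\rangle$ for a dense family of test functions, and then handling the latter via a two-moment method: convergence of the mean together with vanishing of the variance.

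First, since each $T_k$ is bounded on $L^p(\R^d)$ for $1<p<\infty$ and each $\mu^{(k)}_\delta$ is uniformly bounded in $L^\infty$ with a compactly supported deterministic envelope (inherent in the definition of a stochastic multiscale function, cf.\ Definition~\ref{smf}), one obtains a deterministic uniform bound $\|h_\delta\|_{L^p}\lesssim 1$. A standard density argument then reduces weak $L^p$ convergence to convergence of the real-valued random variables $I_\delta(f)\coloneqq \int_{\R^d} h_\delta(x) f(x)\,dx$ for $f$ ranging over a countable dense subset of $L^{p'}(\R^d)$.

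For each such $f$, I would first identify $\lim_{\delta\to 0}\E\, I_\delta(f)$ by writing out $h_\delta$ as an $m$-fold integral against the product of singular kernels $K_1,\ldots,K_{m-1}$, exchanging expectation with the integrals by Fubini, and exploiting the local periodicity together with independence of the underlying random variables to factorize the joint distribution over distinct lattice sites. A rescaling by $\delta$ and the homogeneity (of degree $-d$) of each $K_k$ then delivers a deterministic limit $I_\infty(f)$. Next, to prove that $\mathrm{Var}\, I_\delta(f)\to 0$, one expands $\E |I_\delta(f)|^2$ as a $2m$-fold integral and splits the domain into an ``off-diagonal'' region, where no lattice index is shared between the two copies of $h_\delta$, and a ``diagonal'' complement. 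Independence causes the off-diagonal part to factorize and cancel $|\E I_\delta(f)|^2$ up to an error governed by the rapid decay of the profile $g$; the diagonal part, whose effective measure is $O(\delta^d)$ times a convergent sum over separations, should yield a power-type bound $\mathrm{Var}\, I_\delta(f)\lesssim \delta^\alpha$. Convergence in probability of $I_\delta(f)$ then follows from Chebyshev, and for the subsequence $\delta=2^{-k}$ the summability $\sum_k 2^{-\alpha k}<\infty$ permits Borel--Cantelli to be applied simultaneously to each test function in the countable dense family, whereupon the deterministic uniform $L^p$ bound upgrades the resulting convergence to almost sure weak $L^p$ convergence of $h_{2^{-k}}$.

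The main obstacle will be the quantitative variance estimate. The difficulty is that each $T_k$ is non-local and broadcasts the $\delta$-scale randomness throughout $\R^d$, so the intuition that ``random variables at lattice separation $\gg 1$ are independent'' must be quantified in a way that survives $m-1$ applications of Calder\'on--Zygmund operators with borderline integrable kernels. I expect the key tool to be a Littlewood--Paley or multiscale decomposition of either the test function or the intermediate functions, combined with careful bookkeeping of which lattice sites are shared across the two copies of $h_\delta$ inside $\E |I_\delta(f)|^2$. Iterating such an analysis through $m$ nested singular integrals, while preserving a power of $\delta$, is the technical core of the argument and presumably where most of the work of the paper will lie.
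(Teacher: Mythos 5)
Your overall skeleton does match the paper's strategy: test against a countable dense family of $L^{p'}$ functions, prove convergence of the mean and a power-type variance bound, and conclude by Chebyshev (convergence in probability) and Borel--Cantelli along $\delta=2^{-k}$ (this is exactly Theorem \ref{mainthm} plus Corollary \ref{co:main}). However, two points are genuine gaps rather than routine details. First, your reduction rests on a misreading of Definition \ref{smf}: stochastic multiscale functions are in general neither bounded in $L^\infty$ nor compactly supported (the envelopes lie only in the H\"older classes $\Lambda^{\alpha,p}$ and the bumps are merely rapidly decaying in $L^p(\R^d\times\Omega)$), so there is no deterministic uniform bound $\|h_\delta\|_{L^p(\R^d)}\lesssim 1$. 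What one actually has is the moment bound $\|h_\delta\|_{L^p(\R^d\times\widetilde\Omega)}\lesssim 1$, obtained from H\"older's inequality and the $L^q$-boundedness of the $T_k$ (Lemmas \ref{fbound} and \ref{uniformLp}); compact support of the envelopes is only a temporary qualitative assumption in the paper, removed by a limiting argument at the end of Section \ref{sec:proof}. Your final ``upgrade'' step must therefore be rephrased in terms of these moment bounds.

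The more serious gap is the first moment. You propose that independence lets the joint law ``factorize over distinct lattice sites,'' after which rescaling and homogeneity of the kernels identify $\lim_{\delta\to 0}\E\int h_\delta f$. But the coincidences of lattice indices contribute at leading order, and they are exactly what makes the limit differ from what weak convergence of the individual $\mu^{(j)}_\delta$ would predict: see Example \ref{ex1}, where $\mu_\delta\to 0$ weakly yet $\mu_\delta T\mu_\delta$ converges to a nonzero limit. Controlling the sum over each coincidence pattern is where almost all of the work lies: the paper partitions $(\Z^d)^m$ according to colorings, applies inclusion--exclusion, and proves by induction on the number of colors (with the split/non-split dichotomy, Lemma \ref{color} and the non-split estimate Lemma \ref{rcm}) that each colored operator applied to $1$ is again a \emph{deterministic} multiscale function (Proposition \ref{mainprop}); this in turn requires the entire deterministic calculus of Section \ref{dmd-sec} (closure under multiplication, closure under singular integrals, and the weak-limit Lemma \ref{weak}). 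Your sketch instead locates the difficulty in the variance, which in the paper is the comparatively routine half: one forced coincidence $n_j=n'_{j'}$ gains a factor $\delta^d$ via the size estimate of Proposition \ref{sizeestimate} and the weighted Calder\'on--Zygmund bound of Lemma \ref{lsib}, together with elementary discrete convolution estimates, and no Littlewood--Paley decomposition is needed. Until the convergence of the expectation is actually established, the proposal does not yet amount to a proof.
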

The stochastic multiscale functions above are a large class of random functions with $\delta-$periodic statistical structure. Their precise definition  is given in Section \ref{random-sec},  and  Section \ref{sec:proof}  is devoted to the proof of Theorem \ref{th:main22}.  
In general,  the multiscale functions need not be bounded or compactly supported. An example of such function is provided by \eqref{eq:expl}.

In the next subsection we present a variety of natural and specific random Beltrami equations $\partial_{\overline z} F_\delta = \mu_\delta \partial_z F_\delta$, where the coefficients $\mu_\delta$ are stochastic multiscale functions  with $\| \mu \|_{\infty}$ bounded by some $k<1$. 
 To complete the picture  we then need methods more specifically related to quasiconformal mappings  to show that the corresponding random solutions $F_\delta$ have almost surely a unique deterministic normalised quasiconformal limit $F_\infty$, see e.g., Theorem \ref{th:main} below.

Finally, we mention that  Theorem \ref{th:main22}  also applies  to many basic homogenization problems of  random partial differential operators, see Example \ref{example:general}
 in Section \ref{ss:sing_int_homogenization}.

\subsection{Quasiconformal homogenization}\label{ss:qc_homogenization}

In this subsection we state our main results on quasiconformal homogenization and illustrate them by means of several model examples  
 of coefficients $\mu_\delta$.
We consider both \emph{deterministic} and \emph{random} quasiconformal maps, though our main emphasis is on the latter case.  It will be convenient to adopt the following rescaling notation:

\begin{definition}[Rescaling notation]\label{Rescale}  If $\delta>0$, $n \in \Z^d$, and $g \colon \R^d \to \C$ is a function, we define the rescaled function $g_{[n,\delta]} \colon \R^d \to \C$ by the formula
$$ g_{[n,\delta]}(x) \coloneqq g \left( \frac{x}{\delta} - n \right).$$
For instance, we will apply this convention to the weight
$$ \langle x \rangle \coloneqq (1 + |x|^2)^{1/2}$$
so that
$$ \langle x \rangle_{[n,\delta]} = \left( 1 + \left| \frac{x}{\delta}-n \right|^2 \right)^{1/2}$$
for any $x \in \R^d$, $\delta>0$, and $n \in \Z^d$.

More generally, if $g \colon \R^d \times \Omega \to \C$ is a function of a spatial variable $x \in \R^d$ and a supplementary variable $\omega \in \Omega$, we define $g_{[n,\delta]} \colon \R^d \times \Omega \to \C$ by the formula
$$ g_{[n,\delta]}(x,\omega) \coloneqq g \left( \frac{x}{\delta} - n, \omega \right).$$
We extend this convention to the complex plane $\C$ by identifying $\C$ with $\R^2$ (and $\Z^2$ with the Gaussian integers $\Z[i]$.
\end{definition}

We will typically apply this convention with functions $g$ that are concentrated near the unit ball $B(0,1)$, in which case the rescaled function $g_{[n,\delta]}$ will be concentrated near the ball $B(n\delta,\delta)$. Conversely, the weight $\langle \cdot \rangle_{[n,\delta]}$ is small in $B(n\delta,\delta)$ and large elsewhere.

\medskip
\noindent
{\bf Model 1:} 
 The deterministic function
\begin{equation}\label{mu-ex1}
\mu_\delta(z) \coloneqq  \varphi(z) \sum_{n\in\Z^2}a_{[n,\delta]}(z),
\end{equation}
where $\varphi \in C^\infty_0(\C)$ is a test function and $a\colon \C \to \C$ is a smooth non-constant  function supported on $[0,1]^2$, and the rescaling $a_{[n,\delta]}$ is defined by Definition \ref{Rescale}.  One assumes that $\|\varphi\|_\infty \| a\|_\infty<1.$

\medskip
\noindent
{\bf Model 2:} Here $\mu_\delta$ is a  random function  given by either
\begin{equation}\label{mu-ex2}
\begin{split}
\mu_\delta(z) &\coloneqq a 1_{Q_0}(z) \sum_{n \in \Z^2} \varepsilon_n (1_{Q_0})_{[n,\delta]}(z) \\
&=  a 1_{Q_0}(z) \sum_{n \in \Z^ 2} \varepsilon_n 1_{n\delta + [0,\delta]^2}(z),
\end{split} \end{equation}
or 
\begin{equation}\label{mu-ex3,5}
\mu_\delta(z) \coloneqq  a \sum_{n \in \Z^2} \varepsilon_n 1_{n\delta + [0,\delta]^2}(z),
\end{equation}
 where $a \in \C$ satisfies $|a|<1$, and $Q_0 \coloneqq [0,1]^2$ is the unit square with corners $0, 1, i, 1+i$, the $\varepsilon_n \in \{-1,+1\}$ are i.i.d. random signs, and $n\delta + [0,\delta]^2$ is the square of sidelength $\delta$ and bottom left corner equal to $n\delta$, $n \in \Z^2$. We could as well allow the $\varepsilon_n$ to be arbitrary i.i.d.  random variables with $|\varepsilon_n|\leq1$.

\medskip
\noindent
{\bf Model 3:} A  more general model is obtained by allowing
the independent `bumps' to have non-compact support and adding
an envelope factor that  varies the size of $\mu$ locally, and is  independent of the scaling $\delta$. Thus,
let $g$ be a rapidly decreasing function and define the random `bump field'
\begin{equation}\label{eq:bf}
B_\delta =  \sum_{n \in \Z^2} \varepsilon_n g_{[n,\delta]},
\end{equation}
where $\varepsilon_n$ are any i.i.d random variables, the $g_{[n,\delta]}$ are defined by Definition \ref{Rescale} and we assume the pointwise bound $|B_\delta|\leq 1.$
Then set
\begin{equation}\label{mu-ex3}
\mu_\delta \coloneqq  \phi 1_U  B_\delta,
\end{equation}
where the `envelope function' $\phi$ satisfies the pointwise bound $|\phi|\leq k$ for some $k<1$ and is H\"older continuous with 
some exponent $\alpha>0$, and $U\subset\C$ is a  domain with
piecewise H\"older-boundary (e.g., $U$ could as well be the whole plane).

  If we  specialize to the case $\phi\equiv a$,
  where $a$ is a complex constant with $|a|<1,$ 
  $\mu_\delta$ becomes a constant multiple of the random  bump field \eqref{eq:bf}:  
\begin{equation}\label{mu-ex2,5}
\mu_\delta(z) \coloneqq  a  B_\delta(z).
\end{equation}

\medskip

In each of the above model cases,
let $F_\delta$ be the unique solution to the (random or deterministic) Beltrami equation
\begin{equation}\label{eq:randombeltrami}
\deeb F_\delta =\mu_\delta \, \dee F_\delta
\end{equation}
with $3$-point normalization \eqref{3point}.
The  basic question of quasiconformal homogenization then asks if
 the sequence $F_{2^{-k}}$ converges as $k\to\infty$. 
We answer this question  by showing that  there is almost sure convergence to a deterministic limit homeomorphism.

We will prove a  general result that covers all the above models as special cases,  and is substantially of more general nature. In order to state the result we need to define the 
admissible 
envelope functions and  
random bump fields. 
\begin{definition}[Random  bump fields]\label{de:rbf} We define \emph{random bump data} to be a pair $(g,X)$, where $X$ is a random variable taking values in\footnote{We place our random parameter in the space $\R$ for sake of concreteness, but this space could be replaced by a more general measurable space, e.g., $\R^d$ for any $d$, if one wished.} $\R$, and $g\colon \C \times \R \to \C$ be a measurable function  with rapid  decrease in the first variable,
 \begin{equation}\label{mu-decay}
|g(z,y)|\leq C_M \langle z \rangle^{-M} \quad \textrm{for all}\;\; M\geq 1\quad
\textrm{and}\;\; z\in\C, y\in\R
\end{equation}
which obeys the pointwise bound
$$
\left|\sum_{n\in\Z^2} g(z-n,y_n)\right|\leq 1
$$
for all $z\in\C$ and all real sequences $(y_n)_{n\in\Z^2}$.  We define a \emph{random bump field} with data $(g,X)$ and scaling parameter $\delta>0$ to be a random field of the form
\begin{equation}\label{eq:rbf1}
B_\delta(z)\coloneqq \sum_{n\in\Z^2}g_{[n,\delta]}(z,X_n)
\end{equation}
where the rescaling $g_{[n,\delta]}$ is defined by Definition \ref{Rescale}, and $X_n, n \in \Z^2$ are independent copies of the random variable $X$.
\end{definition}

In turn, the admissible envelope functions are as follows:

\begin{definition}[Beltrami envelope functions]\label{de:ref} 
A measurable function $\phi\colon \C\to\C$ is a \emph{Beltrami envelope function}
if there is $k\in (0,1)$ such that $|\phi(z)|\leq k$ for almost every  $z\in \C$ and $\phi$ is locally H\"older-continuous
in $L^1$-norm: there is $\alpha>0$ such that for any $R>0$ there is 
$C_R<\infty$ with
$$
\| \Delta_h (1_{B(0,R)}\phi)\|_{L^1(\C)}
\leq C_R|h|^\alpha,\quad\textrm{for}\;\; |h|\leq 1
$$
where the difference operator $\Delta_h$ is defined by
\begin{equation}\label{difference-def}
\Delta_h f(x) \coloneqq f(x+h) - f(x).
\end{equation}
\end{definition}
 \begin{example}\label{ex:envelope} Assume that $\phi\colon \C\to\C$ is
$\alpha$-H\"older continuous and satisfies $|\phi|\leq k<1$.  Assume also that $U\subset \complex$ is a domain with locally H\"older-regular boundary. Then it is easy to verify that $1_U \phi$
is a Beltrami envelope function. This holds also true if  (locally) the Minkowski dimension of $\partial U$
is strictly less than  $2$.
\end{example}

 In each of  the models 1-3 above
the random dilatation can be written in the form 
$\mu_\delta=\phi \, B_\delta,$ where $\phi$ is a Beltrami envelope function
and $B_\delta$ a random bump field. Hence our result on quasiconformal homogenization ,
to be stated next, covers all these  cases.

\begin{theorem}\label {th:main} Let $(g,X)$ be random bump data, and let $\phi$ be a Beltrami envelope function. For $\delta >0$ let
$$,
\mu_\delta(z) = \phi (z) B_\delta (z),
$$ 
where $B_\delta$ is the random bump field 
\eqref{eq:rbf1}
determined by $g,X$.
Denote by $F_j$, $j\geq 1$, the $3$-point normalized solution to the random Beltrami equation
\begin{equation}\label{eq:Fj}
\deeb F_j =\mu_{2^{-j}} \dee F_j\,  .
\end{equation}
\begin{itemize}
\item[(i)] There is a unique deterministic limit function $F_\infty$ such that $F_\infty\colon \C\to \C$ is a quasiconformal homeomorphism  and as $j\to\infty$, almost surely
$$
F_{j}\to F_\infty\quad \textrm{locally uniformly.}
$$
\item[(ii)]   Assume that the envelope function
 $\phi$ is continuous at $z_0$. Then  the dilatation $\mu_{F_\infty}$ of the limit function $F_\infty$ is continuous at $z_0$, 
 and $\mu_{F_\infty}(z_0)$ 
 depends only on the random bump data $(g,X)$ and on the value $\phi(z_0)$.  More precisely, one has  
 $$
 \mu_{F_\infty}(z_0)= h_{(g,X)}( \phi(z_0)),
 $$
 where the function $h_{(g,X)}\colon \{ |z|<1\}\to \{ |z|<1\}$  is continuous.
 \item[(iii)] If the random variables $\varepsilon_n$ are symmetric, the limit $F_\infty$ in  both cases of  Model 2,   \eqref{mu-ex2} and \eqref{mu-ex3,5},
  is given by the identity map, $F_\infty(z)=z$ for all $z$.  This is not 
 necessarily the case in the more general setting of \eqref{mu-ex2,5}.
\end{itemize}
 \end{theorem}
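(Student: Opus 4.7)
I would deduce Theorem~\ref{th:main} from Theorem~\ref{th:main22} by applying it termwise to the Neumann expansion of the Beltrami derivatives of $F_j$, and then upgrading weak-$L^p$ convergence of derivatives to locally uniform convergence of maps via standard $K$-quasiconformal compactness. For part (i), since $\|\mu_{2^{-j}}\|_\infty\le k<1$ uniformly, $\{F_j\}$ is a normal family of $K$-quasiconformal homeomorphisms with $K=(1+k)/(1-k)$, and the $3$-point normalization forces every locally uniform subsequential limit to be such a homeomorphism of $\C$. To identify the limit, first reduce by truncating $\phi\mapsto\phi_R=\chi_{B(0,R)}\phi$ and post-composing by the unique affine map restoring the $3$-point condition to the case of compactly supported dilatation, where the principal solution satisfies
$$\deeb F_j=\sum_{\ell\ge 0}(\mu_{2^{-j}}T)^\ell\mu_{2^{-j}},\qquad \dee F_j=1+T\deeb F_j.$$
For $p$ in a neighborhood of $2$, the bound $k\|T\|_{L^p\to L^p}<1$ makes this series absolutely convergent in $L^p$. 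Since $\mu_{2^{-j}}=\phi B_{2^{-j}}$ is a stochastic multiscale function, Theorem~\ref{th:main22} provides almost sure weak-$L^p$ convergence of each summand to a deterministic $\nu^{(\ell)}$; summing produces $\deeb F_j\rightharpoonup\nu:=\sum_\ell\nu^{(\ell)}$ and $\dee F_j\rightharpoonup 1+T\nu$ almost surely. Combined with QC compactness, this identifies any subsequential limit $F_\infty$ as the unique $3$-point-normalized QC solution with deterministic coefficient $\nu/(1+T\nu)$, giving full almost sure convergence.

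For part (ii), continuity of $\phi$ at $z_0$ lets us freeze $\phi\equiv\phi(z_0)$ on $B(z_0,r)$ with error in the Neumann series controlled by the $L^1$-H\"older condition of Definition~\ref{de:ref} together with the $L^p$ operator bounds. The frozen model has translation-invariant dilatation, so its homogenized coefficient is a constant $h_{(g,X)}(\phi(z_0))$; continuity of $h_{(g,X)}$ in its scalar argument follows from the same perturbative estimate.

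For part (iii), by (ii) it suffices to show $h_{(g,X)}(a)=0$ for Model~2 bump data, equivalently $\sum_\ell L_\ell=0$ with $L_\ell=\lim_j(\mu_j T)^\ell\mu_j$ in the constant-envelope variant \eqref{mu-ex3,5}. Two symmetries kill every $L_\ell$. First, $\varepsilon_n\stackrel{d}{=}-\varepsilon_n$ yields $\mu_\delta\stackrel{d}{=}-\mu_\delta$, hence $L_\ell=(-1)^{\ell+1}L_\ell$, eliminating even $\ell$. Second, the quarter-turn $R(z)=iz$ permutes the square cells of the grid and relabels the $\varepsilon_n$ by an automorphism of $\Z[i]$, so $\mu_\delta\circ R\stackrel{d}{=}\mu_\delta$; a direct change of variables shows $T(f\circ R)=-(Tf)\circ R$ for the Beurling transform (the factor being $e^{i\pi}$ from the square of $R'/\overline{R'}$), and iterating gives $L_\ell=(-1)^\ell L_\ell$, eliminating odd $\ell$. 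Hence $\nu=0$, so $\deeb F_\infty=0$ and $F_\infty$ is a $3$-point-normalized entire homeomorphism, i.e.\ the identity. Case \eqref{mu-ex2} reduces to the same conclusion by applying (ii) pointwise with $\phi=a 1_{Q_0}$, using that $h_{(g,X)}(0)=0$ trivially. The argument breaks down for \eqref{mu-ex2,5} because a generic $g$ is not $90^\circ$-symmetric, so $\mu_\delta\circ R$ need not have the same law as $\mu_\delta$.

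The principal technical obstacle is to make Step~1 of (i) rigorous under the $3$-point (rather than hydrodynamic) normalization when $\phi$ is not compactly supported: the truncation-plus-affine-correction procedure must commute with the weak-$L^p$ limits furnished by Theorem~\ref{th:main22}, which requires uniform tail estimates using the rapid decay \eqref{mu-decay} of $g$ together with the bound $|\phi|\le k<1$.
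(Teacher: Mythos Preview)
Your outline for part~(i) is essentially the paper's: truncate the envelope to compact support, apply Theorem~\ref{th:main22} term by term to the Neumann series, upgrade weak-$L^p$ convergence of the terms to uniform convergence of the principal solutions (the paper does this via compactness of the Cauchy transform $C\colon L^p(B)\to C^\alpha(\C)$ for $p>2$, which is slightly slicker than QC normal-family arguments, but equivalent), and finally remove the truncation by a locality lemma for normalized $K$-quasiconformal maps agreeing on large balls.

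For part~(ii) the paper takes a more robust route than your Neumann-series perturbation. Two ingredients you gloss over require genuine arguments. First, ``the frozen model has translation-invariant dilatation, so its homogenized coefficient is constant'' is not immediate: the random dilatation $aB_{2^{-j}}$ is only shift-invariant in law modulo $2^{-j}\Z^2$, so one must argue (as the paper does in Lemma~\ref{le:constant}) that the deterministic limit inherits periodicity under all dyadic translations, hence is constant. Second, and more seriously, your claim that freezing $\phi$ near $z_0$ perturbs the Neumann series by a quantity controlled via the $L^1$-H\"older condition only yields global $L^p$-closeness of the Beltrami derivatives; it does not by itself give pointwise closeness of $\mu_{F_\infty}$ at $z_0$. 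The paper instead proves a direct quasiconformal lemma (Lemma~\ref{le:locality}): if two convergent sequences of $k$-QC maps have $|\mu_{f_j}-\mu_{g_j}|\le\varepsilon$ pointwise, then $|\mu_f-\mu_g|\le\varepsilon(1+k^2)/(1-k^2)$, established by observing that $f_j\circ g_j^{-1}$ is $\varepsilon/(1-k^2)$-quasiconformal. This bypasses the Neumann series entirely and is what makes the locality of $\mu_{F_\infty}$ transparent.

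For part~(iii), your two-symmetry argument (sign flip kills even $\ell$, quarter-turn kills odd $\ell$) is correct and is a pleasant alternative to the paper's version, which applies both symmetries at once to the map $\widetilde F_j(z)=F_j(i)^{-1}F_j(iz)$ and reads off $A=0$ from the linear form of $F_\infty$. However, the final clause of~(iii) asserts that the identity conclusion \emph{fails} in the generality of~\eqref{mu-ex2,5}, and here your proposal has a genuine gap: saying the symmetry argument breaks down is not a proof of non-triviality. The paper constructs an explicit counterexample by expanding $f_a$ as a power series in the parameter $a$, showing that $f_a$ is non-linear for all but countably many $a$ unless every homogenized term $\widetilde\psi_m$ vanishes, and then computing directly that $\lim_j \E\int \mu_{j,1}T\mu_{j,1}=\int_\C g\cdot Tg$, which can be made nonzero by taking $g=\varphi(\cdot-A)+\varphi(\cdot+A)$ for a bump $\varphi$ and large $A$. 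You would need to supply some such construction.
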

 
 \medskip
 
\noindent The    proof 
of this theorem is contained in Section \ref{se:qchomogenization},
which also contains other related results and remarks. In particular, the above theorem 
applies as well to the deterministic homogenization problem.
We also stress that  the coefficient $\mu$ need not be compactly supported, in spite of the fact that the proof is based on the Neumann series.

\begin{figure}[t]
\centering
  \includegraphics[height=125mm]{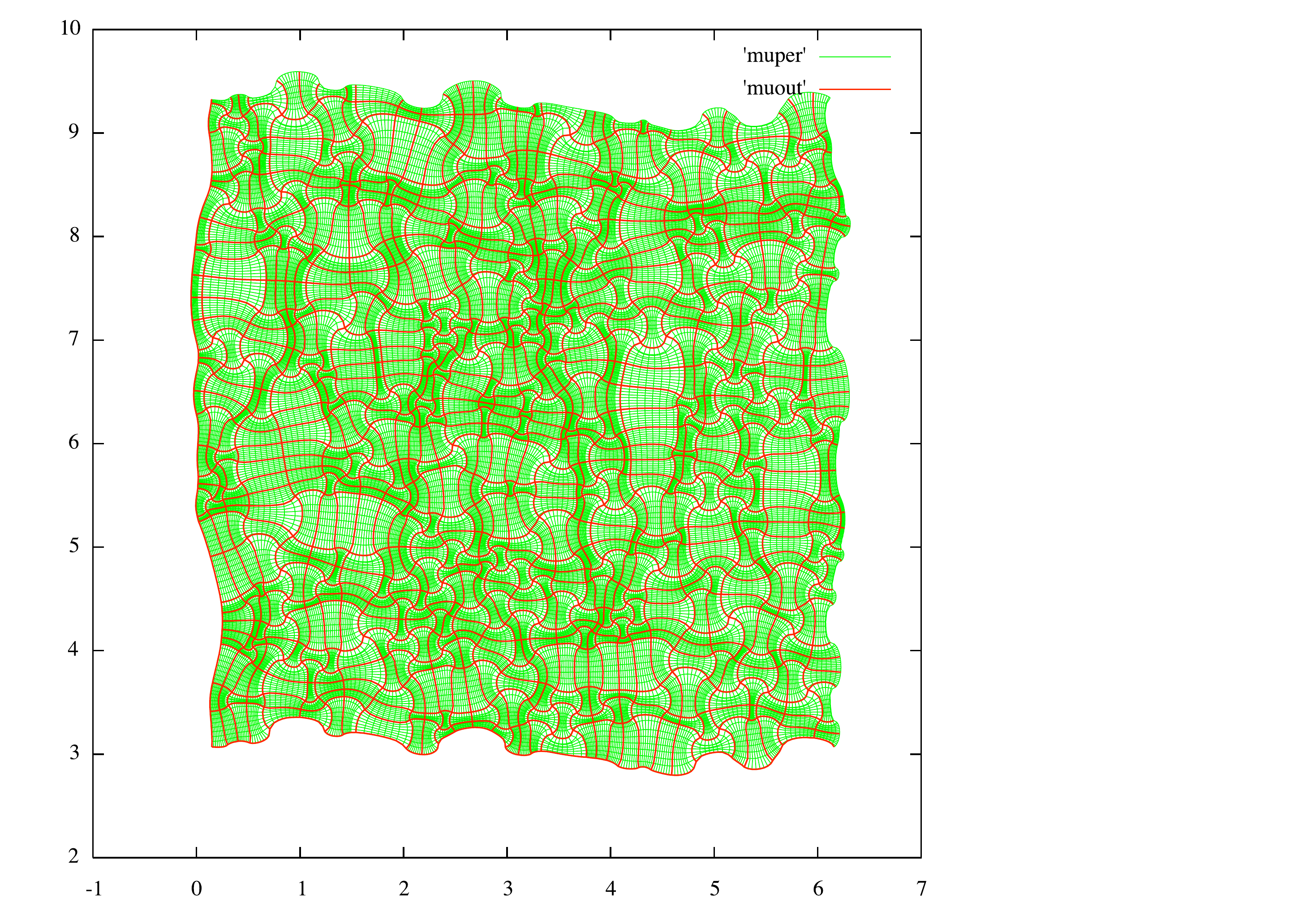}
  \caption{A random qc-map obtained by Model 2 with
  $a=1/2.$
  We thank David White for help in producing the picture.}
  \label{whitfig}
 \end{figure}

\begin{Remark}\label{re:explain} One should note that in the above
result there is no need for the stochastic bump fields corresponding to
different $\delta$'s to be independent. Indeed, their stochastic relation can be arbitrary. This can be understood by by writing the
dilatation of $F_j$ in the form
$$
\mu_{F_j}(z)=\phi(z)\left(\sum_{n\in \Z^2}g_{[n,2^{-j}]}(z,X_{n,j})\right),
$$
where $X_{n,j}\sim X,$ for each $n,j$, and only for each fixed $j$ the
random variables $X_{n,j}$, $n\in\Z^2$, are assumed to be independent. Thus
there can be arbitrary stochastic relations between the different
layers $(X_{n,j})_{n\in\Z^2}$ and  $(X_{n,j'})_{n\in\Z^2}$
for $j\not=j'.$ In particular, this possible dependence structure between different scales does not affect the deterministic limit function $F_\infty$,
which depends only on the triplet $(\phi,g,X).$  The main reason for this is that the failure probability in our main estimate (Theorem \ref{mainthm}) decays polynomially in $\delta$ (and hence exponentially in $j$ if $\delta = 2^{-j}$).

Let us also point out that for the sake of simplicity we leave out many considerations that would be  possible via the techniques of the present paper. For example, one may relax the speed of convergence to zero    in the  subsequence $B_{2^{-j}}$, and it is possible to consider quasiconformal maps between arbitrary domains. 
\end{Remark}

We also present a homogenization result for mappings of finite distortion, i.e.
we consider random dilatations $\mu_\delta$ with $\|\mu_\delta\|_{L^\infty(\R^2)}=1.$
An example of this kind of dilatation is given by
\begin{description}
\item[Model 4] A random function as in the model example \eqref{mu-ex2}
\begin{equation}\label{mu-ex4}
\begin{split}
\mu_\delta(z) &\coloneqq 1_{Q_0}(z) \sum_{n \in \Z^2} \varepsilon_n (1_{[0,1]^2})_{[n,\delta]}(z) \\
&=   1_{Q_0}(z) \sum_{n \in \Z^2} \varepsilon_n 1_{n\delta + [0,\delta]^2}(z),
\end{split}
\end{equation}
\end{description}
but now with random i.i.d. variables $\eps_n$ such that $|\eps_n| < 1$ and $(1-|\eps_n|)^{-1}$
has sufficiently fast exponential decay.
Theorems \ref{degeco:2.1}, \ref{th:last_deterministic} and \ref{th:last_random} in Section \ref{se:qchomogenization} generalize Theorem \ref{th:main}
to the degenerate case \eqref{mu-ex4} and beyond.
\medskip

It is tempting to try to prove almost sure convergence of $F_\delta$  in the above examples solely using  weak convergence of  $\mu_\delta$. However,
it is important to note that this is impossible, as the following example illustrates.
 Some deeper properties of $\mu_\delta$ and their interaction with singular integrals are involved here.

\begin{example}\label{ex1} Let $a\in (-1,1)$ and define the Beltrami coefficient  $\nu(z)$ that is $2$-periodic in the $x$-variable and constant in the $y$-variable by setting
$$
\nu (z)\coloneqq \begin{cases} a& \textrm{if}\; x\in [2n,2n+1), \quad n\in\Z,\\
-a& \textrm{if}\; x\in [2n+1,2n+2), \quad n\in\Z .
\end{cases}
$$
Write $b=\frac{1-a}{1+a}$ and observe that the function
$$
g(x+iy)\coloneqq \begin{cases} (x-2n)+n(1+b^2)+iby& \textrm{if}\; x\in [2n,2n+1), \quad n\in\Z,\\
b^2(x-(2n+1))+n(1+b^2)+1+iby& \textrm{if}\; x\in [2n+1,2n+2), \quad n\in\Z 
\end{cases}
$$
solves $g_{\overline{z}}=\nu g_z.$
Now consider the homogenized dilatation $\mu_j(z)\coloneqq \nu(2jz)$ for any integer $j\geq 1$ and let 
$F_j$ satisfy  $$\deeb F_j=\mu_j\, \dee F_j$$
with the three-point normalization, so that
$$
F_j(z)= \frac{g(2jz)}{j(1+b^2)}.
$$
As $j\to\infty,$ it is clear that $\mu_j$ converges locally weakly to
zero. However, by the above formulas we see that there is the uniform convergence $F_j\to F_\infty$, where 
$$
F_\infty(x+iy)=x+\frac{1-a^2}{1+a^2}iy,
$$
and $\mu_{F_\infty}\equiv a^2$ identically. By  considering the sequence $\widetilde \mu_j$, where $\widetilde \mu_{2j}=\mu_j$ and $\widetilde \mu_{2j+1}=0$ 
we obtain a locally weakly null sequence of  dilatations for which the homogenization limit does not exist. Finally, we observe that its is easy
to localize this observation and obtain the same phenomenon for compactly supported dilatations (see Lemma \ref{le:locality} below).
\end{example}

\begin{Remark}\label{rem:oleg}  In a  recent  interesting work  \cite{IM},  Ivrii and Markovic provide a more  elementary geometric proof of some special cases of our results on quasiconformal homogenization, also allowing for non-uniform ellipticity, and give an  application to random Delaunay triangulations.

\end{Remark}

\subsection{Further remarks on Theorem \ref{th:main22}}\label{ss:sing_int_homogenization}
   
As explained in Section \ref{ss:bird}, the application of  Theorem \ref{th:main22} to quasiconformal homogenization  and solutions to  Beltrami equation \eqref{eq:beltrami} comes via the {\it Beurling transform} 
\begin{equation}\label{eq:beurl} Tg(z) \coloneqq  -\frac{1}{\pi} \operatorname{p.v.} \int_\C \frac{g(w)}{(z-w)^2}\ dw.
\end{equation}
Namely, since  $T \circ \partial_{\overline z} = \partial_z$  on $W^{1,2}(\C)$, 
finding a solution to $\partial_{\overline z} f_\delta = \mu_\delta \partial_z f_\delta$, 
with the hydrodynamic normalization $f_\delta(z) - z = o(1)$ at $z \to \infty$, is equivalent to solving the integral equation 
$ (1 - \mu_\delta T) \deeb f_\delta = \mu_\delta$. One then finds the solution  via the $L^2$-Neumann series representation
$$
\deeb f_\delta = \mu_\delta + \mu_\delta T\mu_\delta + \mu_\delta T\mu_\delta T \mu_\delta +  \ldots,
$$
and the theorem allows us to deduce the weak convergence of each single summand in the above formula. 

The Beurling transform extends  to all of  $L^2(\C)$ as an isometric isomorphism.  Moreover, it commutes with dilatations and translations.  The class of singular integrals in $\R^d$ allowed in Theorem \ref{th:main22} shares these two basic symmetries:

\begin{definition}[Singular integral operator]\label{sio-def} A dilation and  translation invariant \emph{singular integral operator} is any bounded linear operator $T\colon L^2(\R^d) \to L^2(\R^d)$ of the form
$$ Tf(x) = p.v. \int_{\R^d} \frac{\Omega( (x-y)/|x-y| )}{|x-y|^d} f(y)\ dy, \qquad \mbox{for } \; f \in C^\infty_0(\R^d),$$
where $\Omega: S^{d-1} \to \C$ is smooth and has mean zero.
\end{definition}

The definition of the general class of random multipliers considered in Theorem \ref{th:main22}, \emph{the stochastic multifunctions}, is  slightly opaque as  it  employs the notion of \emph{stochastic tensor products}. Both these notions will be explained in Section  \ref{random-sec} below. However, to give a perhaps more intuitive idea of these notions, we describe here  in  detail a special class of multifunctions which fits well  with the notions of bump-fields and Beltrami envelope functions discussed in the previous Section \ref{ss:qc_homogenization}. Thus, working in arbitrary dimension $d\geq 1$, 
fix $m\geq 1$ and  consider for each index  $1\leq \ell \leq m$  
the random function 
\begin{equation}\label{eq:random multiplier}
\mu^{(\ell)}_\delta(x)=  \phi_\ell(x) \sum_{n \in \Z^d}
(g_\ell)_{[n,\delta]}(x,X_n),
\end{equation}
where we assume for each fixed $\ell$   that:
\begin{itemize}
\item\label{envelope} The `\emph{envelope function}'  $\phi_\ell$ does not depend on $\delta$. Moreover, $\phi_\ell\in L^p(\R^d)$ for every $p\in(1,\infty)$ with the H\"older bound
$$
\| \Delta_h \phi_\ell\|_{L^p(\R^d)}
\leq C(p)|h|^{\alpha_p},\quad\textrm{for}\;\; |h|\leq 1,\quad  \textrm{where}\;\; \alpha_p>0.
$$
\item The random variables $\{ X_{n}\}_{n\in\Z^d}$ are independent and identically distributed, $X_n\sim X$ for all $n\in\Z^d.$

\item The `\emph{bump function}' $g_\ell(\cdot , \cdot)$ satisfies an $d$-dimensional analogue of condition \eqref{mu-decay}.
\end{itemize}
Lemma \ref{prod3} below implies that such  $\mu^{(\ell)}_\delta$  is a stochastic multifunction, covered by Theorem \ref{th:main22}.

 As a  last  aspect,  Theorem \ref{th:main22} applies easily to  homogenization of many random differential operators:

\begin{example}\label{example:general} 
 For  each $\ell=1,\ldots, L$, let  $P_\ell(D)$ be a constant coefficient  second order differential operator  on $\R^d$. Also let $\mu_\delta^{(\ell)}$ be random multipliers as in Theorem \ref{th:main22}. For simplicity we assume that $d\geq 3$,  and that the $\mu_\delta^{(\ell)}$ are supported on a fixed ball $B\subset\R^d$.  Our  basic ellipticity assumption  is that they satisfy a.s.
\begin{equation}\label{eq:diff0}
\sum_{\ell=1}^La_\ell\|\mu_\delta^{(\ell)}\|_{L^\infty (B)}\leq k<1  \quad \textrm{for all}\;\;\delta \in (0,1),
\end{equation}
where the constants $a_j$ will be soon defined.
We consider the following PDE on $\R^d$ with random coefficient functions
\begin{equation}\label{eq:diff}
\Delta u_\delta + \sum_{\ell=1}^L\mu_\delta^{(\ell)}P_\ell(D) u_\delta =h.
\end{equation}
Here the right hand side $h\in L^2(\R^d)$ is fixed, and is also supported in the ball $B$. We normalize the solutions $u_j$  of \eqref{eq:diff} by demanding that  $u_j(x)\to 0$ as $x\to\infty$. 

We claim that this problem has a unique solution $u_\delta\in \dot W^{2,2}(\R^d)$ converging strongly (in probability)  in $\dot W^{s,2}(\R^d)$ for every $s<2$  towards a deterministic function $u_\infty\in \dot W^{2,2}(\R^d)$ as  $\delta\to 0.$ Thus  the present homogenization problem is solvable with a deterministic limit. 

In order to sketch the argument, let us  denote by $T_\ell$ the homogeneous Fourier multiplier $T_\ell\coloneqq  \Delta^{-1}P_\ell(D)$ and note that  it is a scaling and translation invariant singular integral\footnote{\label{fn:1} Strictly speaking the $T_\ell$ might not be precisely of the form in Definition \ref{sio-def} because there may be an identity component in addition to a principal value integral; however it is a routine matter to extend the analysis in this paper to this more general setting.} on $\R^d$. We choose $a_\ell\coloneqq \sup_{|\xi|=1}|P(\xi)|$  in condition \eqref{eq:diff0}, i.e., $a_\ell$ is the $L^2$-norm of the operator $T_\ell$.  Then $f_j\coloneqq \Delta u\in L^2(B)$  satisfies the equation
\begin{equation*}\label{eq:diff2}
 f_j + \sum_{\ell=1}^L\mu^{(\ell)}Tf_j =h,
\end{equation*}
which can be uniquely solved in $L^2(\R^d)$ by the Neumann series  and, via condition \eqref{eq:diff0},  we obtain an $L^2$-convergent series
\begin{equation}\label{eq:diff3}
 f_j=  h+\sum_{\substack{1\leq \ell_1,\ldots ,\ell_m\leq L\\m\geq 1}}(-1)^m\mu^{(\ell_1)}T_{\ell_1}\mu^{(\ell_2)}T_{\ell_2}\cdots \mu^{(\ell_m)}T_{\ell_m}h.
\end{equation}
By applying the fundamental solution of the Laplacian, we see that $u_j=c_d|\cdot|^{2-d}*f_j$ solves \eqref{eq:diff} with the right behaviour in the infinity,  as $d\geq 3$. Since any other solution has the same Laplacian, they must differ by a harmonic function that vanishes at infinity, and hence their difference is zero.

Theorem \ref{th:main22} applies to each term in the sum \eqref{eq:diff3}, and together with  the uniform convergence  in $\delta$ of the series in $L^2$ we deduce  that
$f_\delta \to f_\infty$ weakly in $L^2(\R^d)$, where $f_\infty$ is also supported in $B$. The rest of the claim follows from the standard properties of the fundamental solution $c_d|\cdot|^{2-d}$.

We finally note that above the operators $P_j$ may well have lower order terms since those  produce compact Fourier multipliers between functions on fixed compact subsets of $\R^d$. Hence the terms in the Neumann-series containing them can be taken care of by multiple application of  Theorem \ref{th:main22}. Actually, we could instead of differential operators $P_j$ with constant coefficients consider as well classical pseudodifferential operators of order 2 whose principal part is a homogeneous Fourier multiplier. 

 Similarly, the technique applies to
fractional Laplacians,  and in many other type of homogenization problems. In order to spell out one more specific example -- completely without details -- consider the homogenization of the general  conductivity equation in the plane.
$$
\nabla \cdot \big( A(x)\nabla u(x)\big)=0,
$$
where the $2\times 2$ matrix $A(x)=(\delta_{j,k}+\mu_{jk}(x))_{j,k=1,2}$ is measurable and uniformly elliptic, and each $\mu_{j,k}$ is a stochastic multifunction. One may reduce this to the study of the generalized Beltrami equation $$
\dee f+\eta_1\deeb f+\eta_2\overline{\deeb f},
$$ where the coefficients $\eta_j$'s are expressed in terms of  in the matrix coefficients $\mu_{j,k}$, see e.g. \cite[Theorem 16.1.6]{AIM}. The structure of the $\eta_j$'s allows them be approximated in a suitable sense by multifunctions (see footnote \ref{fn:1} in this connection). The generalized Beltami-equation   may be solved by a  $2\times 2$-matrix valued  Neumann series, and the analysis can be then carried out analogously to  the case of the classical Beltrami equation. 

For  classical treatments of homogenization of the  above PDE's (however, not including the case of more general case of  Fourier multipliers we allow for), we refer to \cite{PV1}, \cite{PV2}, and \cite{AS}.
 
\end{example}

\subsection{Structure of the paper}
Section \ref{dmd-sec} develops the  homogenization of deterministic iterated singular integrals.
This is much easier than the random setting, but has its own  interest, and it will provide a handy tool in treating the stochastic case later on.  The admissible class of deterministic multipliers will be called  called \emph{multiscale functions} (see Definition \ref{ms}). They are defined  using the notion of  `multiscale tensor product'  (Definition \ref{mtp}), which generalizes  the product of an envelope function and a bump field. Our deterministic homogenization result is stated as Corollary \ref{singular_equivalence}.

Section \ref{random-sec} first defines the probabilistic analogues of the deterministic notions, especially the `stochastic multiscale tensor product' (Definition \ref{smtp}) is used to define \emph{stochastic multiscale functions} (Definition \ref{sms}), which are quite a bit more general than the multipliers  we discussed in Section \ref{ss:qc_homogenization}.
The general form of our main result on homogenization of randomized iterated singular integrals is formulated in Theorem \ref{mainthm} and Corollary \ref{co:main}. Lemma \ref{prod3} then verifies that the random multipliers \eqref{eq:random multiplier}  are particular instances of a  stochastic multiscale tensor product. 

The proof of Theorem \ref{th:main22} is carried out in Section \ref{sec:proof}, where it is obtained as a consequence of Theorem \ref{mainthm} and Corollary \ref{co:main}.
  Somewhat surprisingly, a considerable effort needs to be spent in establishing the convergence of the expectation of the iterated randomized integrals.  

Finally, Section \ref{se:qchomogenization} applies Theorem \ref{th:main22} to quasiconformal homogenization. There we   combine Theorem \ref{th:main22} with methods from the theory of planar quasiconformal mappings in order to show that the corresponding random solutions $F_\delta$  almost surely have a unique normalised deterministic quasiconformal limit $F_\infty$, see e.g., Theorem \ref{th:main}.

\subsection{Acknowledgements}

The authors thank Dima Shlyakhtenko for helpful discussion, and in particular for conveying the intuition (from free probability) that the contribution from crossing partitions is negligible.

The first author was supported was supported by Academy of Finland projects
1134757, 1307331 and 13316965, and ERC project 834728.
The second author was supported by NSF grants DMS-1068105 and DMS-1700069.
The  third author was supported by the Finnish Academy, grants 75166001, 1309940, and the Finnish Academy COE 'Analysis and Dynamics'.
The fourth author was supported by a grant from the MacArthur Foundation, NSF grants DMS-0649473, DMS-1266164, DMS-1764034, and a Simons Investigator Award.

\section{Deterministic multiscale functions}\label{dmd-sec}

In the sequel we  use extensively the notations $X \lesssim Y$ or $X = O(Y)$ to denote the estimate $|X| \leq CY$ where $C$ is an absolute constant.  If we need the constant $C$ to depend on some parameters, we shall indicate this by subscripts, or else indicate the dependence in the text. For instance,  $X \lesssim_p Y$ or $X = O_p(Y)$ means that $|X| \leq C_p Y$ for some constant $C$ depending on $p$. 

Our arguments in the following three sections are not specific to the Beurling transform in the plane, and so we shall work in the more general context of singular integral operators in a Euclidean space.  Accordingly, we fix a dimension $d \geq 1$; in the application to the Beltrami equation, we will have $d=2$.  We shall work with the standard Euclidean space $\R^d$, the standard lattice $\Z^d$, and the standard torus $\T^d = \R^d/\Z^d$.  We also have a scale parameter $0 < \delta < 1$, which we shall think of as being small; several of our functions shall depend on this parameter, and we shall indicate this by including $\delta$ as a subscript.

Before we can state the main result, it will be convenient to introduce a certain calculus regarding various classes of functions (namely, envelope functions, localized functions, negligible functions, and multiscale functions; we will define these classes later in this section).  To set up this calculus we shall need a certain amount of notation and basic theory.

\begin{definition}[H\"older space]  If $1 < p < \infty$ and $\alpha \in (0,1)$, we let $\Lambda^{\alpha,p}(\R^d)$ denote the space of functions $f$ whose norm
$$ \| f \|_{\Lambda^{\alpha,p}(\R^d)} \coloneqq  \|f\|_{L^p(\R^d)} + \sup_{0 < |h| < 1} \frac{\|\Delta_h f\|_{L^p(\R^d)}}{|h|^\alpha}$$
is finite, where $\Delta_h$ was defined in \eqref{difference-def}.
\end{definition}

\begin{remark}  One could also use Sobolev spaces $W^{\alpha,p}(\R^d)$ instead of H\"older spaces $\Lambda^{\alpha,p}(\R^d)$ in what follows, but we have elected to use H\"older spaces as they are slightly more elementary. Also note that
we usually use the symbol $\phi$ for a Beltrami envelope function, c.f. Definition \ref{de:ref}. 
\end{remark}

We recall the following definition from p. \pageref{envelope}.

\begin{definition}[Envelope function]\label{def:env} An \emph{envelope function} is a function $f: \R^d \to \C$ (not depending on the scale parameter $\delta$) such that for every $1 < p < \infty$ there exists $\alpha > 0$ such that $f \in \Lambda^{\alpha, p}(\R^d)$.  Thus the space of all envelope functions is
$$ \bigcap_{1 < p < \infty} \bigcup_{\alpha \in(0,1)} \Lambda^{\alpha, p}(\R^d).$$
\end{definition}

\begin{example} If $Q$ is a cube in $\R^d$, then one checks that
$1_Q\in \Lambda^{\alpha, p}(\R^d)$ for $\alpha\in (0,1/p)$ so that
the indicator function $1_Q$ is an envelope function.  Any function in the Schwartz class is an envelope function.
\end{example}

\begin{lemma}\label{envprod}  The product of two envelope functions is again an envelope function.
\end{lemma}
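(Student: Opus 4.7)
The plan is to unwind the definition of envelope function on both sides: fix an arbitrary $p\in(1,\infty)$, and produce some $\alpha>0$ (allowed to depend on $p$) with $fg\in\Lambda^{\alpha,p}(\R^d)$. The natural move is to apply the envelope hypothesis not at $p$ but at the doubled exponent $2p$. Since $f$ and $g$ are both envelope functions, there exist $\alpha_f,\alpha_g\in(0,1)$ such that $f\in\Lambda^{\alpha_f,2p}(\R^d)$ and $g\in\Lambda^{\alpha_g,2p}(\R^d)$; in particular both $f$ and $g$ belong to $L^{2p}(\R^d)$.

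The $L^p$-bound on the product is then immediate from H\"older's inequality with the split $\tfrac{1}{p}=\tfrac{1}{2p}+\tfrac{1}{2p}$:
\[
\|fg\|_{L^p(\R^d)}\le \|f\|_{L^{2p}(\R^d)}\,\|g\|_{L^{2p}(\R^d)}<\infty.
\]
For the H\"older-difference seminorm, I would use the elementary product identity
\[
\Delta_h(fg)(x)=(\Delta_h f)(x)\,g(x+h)+f(x)\,(\Delta_h g)(x),
\]
and apply the same H\"older split to each summand, invoking translation invariance $\|g(\cdot+h)\|_{L^{2p}}=\|g\|_{L^{2p}}$. This yields
\[
\|\Delta_h(fg)\|_{L^p}\le \|\Delta_h f\|_{L^{2p}}\|g\|_{L^{2p}}+\|f\|_{L^{2p}}\|\Delta_h g\|_{L^{2p}}\lesssim |h|^{\alpha_f}+|h|^{\alpha_g}
\]
for $|h|<1$. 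Choosing $\alpha\coloneqq\min(\alpha_f,\alpha_g)>0$ therefore gives $fg\in\Lambda^{\alpha,p}(\R^d)$, and since $p\in(1,\infty)$ was arbitrary this is precisely the envelope-function property.

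There is no real obstacle in this argument; the only conceptual point is that the regularity exponent $\alpha$ produced for $fg$ at exponent $p$ is governed by the exponents available for $f,g$ at the \emph{larger} exponent $2p$, which is exactly why the definition of envelope function quantifies over all $p$ and allows $\alpha$ to depend on $p$ rather than fixing a single H\"older exponent. Everything else is a routine application of H\"older's inequality and translation invariance of $L^{2p}$.
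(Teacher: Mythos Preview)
Your proof is correct and follows essentially the same approach as the paper: pass to the doubled exponent $2p$, invoke the envelope hypothesis there, and use H\"older's inequality together with the Leibniz-type identity for $\Delta_h(fg)$ to land in $\Lambda^{\alpha,p}$. The paper compresses this into the single sentence ``the product of two functions in $\Lambda^{\alpha,p}(\R^d)$ lies in $\Lambda^{\alpha,p/2}(\R^d)$'', but your more explicit version is exactly what underlies that claim.
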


\begin{proof}  From H\"older's inequality one quickly sees that the product of two functions in $\Lambda^{\alpha, p}(\R^d)$ lies in $\Lambda^{\alpha, p/2}(\R^d)$.  The claim follows.
\end{proof}

\begin{definition}[Localized function]  A (deterministic) \emph{localized function} is a function $g: \R^d \to \C$ (not depending on the scale parameter $\delta$) such that for every $1 < p < \infty$ and $N > 0$, the function $\langle \cdot \rangle^N g$ lies in $L^p(\R^d)$, where $\langle \cdot \rangle$ is as in  Definition \ref{Rescale}.  Thus the space of all localized functions is
$$ 
\bigcap_{1 < p < \infty} \bigcap_{N > 0}\langle \cdot \rangle^{-N}L^p(\R^d),
$$
\end{definition}

\begin{example} The indicator function $1_Q$ of a cube is a localized function, as is any function in the Schwartz class.
\end{example}

We shall often exploit the ability of localized functions to absorb arbitrary powers of $\langle \cdot \rangle_{[n,\delta]}$ via the following lemma, which improves upon the triangle inequality in $L^p$ at the cost of inserting different localizing weights $\langle \cdot \rangle_{[n,\delta]}$ on each summand.

\begin{lemma}[Localization lemma]\label{loc}  Let $\delta > 0$ and  let $1 < p < \infty$.  Then we have the estimate
$$ \| \sum_{n \in\Z^d} f_n \|_{L^p(\R^d)} \lesssim_{p,d} (\sum_{n \in \Z^d} \| \langle \cdot \rangle_{[n,\delta]}^d f_n \|_{L^p(\R^d)}^p)^{1/p}$$
for any sequence $f_n \in L^p(\R^d)$ of functions.
\end{lemma}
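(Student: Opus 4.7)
The plan is to reduce to a pointwise estimate by splitting the weights and applying H\"older's inequality in the index $n$, then integrate. The key identity is the trivial factorization
\[
f_n(x) = \langle x \rangle_{[n,\delta]}^{-d} \cdot \bigl(\langle x \rangle_{[n,\delta]}^{d} f_n(x)\bigr),
\]
which lets us absorb the unit weight into a product of a summable-in-$n$ decaying weight and the quantity that appears on the right-hand side of the target inequality.

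First I would fix $x \in \R^d$, write $y = x/\delta$, and apply H\"older's inequality (in the counting measure on $n \in \Z^d$) with dual exponents $p'$ and $p$ to obtain the pointwise bound
\[
\Bigl|\sum_{n \in \Z^d} f_n(x)\Bigr| \leq \Bigl(\sum_{n \in \Z^d} \langle x \rangle_{[n,\delta]}^{-dp'}\Bigr)^{1/p'} \Bigl(\sum_{n \in \Z^d} \langle x \rangle_{[n,\delta]}^{dp}\, |f_n(x)|^p\Bigr)^{1/p}.
\]
Next I would verify that the first factor is bounded by a constant depending only on $d$ and $p$, uniformly in $x$ and in $\delta$. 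Indeed, $\langle x \rangle_{[n,\delta]}^{-dp'} = (1 + |y - n|^2)^{-dp'/2}$, and since $p' > 1$ (because $1 < p < \infty$) the series $\sum_{n \in \Z^d}(1+|y-n|^2)^{-dp'/2}$ converges, uniformly in $y \in \R^d$, by comparison with an integral (or by splitting over the unit cubes $n + [0,1]^d$).

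Denoting this uniform bound by $C_{d,p}$, raising the pointwise inequality to the $p$-th power and integrating over $x \in \R^d$ yields
\[
\Bigl\|\sum_{n \in \Z^d} f_n\Bigr\|_{L^p(\R^d)}^p \leq C_{d,p}^{p/p'} \sum_{n \in \Z^d} \int_{\R^d} \langle x \rangle_{[n,\delta]}^{dp}\, |f_n(x)|^p\,dx = C_{d,p}^{p/p'} \sum_{n \in \Z^d} \|\langle \cdot \rangle_{[n,\delta]}^d f_n\|_{L^p(\R^d)}^p,
\]
by Fubini/Tonelli, and taking $p$-th roots gives exactly the claimed estimate.

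I do not anticipate any real obstacle here: the entire argument is a weighted H\"older estimate, and the only technical point is the elementary uniform convergence of $\sum_{n \in \Z^d}\langle y - n \rangle^{-dp'}$, which holds precisely because the exponent $dp' > d$. Note that the choice of the exponent $d$ (the dimension) on $\langle \cdot \rangle_{[n,\delta]}$ in the statement is used only to make this borderline convergence work for every $p \in (1,\infty)$; any strictly larger exponent would work equally well, but the exponent $d$ is convenient and is what will be used later in the paper.
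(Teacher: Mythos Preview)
Your proof is correct and is essentially identical to the paper's own argument: reduce to a pointwise H\"older estimate in the index $n$, then verify the uniform bound $\sum_{n\in\Z^d}\langle\cdot\rangle_{[n,\delta]}^{-dp'}\lesssim_{p,d}1$. The only cosmetic difference is that the paper first rescales to $\delta=1$, whereas you carry $\delta$ through and set $y=x/\delta$ when checking the sum; the content is the same.
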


\begin{proof}  We can rescale $\delta=1$.  It will suffice to prove the pointwise inequality
$$ |\sum_{n \in \Z^d} f_n| \lesssim_{p,d} (\sum_{n \in \Z^d} \langle \cdot \rangle_{[n,1]}^{pd} |f_n|^p )^{1/p}.$$
By H\"older's inequality,  it is enough to  show that  pointwise
$$ (\sum_{n \in \Z^d} \langle \cdot \rangle_{[n,1]}^{-p' d})^{1/p'} \lesssim_{p,d} 1$$
where $p' = p/(p-1)$ is the dual exponent of $p$.  But this can be established by direct calculation.
\end{proof}

Let us record couple of   elementary  properties of localized functions.

\begin{lemma}\label{localized} \text{}
\begin{itemize}
\item[(i)] The product of two localized functions is a localized function.
\item[(ii)] For any localized function $g$ and any sequence $(a_n)$ it holds that 
$$
\|\sum_{n\in\Z^d}a_n g_{[n,\delta]}\|_{L^p(\R^d)}\lesssim_{p,g}\delta^{d/p} \|(a_n)\|_{\ell^p},\quad 1<p<\infty
$$
where $g_{[n,\delta]}$ is given by Definition \ref{Rescale}.
\end{itemize}
\end{lemma}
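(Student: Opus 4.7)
For part (i), the strategy is the routine observation that the definition of a localized function is closed under Hölder: to verify that $g_1g_2$ is localized, I need $\langle\cdot\rangle^N g_1 g_2\in L^p(\R^d)$ for every $N>0$ and every $1<p<\infty$. Writing $\langle\cdot\rangle^N g_1 g_2 = (\langle\cdot\rangle^N g_1)\cdot g_2$ and applying Hölder's inequality with exponents $(2p,2p)$, the bound reduces to the facts that $\langle\cdot\rangle^N g_1\in L^{2p}$ and $g_2\in L^{2p}$, both of which are immediate from the localization of $g_1$ and $g_2$. There is no real obstacle here.

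For part (ii), the plan is to combine the Localization lemma (Lemma \ref{loc}) with a simple rescaling computation. Applying Lemma \ref{loc} to $f_n \coloneqq a_n g_{[n,\delta]}$ yields
$$\Bigl\|\sum_{n\in\Z^d} a_n g_{[n,\delta]}\Bigr\|_{L^p(\R^d)} \lesssim_{p,d} \Bigl(\sum_{n\in\Z^d} |a_n|^p\, \|\langle\cdot\rangle_{[n,\delta]}^d\, g_{[n,\delta]}\|_{L^p(\R^d)}^p\Bigr)^{1/p}.$$
Since $g$ is a localized function, the weighted function $\langle\cdot\rangle^d g$ lies in $L^p(\R^d)$. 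A change of variables $y = x/\delta - n$ (so $dx = \delta^d\,dy$) turns the norm on the right into
$$\|\langle\cdot\rangle_{[n,\delta]}^d\, g_{[n,\delta]}\|_{L^p(\R^d)} = \delta^{d/p}\, \|\langle\cdot\rangle^d g\|_{L^p(\R^d)},$$
a quantity independent of $n$. Substituting back gives the desired bound with implicit constant depending only on $p$, $d$, and $\|\langle\cdot\rangle^d g\|_{L^p(\R^d)}$, i.e.\ on $g$ as claimed.

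Neither step presents genuine difficulty: part (i) is an application of Hölder and part (ii) is essentially just an invocation of the previously established Localization lemma together with the correct scaling of $L^p$ norms under dilation. The only point worth flagging is that one must feed the full power $\langle\cdot\rangle^d$ (not merely $\langle\cdot\rangle^N$ for arbitrary $N$) into the Localization lemma in order to exactly match the weight appearing there; this is painless because localized functions absorb any polynomial weight by definition.
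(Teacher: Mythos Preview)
Your proof is correct and follows exactly the approach the paper takes: the paper's own proof simply states that (i) follows from H\"older's inequality and (ii) is an immediate consequence of Lemma~\ref{loc}, and you have filled in precisely those details.
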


\begin{proof} Claim (i) follows from H\"older's inequality, and claim (ii) is an immediate consequence of Lemma \ref{loc}.
\end{proof}

\begin{definition}[Discretization]  Let $f$ be an envelope function, and let $0 < \delta < 1$.  We define the \emph{discretization} $[f]_\delta\colon \Z^d \to \C$ of $f$ at scale $\delta$ to be the function
$$[f]_\delta(n) \coloneqq  \frac{1}{\delta^d} \int_{n\delta + [0,\delta]^d} f,$$
thus $[f]_\delta(n)$  is the average value of $f$ on the cube $n\delta + [0,\delta]^d$.
\end{definition}

\begin{definition}[Multiscale tensor product]\label{mtp}  Let $f$ be an envelope function and $g$ be a localized function.  We define the \emph{multiscale tensor product} $f \otimes_\delta g \colon \R^d \to \C$ of $f$ and $g$ to be the function
$$ f \otimes_\delta g \coloneqq  \sum_{n \in \Z^d} [f]_\delta(n) g_{[n,\delta]}$$
where $g_{[n,\delta]}$ is defined by Definition \ref{Rescale}.
\end{definition}

\begin{definition}[Negligible function]  A function $F = F_\delta \colon \R^d \to \C$ depending on the parameter $0 < \delta < 1$ is said to be \emph{negligible} if for every $1 < p < \infty$ there exists $\eps_p > 0$ and $C_p > 0$ such that 
$$ \|F_\delta \|_{L^p(\R^d)} \leq C_p \delta^{\eps_p}$$
for all $0 < \delta < 1$.
\end{definition}

\begin{remark}  Note in particular that if $F$ is negligible, then $F_\delta$ converges to zero in $L^p$ norm as $\delta \to 0$ for every $1 < p < \infty$, and furthermore the same is true even if one multiplies $F_\delta$ by an arbitrary power of $(\log \frac{1}{\delta})$.  This freedom to absorb logarithmic factors in $\delta$ will be useful for technical reasons later in this paper.
\end{remark}

\begin{definition}[Multiscale function]\label{ms}  A function $F = F_\delta \colon \R^d \to \C$ depending on the parameter $0 < \delta < 1$ is said to be a (deterministic) \emph{multiscale function} if it has an expansion
$$ F_\delta = \sum_{j=1}^J f_j \otimes_\delta g_j + G_\delta$$
where $J \geq 1$ is an integer, $f_1,\ldots,f_J$ are envelope functions, $g_1,\ldots,g_J$ are localized functions, and $G_\delta$ is a negligible function.  If  $F_\delta$ and $\widetilde F_\delta$ are two multiscale functions such that the difference $F_\delta-\widetilde F_\delta$
is negligible, we say that $F_\delta$ and $\widetilde F_\delta$
are \emph{equivalent}.
\end{definition}

\begin{example}  If $Q$ is a cube, and $g$ is a localized function, then the function
$$ F_\delta(x) \coloneqq  \sum_{n \in \Z^d} 1_Q(n \delta) g_{[n,\delta]}(x)$$
can be easily verified to be a multiscale function. 
To this end we use Lemma \ref{localized}(ii) to estimate
\begin{align*}
\|F_\delta - 1_Q \otimes_\delta g \|_{L^p(\R^d)}\;\;
&\leq \;\;\left\|\sum_{n\textrm{ : } d(n\delta,\partial Q)\leq 2\sqrt{d}\delta} |g_{[n,\delta]}|\right\|_{L^p(\R^d)} 
\lesssim \big(\delta^{(1-d)}\big)^{1/p}\delta^{d/p}=\delta^{1/p} .
\end{align*}
Hence the difference $F_\delta - 1_Q \otimes_\delta g$ is  negligible.   A similar statement is true  if $1_Q$ is replaced by a Schwartz function. 
\end{example}

\begin{example} The function $\mu_\delta$ defined in \eqref{mu-ex1} is a multiscale function.  Indeed, $\mu_\delta$ is equivalent to $\varphi \otimes_\delta a .$
More generally, we will prove in Lemma \ref{prod3} below that if  $g$  is bounded and quickly decaying,
$$
|g(x)|\leq C_N \langle x \rangle^{-N}\quad \textrm{for all}\;\; N\geq 1\;\;\textrm{and}\;\; x\in\R^d,
$$
then for any envelope function $f$ the stochastic multiscale tensor product $f\otimes_\delta g$ is equivalent to the function $\displaystyle f\sum_{n\in \Z^d}g_{[n,\delta]}.$
\end{example}

We continue with basic  discretization estimates for envelope functions, encoded in the following two lemmas.

\begin{lemma}\label{discret} Let $f$ be an envelope function.  
\begin{itemize}
\item[(i)] One has
 $\|[f]_\delta \|_{\ell^p(\Z^d)} \lesssim_{p,d} \|f\|_{L^p(\R^d)}\delta^{-d/p}$
for all $0 < \delta < 1$ and $1 < p < \infty$. 
\item[(ii)] For any $r \in \Z^d$ we have
$$ \|\Delta_r [f]_\delta \|_{\ell^p(\Z^d)} \lesssim_{p,f,d} (r \delta)^{\eps_p} \delta^{-d/p}$$
for some $\eps_p > 0$ independent of $\delta$ or $r$.
\end{itemize}
\end{lemma}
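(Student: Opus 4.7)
For part (i), the plan is to run the standard Jensen/Hölder argument on each averaging cube and then exploit the fact that the cubes $n\delta+[0,\delta]^d$ tile $\R^d$. Explicitly, Jensen's inequality gives
$$ |[f]_\delta(n)|^p \leq \frac{1}{\delta^d}\int_{n\delta+[0,\delta]^d}|f|^p,$$
so summing over $n\in\Z^d$ produces $\sum_n|[f]_\delta(n)|^p\leq \delta^{-d}\|f\|_{L^p(\R^d)}^p$, which is exactly the desired bound after taking $p$-th roots. Nothing in this step uses that $f$ is an envelope function beyond belonging to $L^p$.

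For part (ii), the key algebraic observation I will exploit is that discretization commutes with translations when the translation is taken at the matching scale. Since the cube $(n+r)\delta+[0,\delta]^d$ is obtained from $n\delta+[0,\delta]^d$ by translation by $r\delta$, a change of variables shows
$$ \Delta_r[f]_\delta(n) = \frac{1}{\delta^d}\int_{n\delta+[0,\delta]^d}\bigl(f(y+r\delta)-f(y)\bigr)\,dy = [\Delta_{r\delta}f]_\delta(n).$$
Applying part (i) with $f$ replaced by $\Delta_{r\delta}f$ then reduces the estimate to
$$ \|\Delta_r[f]_\delta\|_{\ell^p(\Z^d)}\lesssim_{p,d}\delta^{-d/p}\|\Delta_{r\delta}f\|_{L^p(\R^d)}.$$

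To finish, I split into two regimes. When $|r\delta|<1$, the envelope assumption gives $f\in\Lambda^{\alpha,p}(\R^d)$ for some $\alpha=\alpha(p,f)>0$, so by the very definition of the H\"older norm $\|\Delta_{r\delta}f\|_{L^p(\R^d)}\lesssim_{p,f}|r\delta|^{\alpha}$, yielding the required bound with $\varepsilon_p=\alpha$. When $|r\delta|\geq 1$, one simply bounds by the triangle inequality $\|\Delta_r[f]_\delta\|_{\ell^p}\leq 2\|[f]_\delta\|_{\ell^p}\lesssim_{p,d}\|f\|_{L^p}\delta^{-d/p}$ and notes that $1\leq (r\delta)^{\varepsilon_p}$ in this range, so the bound holds uniformly in $r$ with the same exponent. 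There is no real obstacle here; the only mild subtlety is keeping the H\"older exponent $\varepsilon_p$ independent of $r$ and $\delta$, which is automatic because the envelope property provides $\alpha=\alpha(p,f)$ before any discretization is chosen.
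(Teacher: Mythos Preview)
Your proof is correct and follows essentially the same approach as the paper: Jensen/H\"older on each cube for (i), and for (ii) the commutation identity $\Delta_r[f]_\delta=[\Delta_{r\delta}f]_\delta$ followed by (i) and the H\"older regularity of $f$. Your explicit case split between $|r\delta|<1$ and $|r\delta|\geq 1$ is a welcome clarification, since the $\Lambda^{\alpha,p}$ norm only controls increments with $|h|<1$; the paper's proof leaves this implicit.
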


\begin{proof}  From H\"older's inequality followed by Fubini's theorem we have
$$ \| [f]_\delta \|_{\ell^p(\Z^d)}\leq
\left\| \left(\frac{1}{\delta^d} \int_{n\delta+[0,\delta]^d} |f|^p\right)_{n\in \Z^d}^{1/p} \right\|_{\ell^p(\Z^d)} = \delta^{-d/p} \|f\|_{L^p(\R^d)}$$
and (i) follows since $f \in L^p(\R^d)$.  
For (ii), observe that 
$$
\Delta_r [f]_\delta =[\Delta_{\delta r} f]_\delta.
$$
The claim now follows from (i) as $f \in \Lambda^{\eps_p,p}(\R^d)$ for some $\eps_p > 0$.
\end{proof}

Also, discretization and multiplication almost commute:

\begin{lemma}\label{commutator}  Let $f, F$ be envelope functions.  Then for every $1 < p < \infty$ there exists $\eps_p > 0$ such that $\| [fF]_\delta - [f]_\delta [F]_\delta \|_{\ell^p(\Z^d)} \lesssim_{p,f,F,d} \delta^{-d/p + \eps_p}$.
\end{lemma}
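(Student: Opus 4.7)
The plan is to rewrite the commutator as an honest fluctuation integral and then exploit the Hölder-in-$L^p$ regularity of envelope functions. Write $Q_n := n\delta + [0,\delta]^d$, $f_n := [f]_\delta(n)$, $F_n := [F]_\delta(n)$, and observe the algebraic identity
$$
[fF]_\delta(n) - f_n F_n \;=\; \frac{1}{\delta^d}\int_{Q_n} \bigl(f(x)-f_n\bigr)\bigl(F(x)-F_n\bigr)\,dx,
$$
which is verified by expanding the product on the right and using $\frac{1}{\delta^d}\int_{Q_n} f = f_n$, and similarly for $F$.

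Next, for any envelope function $h$ I would introduce the \emph{fluctuation} $\Phi_\delta h(x) := h(x)-[h]_\delta(n(x))$, where $n(x)$ is the unique lattice point with $x \in Q_{n(x)}$, and prove the key oscillation estimate
$$
\|\Phi_\delta h\|_{L^p(\R^d)} \;\lesssim_{p,h,d}\; \delta^{\alpha_p}
$$
for some $\alpha_p>0$ (depending on the Hölder exponent of $h$ in $L^p$). This follows from $|h(x)-[h]_\delta(n)|\le \frac{1}{\delta^d}\int_{Q_n}|h(x)-h(y)|\,dy$, Jensen's inequality, the substitution $y=x+h'$ (with $h'\in Q_n-x \subset[-\delta,\delta]^d$), Fubini, and the envelope bound $\|\Delta_{h'} h\|_{L^p(\R^d)}\lesssim |h'|^{\alpha_p}$ for $|h'|\le 1$. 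The elementary computation $\int_{[-\delta,\delta]^d}|h'|^{p\alpha_p}\,dh' \lesssim \delta^{d+p\alpha_p}$ then produces $\|\Phi_\delta h\|_{L^p}^p \lesssim \delta^{-d}\cdot\delta^{d+p\alpha_p} = \delta^{p\alpha_p}$, as needed.

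With these two ingredients, the identity above reads $[fF]_\delta(n) - f_nF_n = \frac{1}{\delta^d}\int_{Q_n} (\Phi_\delta f)(\Phi_\delta F)\,dx$. Jensen's inequality on the cube-average gives
$$
\bigl|[fF]_\delta(n) - f_nF_n\bigr|^p \;\le\; \frac{1}{\delta^d}\int_{Q_n}\bigl|(\Phi_\delta f)(\Phi_\delta F)\bigr|^p\,dx,
$$
and summing in $n$ yields $\|[fF]_\delta-[f]_\delta[F]_\delta\|_{\ell^p(\Z^d)}^p \le \delta^{-d}\,\|\Phi_\delta f\cdot \Phi_\delta F\|_{L^p(\R^d)}^p$. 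Applying Hölder's inequality with exponents $(2p,2p)$ and the oscillation estimate at exponent $2p$ (applied to both $f$ and $F$, which is legal since $f,F$ are envelope functions) gives $\|\Phi_\delta f\cdot\Phi_\delta F\|_{L^p}\lesssim \delta^{\alpha_{2p}+\beta_{2p}}$. Taking $p$-th roots produces the claimed bound with $\eps_p = \alpha_{2p}+\beta_{2p}>0$.

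The only potential subtlety is checking that the Hölder regularity really does survive through Jensen's inequality at exponent $p$ (which forces us to invoke the envelope hypothesis at exponent $2p$ in the final Hölder step); this is fine because the envelope class is defined for \emph{all} $p\in(1,\infty)$. No truly hard step arises.
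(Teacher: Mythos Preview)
Your proof is correct. It differs from the paper's argument in one structural choice: the paper uses the \emph{asymmetric} identity
\[
[fF]_\delta(n)-[f]_\delta(n)[F]_\delta(n)=\frac{1}{\delta^{2d}}\int_{Q_n}\int_{Q_n} f(x)\bigl(F(x)-F(y)\bigr)\,dx\,dy,
\]
then applies Minkowski in the shift variable $r=y-x$ and H\"older to $\|f\,\Delta_r F\|_{L^p}\le\|f\|_{L^{2p}}\|\Delta_r F\|_{L^{2p}}$, so only the H\"older regularity of $F$ (and mere $L^{2p}$-integrability of $f$) is used, producing $\eps_p=\eps_{2p}(F)$. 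Your symmetric ``covariance'' identity $(f-f_n)(F-F_n)$ instead invokes the H\"older regularity of \emph{both} factors and yields the slightly larger exponent $\eps_p=\alpha_{2p}(f)+\beta_{2p}(F)$. The fluctuation operator $\Phi_\delta$ you introduce is exactly what the paper establishes in passing in the proof of Lemma~\ref{weak} (the model case $g=1_{[0,1]^d}$), so the oscillation estimate $\|\Phi_\delta h\|_{L^p}\lesssim\delta^{\alpha_p}$ is consistent with the paper's toolkit. Both routes are short and elementary; yours is a touch more symmetric and gives a marginally better constant, while the paper's needs one fewer regularity hypothesis in principle.
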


\begin{proof}  From Fubini's theorem we have
$$ [fF]_\delta(n) - [f]_\delta(n) [F]_\delta(n) = \frac{1}{\delta^{2d}} \int_{\delta n+[0,\delta]^d} \int_{\delta n+[0,\delta]^d}
f(x)(F(x) - F(y))\ dx dy.$$
Writing $y = x + r$ we can thus estimate
$$ |[fF]_\delta(n) - [f]_\delta(n) [F]_\delta(n)| \leq \frac{1}{\delta^{2d}} \int_{[-\delta,\delta]^d} \int_{\delta n+[0,\delta]^d}| f(x) \Delta_r F(x)|\ dx dr,$$
and hence by Minkowski's inequality
$$\| [fF]_\delta - [f]_\delta [F]_\delta \|_{\ell^p(\Z^d)} \leq
\frac{1}{\delta^{d}} \int_{[-\delta,\delta]^d}
\left\| \frac{1}{\delta^{d}} \int_{\delta n+[0,\delta]^d} |f(x) \Delta_r F(x)|\ dx \right\|_{\ell^p_n(\Z^d)}\ dr.$$
Lemma \ref{discret}(i) allows us to  conclude
$$\| [fF]_\delta - [f]_\delta [F]_\delta \|_{\ell^p(\Z^d)} \leq
\frac{\delta^{-d/p}}{\delta^{d}} \int_{[-\delta,\delta]^d}
\| f \Delta_r F \|_{L^p(\R^d)}\ dr,$$
and finally by H\"older's inequality and the fact that $f \in L^{2p}(\R^d)$ and $F \in \Lambda^{\eps_{2p}, 2p}(\R^d)$ for some $\eps_{2p} > 0$ we see that for $r\in [0,\delta]^d$
$$ \| f \Delta_r F\|_{L^p(\R^d)} \lesssim_{p,f,F} \delta^{\eps_{2p}}$$
and the claim follows.
\end{proof}

Next we consider the basic properties  of multiscale functions. 
For this purpose we need a couple of useful lemmas. 

\begin{lemma}\label{Lp-multiscale} Assume that $f$ is an envelope function and $g$  a localized function. Then for any $\delta >0$
\begin{equation}\label{Lp-multiestimate}
\| f\otimes_\delta g \|_{L^p(\R^d)}\lesssim_{p,d}\|f\|_{L^p(\R^d)}
\|\langle \cdot \rangle^d g\|_{L^p(\R^d)}.
\end{equation}
\end{lemma}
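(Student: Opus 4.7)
The plan is to combine the Localization Lemma (Lemma \ref{loc}) with the discretization bound (Lemma \ref{discret}(i)) and a change of variables. First I would expand the multiscale tensor product by definition and apply Lemma \ref{loc} to the sum $\sum_{n \in \Z^d} [f]_\delta(n) g_{[n,\delta]}$, giving
$$\|f\otimes_\delta g\|_{L^p(\R^d)} \lesssim_{p,d} \Bigl(\sum_{n\in\Z^d} |[f]_\delta(n)|^p \,\|\langle \cdot\rangle_{[n,\delta]}^d\, g_{[n,\delta]}\|_{L^p(\R^d)}^p\Bigr)^{1/p},$$
since $[f]_\delta(n)$ is just a scalar that factors out of each $L^p$ norm.

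The key observation is that the weighted $L^p$ norm of $g_{[n,\delta]}$ scales cleanly: writing out $\langle x\rangle_{[n,\delta]}^d g_{[n,\delta]}(x) = \langle x/\delta - n\rangle^d g(x/\delta - n)$ and substituting $y = x/\delta - n$ (Jacobian $\delta^d$) gives
$$\|\langle \cdot\rangle_{[n,\delta]}^d g_{[n,\delta]}\|_{L^p(\R^d)}^p = \delta^d\,\|\langle\cdot\rangle^d g\|_{L^p(\R^d)}^p,$$
which is uniform in $n$. Hence the previous display becomes
$$\|f\otimes_\delta g\|_{L^p(\R^d)} \lesssim_{p,d} \delta^{d/p}\,\|[f]_\delta\|_{\ell^p(\Z^d)}\,\|\langle\cdot\rangle^d g\|_{L^p(\R^d)}.$$

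To finish, I would invoke Lemma \ref{discret}(i), which yields $\|[f]_\delta\|_{\ell^p(\Z^d)} \lesssim_{p,d} \delta^{-d/p}\|f\|_{L^p(\R^d)}$. The two powers of $\delta$ cancel exactly, giving the claimed estimate
$$\|f\otimes_\delta g\|_{L^p(\R^d)} \lesssim_{p,d} \|f\|_{L^p(\R^d)}\,\|\langle\cdot\rangle^d g\|_{L^p(\R^d)}.$$
There is no serious obstacle here: the proof is essentially a bookkeeping of the scaling in $\delta$, with the Localization Lemma doing the heavy lifting of decoupling the overlapping translates, and the extra weight $\langle\cdot\rangle^d$ in the hypothesis being precisely what is needed to absorb the localization factors produced by Lemma \ref{loc}.
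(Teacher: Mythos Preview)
Your proof is correct and follows essentially the same approach as the paper: apply the Localization Lemma to the defining sum, use the change of variables to compute $\|\langle\cdot\rangle_{[n,\delta]}^d g_{[n,\delta]}\|_{L^p}^p = \delta^d \|\langle\cdot\rangle^d g\|_{L^p}^p$, and then invoke Lemma~\ref{discret}(i) so that the powers of $\delta$ cancel. The paper's argument is slightly terser but identical in substance.
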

\begin{proof}
We apply
the localization lemma (Lemma \ref{loc}) to estimate
\begin{align*}
\| f\otimes_\delta g \|_{L^p(\R^d)}&=
\|\sum_{n \in \Z^d} [f]_\delta(n) g_{[n,\delta]}\|_{L^p(\R^d)}\\
&\lesssim_{p} \left(\sum_{n\in\Z^d} [f]^p_\delta(n)\| \langle \cdot \rangle_{[n,\delta]}^d g_{[n,\delta]} \|^p_{L^p(\R^d)}\right)^{1/p}\\
&\lesssim_{p,d} \delta^{-d/p}\|f\|_{L^p(\R^d)}
\delta^{d/p}\|\langle \cdot \rangle^dg\|_{L^p(\R^d)}
\end{align*}
where we applied Lemma \ref{discret}(i) and the observation
  $\| \langle \cdot \rangle_{[n,\delta]}g_{[n,\delta]} \|^p_{L^p(\R^d)}=
\delta^{d}\| \langle \cdot \rangle g \|^p_{L^p(\R^d)}$ for all $n.$ 

\end{proof}
\noindent In particular, if supp$(g)\subset[0,1]^d,$ then the above lemma yields the simple estimate
$$
\| f\otimes_\delta g \|_{L^p(\R^d)}\lesssim_{p,d}\|f\|_{L^p(\R^d)}
\|g\|_{L^p(\R^d)}.
$$

The following lemma reduces us to considering multiscale tensor products $f\otimes_\delta g$ with $g$ supported in $[0,1]^d.$

\begin{lemma}\label{adecay} Assume that $f$ is an envelope function and $g$
is either a  localized function, or $($more generally$)$ that it satisfies for each $p\in (0,\infty)$
\begin{equation}\label{decay}
\|g1_{k+[0,1]^d}\|_{L^p(\R^d)}\lesssim_{g,p}\langle k \rangle^{-a},\quad k\in\Z^d,
\end{equation}
with some $a>d$. 
Then $f\otimes_\delta g$ is a multiscale function that is equivalent to $f\otimes_\delta \widetilde g$, where $\widetilde g$ is supported in $[0,1]^d$ and given explicitly by the formula
\begin{equation}\label{discret1}
\widetilde g(x)\coloneqq 1_{[0,1]^d}(x)\sum_{k\in\Z^d}g(x+k).
\end{equation}
\end{lemma}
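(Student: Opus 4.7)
\textbf{Proof proposal for Lemma \ref{adecay}.} My aim is to exhibit $f\otimes_\delta g$ and $f\otimes_\delta\widetilde g$ as differing by a negligible function. Before anything else, I want to confirm that $\widetilde g$ is a localized function, so that $f\otimes_\delta\widetilde g$ is a bona fide multiscale function in the sense of Definition \ref{ms}. Since $\widetilde g$ is supported in $[0,1]^d$, the decay weight $\langle\cdot\rangle^N$ is harmless, and by the triangle inequality
$$\|\widetilde g\|_{L^p([0,1]^d)} \leq \sum_{k\in\Z^d}\|g(\cdot+k)\|_{L^p([0,1]^d)} = \sum_{k\in\Z^d}\|g\|_{L^p(k+[0,1]^d)} \lesssim_{g,p} \sum_{k\in\Z^d}\langle k\rangle^{-a},$$
which converges because $a>d$. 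Hence $\widetilde g\in L^p$ for every $p\in(1,\infty)$ and is localized.

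Next, the key step is to reindex the sum defining $f\otimes_\delta g$ so that terms with a common spatial support cube are grouped together. Writing $g = \sum_{k\in\Z^d} g\cdot 1_{k+[0,1]^d}$, the rescaled piece $g(x/\delta - n)\,1_{k+[0,1]^d}(x/\delta - n)$ is supported exactly on the cube $Q_m := m\delta + [0,\delta]^d$ with $m=n+k$. Performing the substitution $m=n+k$ in the series for $f\otimes_\delta g$ yields
$$f\otimes_\delta g(x) = \sum_{m\in\Z^d}1_{Q_m}(x)\sum_{k\in\Z^d}[f]_\delta(m-k)\,g\bigl(x/\delta - m + k\bigr).$$
On the other hand, unfolding the definition of $\widetilde g$ one finds $\widetilde g_{[m,\delta]}(x)=1_{Q_m}(x)\sum_{k\in\Z^d}g(x/\delta-m+k)$, so
$$f\otimes_\delta\widetilde g(x) = \sum_{m\in\Z^d}1_{Q_m}(x)\sum_{k\in\Z^d}[f]_\delta(m)\,g\bigl(x/\delta-m+k\bigr).$$
Therefore the difference decomposes as $\sum_{k\in\Z^d} E_k$, where
$$E_k(x) := \sum_{m\in\Z^d}1_{Q_m}(x)\bigl([f]_\delta(m-k)-[f]_\delta(m)\bigr)g\bigl(x/\delta-m+k\bigr).$$

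The final step is to estimate $\|E_k\|_{L^p(\R^d)}$ and sum in $k$. Disjointness of the cubes $Q_m$ and the substitution $y=x/\delta-m$ give the identity
$$\|E_k\|_{L^p(\R^d)} = \delta^{d/p}\,\|g\|_{L^p(k+[0,1]^d)}\,\bigl\|[f]_\delta(\cdot-k)-[f]_\delta(\cdot)\bigr\|_{\ell^p(\Z^d)}.$$
Hypothesis \eqref{decay} controls the middle factor by $\langle k\rangle^{-a}$, while Lemma \ref{discret}(ii) controls the last factor by $(|k|\delta)^{\eps_p}\delta^{-d/p}$ for some $\eps_p>0$ depending on $f$ and $p$. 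Shrinking $\eps_p$ if needed so that $\eps_p<a-d$ (still legal, since smaller H\"older exponents remain valid), we obtain
$$\|E_k\|_{L^p(\R^d)} \lesssim_{f,g,p,d} \delta^{\eps_p}\,\langle k\rangle^{-a+\eps_p},$$
and Minkowski's inequality then gives
$$\bigl\|f\otimes_\delta g - f\otimes_\delta\widetilde g\bigr\|_{L^p(\R^d)} \leq \sum_{k\in\Z^d}\|E_k\|_{L^p(\R^d)} \lesssim \delta^{\eps_p}\sum_{k\in\Z^d}\langle k\rangle^{-a+\eps_p} \lesssim \delta^{\eps_p},$$
since $a-\eps_p>d$. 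This shows the difference is negligible and completes the equivalence.

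The main (and only real) obstacle is the bookkeeping of the change of index $m=n+k$: once one sees that this groups all the contributions with spatial support in the single cube $Q_m$, the estimate reduces to pairing the H\"older regularity of $[f]_\delta$ in the discrete variable against the $\ell^1$-summability of $\langle k\rangle^{-a}$, both of which are already packaged in Lemma \ref{discret}(ii) and in hypothesis \eqref{decay}. The case where $g$ is itself a localized function is then just the special case of \eqref{decay} with $a$ arbitrarily large, handled by the same argument.
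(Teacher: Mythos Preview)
Your proof is correct and follows essentially the same route as the paper: both decompose $g$ into its restrictions to unit cubes, reindex, and bound the difference by pairing the H\"older regularity of $f$ against the summable decay $\langle k\rangle^{-a}$. The only cosmetic difference is that the paper phrases each $k$-summand as $(\Delta_{k\delta}f)\otimes_\delta (1_{[0,1]^d}g(\cdot-k))$ and invokes Lemma~\ref{Lp-multiscale} together with the continuous H\"older bound $\|\Delta_{k\delta}f\|_{L^p}\lesssim(|k|\delta)^{\eps}$, whereas you compute $\|E_k\|_{L^p}$ directly from cube disjointness and then appeal to the discrete version Lemma~\ref{discret}(ii); these are the same estimate in two dressings.
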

\begin{proof}
Observe first that 
any localized function satisfies \eqref{decay}.
The idea of the proof is to use the H\"older type continuity of $f$ to
show that one may actually treat $f$ locally as a constant in the relevant scales.
To show this, fix $p\in (1,\infty)$ and observe  that by Lemma \ref{Lp-multiscale} we have 
\begin{equation}\label{discret2}
\|f\otimes_\delta 1_{[0,1]^d}g\|_{L^p(\R^d)}\lesssim \|f\|_{L^p(\R^d)}\|1_{[0,1]^d}g\|_{L^p(\R^d)}.
\end{equation}
From Definition \ref{mtp}, for any $\delta >0$ we may decompose
$$f\otimes_\delta g (x)= \sum_{k\in\Z^d}\big(f(\cdot +k\delta)\otimes_\delta (1_{[0,1]^d}g(\cdot-k))\big)(x).$$
Hence
$$
H_\delta\coloneqq f\otimes_\delta (g-\widetilde g)=\sum_{k\in\Z^d}
(\Delta_{k\delta} f) \otimes_\delta 1_{[0,1]^d}g(\cdot-k).
$$
By the envelope property of $f$ there is  $\varepsilon\in (0,a-d)$ so that $\|\Delta_{k\delta} f\|_{L^p(\R^d)}\lesssim (|k|\delta)^\eps.$ Hence
an application of \eqref{discret2}  and our assumption on $g$ yield that
$$
\| H_\delta\|_{L^p(\R^d)}\lesssim \sum_{k\in\Z^d}(|k|\delta)^\eps
\|1_{[0,1]^d}g(\cdot -k)\|_{L^p(\R^d)}\lesssim \delta^\varepsilon,
$$
and the neglibility of $H_\delta$ follows.
\end{proof}

We remark that later on when we deal with stochastic multiscale functions then the natural analogue of the above lemma is no longer valid, causing some additional technical complications.

We now describe the weak convergence of multiscale functions in the limit $\delta \to 0$.

\begin{lemma}\label{weak} 
 Let $F = F_\delta$ be a multiscale function and $1 < p < \infty$.  Then $\|F_\delta\|_{L^p(\R^d)}$ is bounded uniformly in $\delta$.  Furthermore, there exists $F_0 \in L^p(\R^d)$ such that $F_\delta$ converges weakly in $L^p(\R^d)$ to $F_0$. Actually, 
 there is $\eps >0$ such that for any 
 test function $\phi\in C^\infty_0(\R^d)$
 $$
 \left|\int_{\R^d}(F_0(x)-F_\delta(x))\phi(x)dx\right|\lesssim_{\phi,F_\delta} \delta^\eps
 $$
\end{lemma}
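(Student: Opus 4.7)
The plan is to reduce everything to a single summand via linearity, then test against $\phi$ by a double approximation: replacing the smooth weight $\phi$ by its value at the lattice point $\delta n$, and then recognizing what remains as a Riemann sum for the envelope.

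\textbf{Setup and uniform bound.} By Definition~\ref{ms} we may write $F_\delta = \sum_{j=1}^J f_j \otimes_\delta g_j + G_\delta$. The uniform $L^p$ bound is immediate: Lemma~\ref{Lp-multiscale} gives $\|f_j \otimes_\delta g_j\|_{L^p(\R^d)} \lesssim_{p,d} \|f_j\|_{L^p}\,\|\langle\cdot\rangle^d g_j\|_{L^p}$ uniformly in $\delta$, while negligibility gives $\|G_\delta\|_{L^p} \lesssim \delta^{\eps_p}$. By linearity of the claim, it suffices to treat a single tensor product $f \otimes_\delta g$; the negligible term $G_\delta$ trivially contributes $O(\delta^{\eps_p} \|\phi\|_{L^{p'}})$ to the test integral against any $\phi \in C_0^\infty$.

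\textbf{Reduction to compactly supported $g$.} Applying Lemma~\ref{adecay}, $f\otimes_\delta g$ is equivalent (up to a negligible remainder) to $f \otimes_\delta \widetilde g$ with $\widetilde g \coloneqq 1_{[0,1]^d}\sum_{k\in\Z^d} g(\cdot + k)$ supported in $[0,1]^d$. Note that $\int \widetilde g = \int g$, so the conjectural weak limit is unchanged. Thus I may assume $\supp g \subset [0,1]^d$.

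\textbf{Testing against $\phi \in C_0^\infty$.} Expanding the definition and changing variables $x = \delta(n+y)$,
\begin{equation*}
\int_{\R^d} \phi(x)\, (f \otimes_\delta g)(x)\, dx
= \delta^d \sum_{n \in \Z^d} [f]_\delta(n) \int_{[0,1]^d} g(y)\, \phi(\delta n + \delta y)\, dy.
\end{equation*}
Since $\phi$ is smooth and $|y|\le\sqrt d$, one has $|\phi(\delta n + \delta y) - \phi(\delta n)| \lesssim_\phi \delta$, uniformly in $n, y$. The sum is effectively supported in $\{n : \delta n \in \supp\phi + O(\delta)\}$, a set containing $O(\delta^{-d})$ lattice points. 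Hence, using Lemma~\ref{discret}(i) (or a H\"older bound with $\phi$'s compact support) to control $\delta^d \sum_n |[f]_\delta(n)| 1_{\delta n \in \supp\phi+O(\delta)} \lesssim_\phi \|f\|_{L^1(\supp\phi+O(\delta))} \lesssim_\phi 1$, the replacement $\phi(\delta n+\delta y) \mapsto \phi(\delta n)$ introduces an error of size $O_{\phi,f}(\delta)$. This leaves
\begin{equation*}
\Bigl(\int g\Bigr) \cdot \delta^d \sum_{n \in \Z^d} [f]_\delta(n)\, \phi(\delta n) + O_{\phi,f}(\delta).
\end{equation*}

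\textbf{Riemann sum step.} Using $[f]_\delta(n) = \delta^{-d}\int_{\delta n + [0,\delta]^d} f$, we recognize
\begin{equation*}
\delta^d \sum_n [f]_\delta(n)\, \phi(\delta n) = \int_{\R^d} f(x)\, \phi\!\left(\delta \lfloor x/\delta \rfloor\right) dx,
\end{equation*}
which differs from $\int f\phi$ by $\int f(x)[\phi(\delta\lfloor x/\delta\rfloor)-\phi(x)]\,dx = O_{\phi,f}(\delta)$, again by the smoothness of $\phi$ and its compact support (applying H\"older with $\|f\|_{L^p(\supp\phi+O(\delta))}$).

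\textbf{Conclusion.} Combining the two approximations,
\begin{equation*}
\int \phi \cdot (f \otimes_\delta g)\, dx = \Bigl(\int g\Bigr) \int f\phi\, dx + O_{\phi,f}(\delta).
\end{equation*}
Summing over $j$ identifies $F_0 = \sum_{j=1}^J \bigl(\int g_j\bigr)\, f_j \in L^p(\R^d)$, with the claimed rate $\lesssim_{\phi,F_\delta} \delta^\eps$ (taking $\eps$ to be the minimum of $1$ and the negligibility exponents). The uniform $L^p$ bound combined with this rate on the dense subclass $C_0^\infty \subset L^{p'}$ upgrades pointwise convergence to weak convergence of $F_\delta$ to $F_0$ in $L^p(\R^d)$.

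The main technical nuisance is keeping track of the two independent smallness sources (smoothness of $\phi$ on scale $\delta$ versus the Riemann-sum error), and ensuring that the non-compactly-supported $g$ is first replaced by $\widetilde g$ via Lemma~\ref{adecay} so that no spatial tail contributions survive. After this reduction the estimates are entirely elementary.
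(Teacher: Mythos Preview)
Your proof is correct and reaches the same conclusion $F_0 = \sum_j (\int g_j)\, f_j$ with a quantitative rate $\delta^\eps$. The overall strategy---test against $\phi$, exploit that $\phi$ varies by $O(\delta)$ across a $\delta$-cell, and recognize a Riemann sum---is essentially the same mechanism as in the paper, but the organization differs in a way worth noting.

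The paper does \emph{not} reduce to compactly supported $g$ via Lemma~\ref{adecay}. Instead it splits each $g$ as $g = (\int g)\,1_{[0,1]^d} + g_0$ with $\int g_0 = 0$, and treats the two pieces separately: for the model piece $g = 1_{[0,1]^d}$ it shows the stronger fact that $f \otimes_\delta 1_{[0,1]^d} \to f$ \emph{strongly} in $L^p$ (this is recorded afterwards as Corollary~\ref{envelope=multiscale}(i) and reused later), while for the mean-zero piece it writes $\int g(r)\,\phi(\delta n + \delta r)\,dr = \int g(r)\,\Delta_{r\delta}\phi(\delta n)\,dr$ directly, using the localized decay of $g$ to control the tail in $r$ without ever assuming compact support. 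Your route trades this case split for an invocation of Lemma~\ref{adecay}, which lets you carry $\int g$ along as a multiplicative constant rather than separating it out; this is slightly more streamlined for the present lemma, but it does not isolate the strong convergence of the model case as a byproduct. Either way the analytic content is the same: smoothness of $\phi$ on scale $\delta$ plus a Riemann-sum identification.
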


\begin{proof}  By linearity it suffices to treat the cases when $F$ is either a multiscale tensor product or a negligible function.  The claims are trivial in the latter case, so assume that $F_\delta = f \otimes_\delta g$ for some envelope function $f$ and localized function $g$. 

The uniform boundedness of $\|F_\delta\|_{L^p(\R^d)}$ follows immediately from Lemma \ref{Lp-multiscale}.
In order to establish the weak convergence let us first consider the model case in which $g = 1_{[0,1]^d}$.  Then for a.e. $x$ we  have $F_\delta(x) = [f]_\delta(n)$, where $n$ is the integer part of $x/\delta$.  We thus see that
$$ F_\delta(x) - f(x) = \frac{1}{\delta^d} \int_{n\delta + [0,\delta]^d} f(y) - f(x)\ dy$$
and so by the triangle inequality
$$ |F_\delta(x) - f(x)| \leq \frac{1}{\delta^d} \int_{[-\delta,\delta]^d} |\Delta_r f(x)|\ dr.$$
Taking $L^p$ norms, applying Minkowski's inequality, and using the fact that $f \in \Lambda^{\eps_p,p}(\R^d)$ for some $\eps_p > 0$ we conclude that $F_\delta$ converges strongly in $L^p(\R^d)$ to $f$, which certainly suffices.  

By subtracting a constant multiple of this model case, we may assume in general that $g$ has mean zero.  We  claim that $F_\delta$ now converges weakly to zero.  Let $\phi \in C^\infty_0(\R^d)$ be a test function.  We need to show that
$$ \int_{\R^d} \sum_{n \in \Z^d} [f]_\delta(n) g_{[n,\delta]}(x) {\phi}(x)\ dx \to 0$$
as $\delta \to 0$.  Using the mean zero nature of $g$, we can rewrite the left-hand side as
$$ \delta^d \sum_{n \in \Z^d} [f]_\delta(n) \int_{\R^d} g(r) \Delta_{r\delta} {\phi}(n\delta)\ dr.$$
Since $\phi$ is a test function and $g$ is localized, the inner integral has magnitude $O( \delta )$, and furthermore vanishes unless $n= O_\phi(1/\delta)$.  
Thus the whole expression is bounded by $\delta\int_{|x|<c(\phi)/\delta} |f|$, and the Lemma follows.
\end{proof}

Parts (i) and (iii) of the  the following corollary follow immediately from the above proof, and (ii) is a consequence of (i). 

\begin{corollary}\label{envelope=multiscale} \text{}
\begin{itemize}
\item[(i)]
If  $f$ is an envelope function then $f-f \otimes_\delta 1_{[0,1]^d}$ is negligible.
\item[(ii)]
Every envelope function is a multiscale function. 
\item[(iii)] If $F_\delta$ is a multiscale function with expansion $F_\delta = \sum_{j=1}^J f_j \otimes_\delta g_j + G_\delta$ $($where $G_\delta$ is negligible$)$, then for any $p\in (1,\infty)$
$$
F_\delta \underset{\delta\to 0}{\longrightarrow}  \sum_{j=1}^Jc_jf_j
\quad  \textrm{\rm weakly in  } L^p\;\; \textrm{\rm with}\;\;
c_j\coloneqq  \int_{\R^d} g_j,\;\; j=1,\ldots, J.
$$
\end{itemize}
\end{corollary}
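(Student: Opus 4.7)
The plan is to extract the needed estimates directly from the proof of Lemma \ref{weak}, which secretly yields quantitative $L^p$-rates beyond the bare weak-convergence statement.

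For part (i), I would revisit the model case $g = 1_{[0,1]^d}$ in the proof of Lemma \ref{weak}, which produced the pointwise bound
$$|(f \otimes_\delta 1_{[0,1]^d})(x) - f(x)| \leq \frac{1}{\delta^d} \int_{[-\delta,\delta]^d} |\Delta_r f(x)|\ dr.$$
Taking $L^p$ norms, applying Minkowski's inequality and the envelope property $f \in \Lambda^{\eps_p,p}(\R^d)$ for some $\eps_p>0$, one obtains $\|f - f \otimes_\delta 1_{[0,1]^d}\|_{L^p(\R^d)} \lesssim_{p,f} \delta^{\eps_p}$. Since this holds for every $1<p<\infty$, the difference is negligible in the sense of the definition.

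For part (ii), the indicator $1_{[0,1]^d}$ is bounded and compactly supported, hence trivially a localized function, so $f \otimes_\delta 1_{[0,1]^d}$ is a multiscale tensor product (Definition \ref{mtp}). The decomposition $f = f \otimes_\delta 1_{[0,1]^d} + (f - f \otimes_\delta 1_{[0,1]^d})$, combined with (i), is then an expansion of the form required by Definition \ref{ms}.

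For part (iii), linearity reduces matters to a single tensor product $f \otimes_\delta g$, since the negligible remainder $G_\delta$ converges to zero strongly in $L^p$. Setting $c \coloneqq \int_{\R^d} g$, I would split $g = c\,1_{[0,1]^d} + \widetilde g$ with $\widetilde g \coloneqq g - c\,1_{[0,1]^d}$; note that $\widetilde g$ is still a localized function and has mean zero. Then
$$f \otimes_\delta g \;=\; c\,(f \otimes_\delta 1_{[0,1]^d}) \;+\; f \otimes_\delta \widetilde g.$$
The first summand converges strongly in $L^p$ to $c f$ by part (i), and hence weakly. The second summand is exactly the case treated in the mean-zero portion of the proof of Lemma \ref{weak}: testing against $\varphi \in C^\infty_0(\R^d)$ and using the mean-zero cancellation in $\widetilde g$ produced a bound of the form $|\langle f \otimes_\delta \widetilde g, \varphi \rangle| \lesssim_\varphi \delta \int_{|x| \lesssim 1/\delta} |f|\,dx$, which for $f \in L^p$ with $p<\infty$ yields a polynomial decay $O_\varphi(\delta^\eps)$ (via H\"older on a ball of radius $\sim 1/\delta$). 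Summing over $j=1,\ldots,J$ produces both the weak convergence to $\sum_j c_j f_j$ and the quantitative rate $\delta^\eps$ asserted in the final claim of Lemma \ref{weak}.

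There is no serious obstacle; the only care required is bookkeeping the two halves of Lemma \ref{weak}'s proof (the strong $L^p$ bound in the model case, and the mean-zero pairing against test functions) so that the quantitative estimates combine into the negligibility claim of (i) and the quantitative weak-convergence claim in (iii).
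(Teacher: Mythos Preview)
Your proposal is correct and follows essentially the same approach as the paper: the paper simply states that parts (i) and (iii) follow immediately from the proof of Lemma~\ref{weak} (the model case $g=1_{[0,1]^d}$ for (i), and that case combined with the mean-zero case for (iii)), and that (ii) is a consequence of (i). Your write-up spells out exactly these steps, including the splitting $g = c\,1_{[0,1]^d} + \widetilde g$ with $\widetilde g$ of mean zero, so there is nothing to add.
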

\noindent We remark that conclusion (iii) makes precise the intuitively obvious statement  that a multiscale tensor product approximates in some
natural sense (a multiple of) the envelope function as $\delta\to 0.$

The sum of two multiscale functions is clearly a multiscale function.  We proceed to give  other closure properties of multiscale functions, the first one being the closure under multiplication.

\begin{proposition}\label{multmult}  If $F = F_\delta$ and $G = G_\delta$ are multiscale functions, then $FG = F_\delta G_\delta$ is also a multiscale function.
\end{proposition}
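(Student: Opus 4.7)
The plan is to reduce by bilinearity to the essential case, a product of two tensor products, and then to match it against the candidate $(f_1 f_2)\otimes_\delta(g_1\widetilde g_2)$, where $\widetilde g_2(y)\coloneqq\sum_{k\in\Z^d}g_2(y-k)$ is the $\Z^d$-periodization of $g_2$. By bilinearity it suffices to treat three subcases: products of two tensor products, products of a tensor product with a negligible function, and products of two negligibles. The latter two reduce to H\"older's inequality applied at the dual pair $(2p,2p)$ combined with the uniform $L^p$ bound of Lemma \ref{Lp-multiscale}, which yields $O(\delta^{\varepsilon_{2p}})$; so the real work is in the first subcase.

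For that subcase, note first that $f_1 f_2$ is an envelope function by Lemma \ref{envprod}, that $\widetilde g_2$ is bounded (since $\sum_k\langle y-k\rangle^{-N}$ converges uniformly for $N>d$), and hence that $g_1\widetilde g_2$ is localized. Expanding the product pointwise and relabelling $n_2=n+k$ gives
\begin{equation*}
(f_1\otimes_\delta g_1)(f_2\otimes_\delta g_2) = \sum_{k\in\Z^d}\sum_{n\in\Z^d}[f_1]_\delta(n)\,[f_2]_\delta(n+k)\,(h_k)_{[n,\delta]},
\end{equation*}
with $h_k(y)\coloneqq g_1(y)g_2(y-k)$. Since $\sum_k h_k=g_1\widetilde g_2$, linearity of $\otimes_\delta$ in its second slot yields $(f_1 f_2)\otimes_\delta(g_1\widetilde g_2)=\sum_k(f_1 f_2)\otimes_\delta h_k$, so the task reduces to controlling the discrepancy slice-by-slice in $k$ and then summing in $k$.

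To that end I would decompose
\begin{equation*}
[f_1]_\delta(n)[f_2]_\delta(n+k)-[f_1 f_2]_\delta(n) = \bigl([f_1]_\delta[f_2]_\delta-[f_1 f_2]_\delta\bigr)(n) + [f_1]_\delta(n)\,(\Delta_k[f_2]_\delta)(n).
\end{equation*}
Lemma \ref{commutator} bounds the $\ell^p_n$ norm of the first bracket by $\delta^{-d/p+\varepsilon_p}$, while H\"older's inequality combined with Lemma \ref{discret}(i),(ii) bounds that of the second by $(|k|\delta)^{\varepsilon_{2p}}\delta^{-d/p}$. Inserting this into the localization lemma (Lemma \ref{loc}) and using the rapid $k$-decay $\|\langle\cdot\rangle^d h_k\|_{L^p}\lesssim_L\langle k\rangle^{-L}$ (which holds for every $L$, from the bound $\langle y\rangle^d|h_k(y)|\lesssim\langle y\rangle^{d-M}\langle y-k\rangle^{-M}$ for arbitrarily large $M$), the total error is
\begin{equation*}
\bigl\|(f_1\otimes_\delta g_1)(f_2\otimes_\delta g_2) - (f_1 f_2)\otimes_\delta(g_1\widetilde g_2)\bigr\|_{L^p(\R^d)} \;\lesssim\; \sum_{k\in\Z^d}\bigl(\delta^{\varepsilon_p}+(|k|\delta)^{\varepsilon_{2p}}\bigr)\langle k\rangle^{-L},
\end{equation*}
which for $L>d+\varepsilon_{2p}$ is $O(\delta^{\min(\varepsilon_p,\varepsilon_{2p})})$, hence negligible.

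The main obstacle I anticipate is exactly the sum over the shift $k$: the slice error $(|k|\delta)^{\varepsilon_{2p}}$ grows with $|k|$, so one must spend enough of the rapid decay of $g_1,g_2$ on the weight $\langle k\rangle^{-L}$ coming from $h_k$ to close the $k$-sum, while the intra-slice errors already needed H\"older control of $[f_1]_\delta$ and $[f_2]_\delta$ in $\ell^{2p}$. Once this is arranged, the identification of $g_1\widetilde g_2$ as the correct ``fused'' localized function packages the entire product into a single multiscale tensor product plus a negligible remainder.
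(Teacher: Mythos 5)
Your argument is correct, but it takes a genuinely different route from the paper's proof of Proposition \ref{multmult}. The paper first invokes Lemma \ref{adecay} to replace $g_1,g_2$ by equivalent localized functions supported in $[0,1]^d$; then the rescaled bumps $ (g_1)_{[n_1,\delta]}, (g_2)_{[n_2,\delta]}$ have disjoint supports for $n_1\neq n_2$, so all cross terms vanish identically, only the diagonal sum $\sum_n [f_1]_\delta(n)[f_2]_\delta(n)(g_1g_2)_{[n,\delta]}$ survives, and Lemma \ref{commutator} finishes the proof in one step. You instead keep general localized functions and control the off-diagonal interaction $k=n_2-n_1$ directly, fusing it into the periodized localized function $g_1\widetilde g_2$; this reproduces the second representation recorded in Corollary \ref{product_equivalence}, and it is essentially the computation the paper is forced to carry out in the stochastic case (Proposition \ref{multmult2}), where the compact-support reduction of Lemma \ref{adecay} is no longer available. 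So your method is longer here but transfers essentially verbatim to the stochastic setting, whereas the paper's deterministic shortcut does not. One technical caveat: your justifications that $\widetilde g_2$ is bounded and that $\langle y\rangle^d|h_k(y)|\lesssim \langle y\rangle^{d-M}\langle y-k\rangle^{-M}$ implicitly assume pointwise rapid decay of $g_1,g_2$, which the paper's definition of a localized function does not supply (it only gives weighted $L^p$ bounds, and localized functions need not even be bounded). The estimates you actually use, namely $\|\langle\cdot\rangle^{N}h_k\|_{L^p(\R^d)}\lesssim_{N,L,p}\langle k\rangle^{-L}$ for every $L$ (and hence that $g_1\widetilde g_2$ is localized), do hold: write $\langle k\rangle^{L}\lesssim_L \langle y\rangle^{L}\langle y-k\rangle^{L}$ and apply H\"older at the pair $(2p,2p)$ to $\langle\cdot\rangle^{N+L}g_1$ and $\langle\cdot-k\rangle^{L}g_2(\cdot-k)$. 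With that one-line repair, your slice-by-slice error bound via Lemmas \ref{loc}, \ref{discret} and \ref{commutator} closes the $k$-sum exactly as you wrote, and the conclusion follows since $f_1f_2$ is an envelope function by Lemma \ref{envprod}.
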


\begin{proof}  If either $F$ or $G$ is negligible, then by Lemma \ref{weak} and H\"older's inequality we see that $FG$ is also negligible.  Hence by linearity we may assume that $F, G$ are multiscale tensor products, e.g., $F = f \otimes_\delta g$ and $G = f' \otimes_\delta g'$.  By Lemma \ref{adecay} we may assume in addition that supp$(g)\subset[0,1]^d$ and
supp$(g')\subset [0,1]^d. $ Then, by observing that $g_{[n,\delta]}, g'_{n',\delta}$ have disjoint supports if $n\not=n'$ we get
$$ FG(x) = \sum_{n \in \Z^d} [f]_\delta(n) [f']_\delta(n) g''_{[n,\delta]},$$
where $g'' \coloneqq gg'$ is a localized function.
 From Lemma \ref{commutator} 
we see that
$$ \sum_{n \in \Z^d} ([f]_\delta(n) [f']_\delta(n) - [ff']_\delta(n)) g''_{[n,\delta]}$$
is negligible. Hence $F G$  is equivalent to 
$$ \sum_{n \in \Z^d} [ff']_\delta(n) g''_{[n,\delta]}$$
which equals $(ff') \otimes_\delta g''(x)$.  Since $g''$ is localized, and (by Lemma \ref{envprod}) $ff'$ is an envelope function, the claim follows.
\end{proof}

\begin{corollary}\label{product_equivalence} Assume that $f,f'$ are envelope functions and $g,g'$ are localized functions. Then
the product $(f\otimes_\delta g)(f'\otimes_\delta g')$ is 
a multiscale function equivalent to either of the multiscale tensor products
$ff'\otimes \widetilde g_1$, $ff'\otimes \widetilde g_2$, where 
$$
\widetilde g_1(x) \coloneqq 1_{[0,1]^d}(x)\sum_{n,m\in \Z^d} g(n+x)g'(m+x)
$$
and 
$$
\widetilde g_2(x) \coloneqq \sum_{n\in \Z^d} g(n+x)g'(x).
$$
\end{corollary}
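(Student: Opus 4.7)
The plan is to chain the two equivalences
\[ (f\otimes_\delta g)(f'\otimes_\delta g') \;\equiv\; (ff')\otimes_\delta \widetilde g_1 \;\equiv\; (ff')\otimes_\delta \widetilde g_2, \]
both of which can be squeezed out of results already established in this section. For the first equivalence I would rerun the proof of Proposition \ref{multmult} while keeping track of which localized function one lands on. That proof first invokes Lemma \ref{adecay} to replace $g,g'$ by the $[0,1]^d$-supported representatives
\[ \tilde g(x) = 1_{[0,1]^d}(x)\sum_{n\in\Z^d} g(x+n), \qquad \tilde g'(x) = 1_{[0,1]^d}(x)\sum_{m\in\Z^d} g'(x+m). \]
After this reduction $\tilde g_{[n,\delta]}$ and $\tilde g'_{[n',\delta]}$ have disjoint supports for $n\neq n'$, the double sum in the product collapses to $\sum_n [f]_\delta(n)[f']_\delta(n)(\tilde g\tilde g')_{[n,\delta]}$, and Lemma \ref{commutator} replaces $[f]_\delta[f']_\delta$ by $[ff']_\delta$ up to a negligible error. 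A direct expansion gives $\tilde g(x)\tilde g'(x) = 1_{[0,1]^d}(x)\sum_{n,m} g(n+x)g'(m+x) = \widetilde g_1(x)$, which is precisely the first equivalence.

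For the second equivalence I would apply Lemma \ref{adecay} once more, this time to $(ff')\otimes_\delta \widetilde g_2$. Writing $\widetilde g_2(x) = g'(x)P(x)$ with $P(x) \coloneqq \sum_n g(n+x)$ a $\Z^d$-periodic function, one has $\|P\|_{L^p(k+[0,1]^d)} = \|P\|_{L^p([0,1]^d)}$ independent of $k\in\Z^d$, while $\|g'1_{k+[0,1]^d}\|_{L^{p}}$ has arbitrary polynomial decay in $|k|$ because $g'$ is localized; H\"older's inequality then shows that $\widetilde g_2$ satisfies hypothesis \eqref{decay} of Lemma \ref{adecay} for every $a>d$. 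The lemma thus yields an equivalence of $(ff')\otimes_\delta \widetilde g_2$ with $(ff')\otimes_\delta h$, where, using the periodicity of $P$,
\[ h(x) = 1_{[0,1]^d}(x)\sum_{m\in\Z^d} \widetilde g_2(x+m) = 1_{[0,1]^d}(x)\,P(x)\sum_{m\in\Z^d} g'(x+m) = \widetilde g_1(x), \]
closing the chain.

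The only step that is not an immediate citation of earlier lemmas is the verification that $\widetilde g_2$ meets the decay hypothesis \eqref{decay} of Lemma \ref{adecay}, and even this reduces to a two-line H\"older estimate separating the periodic factor $P$ from the localized factor $g'$. Everything else is a direct reapplication of Proposition \ref{multmult} and Lemma \ref{adecay}, so I expect no substantive obstacle.
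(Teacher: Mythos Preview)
Your proposal is correct and follows essentially the same route as the paper's proof: the paper simply says the $\widetilde g_1$ statement follows from examining the proofs of Lemma~\ref{adecay} and Proposition~\ref{multmult}, and that the $\widetilde g_2$ statement follows from Lemma~\ref{adecay} together with the identity $\widetilde g_1(x)=1_{[0,1]^d}(x)\sum_{k}\widetilde g_2(x+k)$, which is exactly the chain you spell out. Your H\"older argument separating the periodic factor $P$ from the localized factor $g'$ in fact yields arbitrary polynomial decay (i.e.\ \eqref{decay} for every $a>d$), so you have also verified the paper's stronger claim that $\widetilde g_2$ is localized.
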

\begin{proof}
The statement concerning $\widetilde g_1$ follows directly from examining the proofs of Lemma  \ref{adecay} and Proposition  \ref{multmult}. The second statement
in turn follows  from  Lemma \ref{adecay}
by  observing that $\widetilde g_2$ is localized and $\widetilde g_1(x)=1_{[0,1]^d}(x)\sum_{k\in\Z^d}\widetilde g_2(x+k)$.
\end{proof}

Interestingly enough, the multiscale property is also preserved under (translation and scaling invariant) singular integrals. This is
not at all evident a priori since $Tg$ is usually not even integrable 
if $g$ is a localized function. Recall from  standard Calder\'on-Zygmund theory (see e.g., \cite{stein:small})  that $T$ extends to a bounded linear operator on $L^p(\R^d)$ for all $1 < p < \infty$.

\begin{proposition}\label{tf}  If $F = F_\delta$ is a multiscale function, and $T$ is a (translation and dilation invariant) singular integral operator (independent of $\delta$), then $TF = T F_\delta$ is also a multiscale function.
\end{proposition}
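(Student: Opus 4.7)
The plan is to reduce by linearity to the case of a single multiscale tensor product $F_\delta = f \otimes_\delta g$, since the negligible case is immediate from the $L^p$-boundedness of $T$ (standard Calder\'on--Zygmund theory). Using Lemma \ref{adecay} I may further assume that $g$ is supported in $[0,1]^d$, and then split $g = c_g\, 1_{[0,1]^d} + g_0$ with $c_g \coloneqq \int g$ and $g_0$ of mean zero.

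For the constant-mean part, Corollary \ref{envelope=multiscale}(i) gives that $f \otimes_\delta 1_{[0,1]^d}$ is equivalent to $f$, so $c_g\, T(f \otimes_\delta 1_{[0,1]^d})$ is equivalent to $c_g\, Tf$. Since $T$ commutes with translations, $\Delta_h Tf = T \Delta_h f$, and together with the $L^p$-boundedness of $T$ this shows that $T$ maps $\Lambda^{\alpha,p}(\R^d)$ into itself. Thus $Tf$ is again an envelope function and hence, by Corollary \ref{envelope=multiscale}(ii), a multiscale function.

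For the mean-zero part, scale and translation invariance of $T$ yield
$$T(f \otimes_\delta g_0)(x) = \sum_{n \in \Z^d} [f]_\delta(n)\,(Tg_0)_{[n,\delta]}(x).$$
The crucial point is that since $g_0$ is compactly supported and mean-zero, a first-order Taylor expansion of the kernel $K$ of $T$ gives the improved decay $|(Tg_0)(x)| \lesssim \langle x \rangle^{-d-1}$. Hence the sum $h(y) \coloneqq \sum_{k \in \Z^d}(Tg_0)(y-k)$ converges absolutely almost everywhere, defining a $1$-periodic $L^p_{\mathrm{loc}}$-function which is mean-zero on $[0,1]^d$ (since $\int_{\R^d} Tg_0 = 0$). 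Setting $h^* \coloneqq 1_{[0,1]^d}\, h$, which is a localized function, I decompose
$$T(f \otimes_\delta g_0)(x) = [f]_\delta(m(x))\, h(x/\delta) + R(x),$$
where $m(x)\in\Z^d$ is the unique index with $x \in m(x)\delta+[0,\delta]^d$. The $1$-periodicity of $h$ identifies the first term with the multiscale tensor product $f \otimes_\delta h^*$, while the remainder is
$$R(x) = \sum_{n \in \Z^d} \bigl([f]_\delta(n) - [f]_\delta(m(x))\bigr)\,(Tg_0)_{[n,\delta]}(x).$$

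The main obstacle will be showing that $R$ is negligible. Re-indexing $k = n - m(x)$ and treating each slice $R_k$ separately, I plan to combine the discrete H\"older bound $\|\Delta_k[f]_\delta\|_{\ell^p} \lesssim \min\bigl((|k|\delta)^{\eps_p},1\bigr)\delta^{-d/p}$ of Lemma \ref{discret}(ii) with the spatial decay $\|Tg_0\|_{L^p([0,1]^d - k)} \lesssim \langle k \rangle^{-d-1}$, which together give $\|R_k\|_{L^p} \lesssim \min\bigl((|k|\delta)^{\eps_p},1\bigr)\langle k \rangle^{-d-1}$. Splitting the resulting sum at $|k| \sim 1/\delta$ will yield $\|R\|_{L^p} = O(\delta^{\eps_p})$. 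The mean-zero reduction is indispensable at this step: for a generic localized $g$ one has only $|Tg(x)| \sim |x|^{-d}$ at infinity, which is borderline non-summable and causes the analogous estimate to produce only $O(1)$ instead of a decaying bound.
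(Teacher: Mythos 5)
Your proposal is correct and takes essentially the same route as the paper: you split off the mean via $c_g 1_{[0,1]^d}$ and handle it through Corollary \ref{envelope=multiscale} together with the fact that $Tf$ is again an envelope function, and for the mean-zero part you exploit the cancellation to get $\langle\cdot\rangle^{-d-1}$ decay of $Tg_0$ and then periodize — your explicit $h$, $h^*$, $R$ computation is precisely a re-derivation of Lemma \ref{adecay}, which the paper simply cites after proving the local decay bound \eqref{tgp} (your prior reduction to $\supp g\subset[0,1]^d$ via Lemma \ref{adecay} lets you avoid the paper's extra splitting $g=g_1+g_2$ for non-compactly supported localized $g$). One cosmetic imprecision: the pointwise bound $|Tg_0(x)|\lesssim\langle x\rangle^{-d-1}$ is only valid away from the support of $g_0$ (a localized $g_0$ need not be bounded, and even for bounded $g_0$ the transform $Tg_0$ can be unbounded near $\supp g_0$), but since every step of your argument only uses the local $L^p$ version $\|Tg_0\|_{L^p([0,1]^d-k)}\lesssim\langle k\rangle^{-d-1}$ — which follows from the kernel cancellation for $|k|$ large and from $L^p$-boundedness of $T$ for $|k|$ small, exactly as in the paper's proof of \eqref{tgp} — the proof goes through as written.
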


\begin{proof}  If $F$ is negligible, then $TF$ is negligible also since $T$ is bounded on every $L^p(\R^d)$ space.  So we may assume that $F_\delta = f \otimes_\delta g$ for some envelope function $f$ and localized function $g$.  To simplify the notation we now allow all implicit constants to depend on $f,g,T,d$.

First suppose that $g = 1_{[0,1]^d}$.  By Corollary \ref{envelope=multiscale} $F_\delta$ differs from $f$ by a negligible function, thus $TF_\delta$ differs from $Tf$ by a negligible function.  Since $f$ is an envelope function, and $T$ is translation-invariant and bounded on every $L^p(\R^d)$, we conclude that $Tf$ is an envelope function, and thus a multiscale function again by Corollary \ref{envelope=multiscale}, and the claim follows.

By linearity it now suffices to treat the case when $g$ has mean zero.  Using the translation and dilation invariance of $T$, we observe that
$$ TF = \sum_{n \in \Z^d} [f]_\delta(n) T(g_{[n,\delta]}) = \sum_{n \in \Z^d} [f]_\delta(n) (Tg)_{[n,\delta]}.$$
If $Tg$ were a localized function we would now be done, but this clearly not true in general. However,   $Tg(x)$  decays roughly like $\langle x\rangle^{-d-1}$ or, more precisely, we have for any $x \in \R^d$ and $1 < p < \infty$ that
\begin{equation}\label{tgp}
 \| Tg \|_{L^p(B(x,1))} \lesssim_{p} \langle x\rangle^{-d-1},
\end{equation}
whence Lemma \ref{adecay} applies and the desired conclusion follows.
In order to verify \ref{tgp}, observe first from the $L^p$ boundedness of $T$ that the claim is easy for $|x| \leq 4$, so we may assume $|x| > 4$. We then use the localized mean zero nature of $g$ to decompose $g$ into a piece $g_1$ of $L^p$ norm $O_{p}( \langle x \rangle^{-d-1} )$ supported in $\R^d\setminus B(0,|x|/8)$ and mean zero, and a mean zero localized function (with quantitative bounds independent of $x$) $g_2$ supported in  $B(0,|x|/4)$.  The contribution of $g_1$ is acceptable by the $L^p$ boundedness of $T$; the contribution of $g_2$ is acceptable by using the mean zero nature of $g_2$ to write for $x'\in B(x,1)$
$$ Tg_2(x') = \int_{B(0,|x|/4)} (K(x',y) - K(x',0)) g_2(y),$$
where $K(x,y) = \Omega(\frac{x-y}{|x-y|})/|x-y|^d$ is the singular kernel of $T$, and then using the triangle inequality, the Calder\'on-Zygmund type bounds on $K$,i.e.
$$
|K(x',y) - K(x',0)|\leq C|y||x'|^{-d-1}\quad \textrm{for}\quad |y|<2|x'|,
$$
 and the localized nature of $y\mapsto g_2(y)|y|$.
\end{proof}

\begin{corollary} For any  envelope function $f$, localized function
$g$ and singular integral $T$ the application $T(f\otimes_\delta g)$
is equivalent to the multiscale function $ATf+f\otimes_\delta g'$ where
$$
A\coloneqq \int_{\R^d}g\qquad \textrm{and}\qquad g'(x)\coloneqq  T\big(g-A1_{[0,1]^d}\big)(x).
$$
\end{corollary}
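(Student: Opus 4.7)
The plan is to split the localized factor $g$ into its mean-$A$ piece carried by the unit cube and a mean-zero remainder, then apply the two halves of the argument already used inside the proof of Proposition~\ref{tf} to each piece separately.

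First, write $g = A\mathbf{1}_{[0,1]^d} + g_0$ with $g_0 \coloneqq g - A\mathbf{1}_{[0,1]^d}$. Since $g$ is localized and $\mathbf{1}_{[0,1]^d}$ has compact support, $g_0$ is again localized, and by construction $\int_{\R^d} g_0 = A - A = 0$. By linearity of $\otimes_\delta$ in the second slot and of $T$,
$$
T(f\otimes_\delta g) \;=\; A\,T\bigl(f\otimes_\delta \mathbf{1}_{[0,1]^d}\bigr) \;+\; T\bigl(f\otimes_\delta g_0\bigr).
$$

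For the first summand, Corollary~\ref{envelope=multiscale}(i) gives that $f\otimes_\delta \mathbf{1}_{[0,1]^d} - f$ is negligible; since $T$ is bounded on every $L^p(\R^d)$, it sends negligible functions to negligible functions, hence $A\,T(f\otimes_\delta \mathbf{1}_{[0,1]^d})$ is equivalent to $ATf$. For the second summand I would use the translation and dilation invariance of $T$ exactly as in the proof of Proposition~\ref{tf}: expanding $f\otimes_\delta g_0 = \sum_n [f]_\delta(n) (g_0)_{[n,\delta]}$ and interchanging $T$ with the sum (justified by $L^p$ convergence via Lemma~\ref{loc}) yields
$$
T\bigl(f\otimes_\delta g_0\bigr) \;=\; \sum_{n\in\Z^d} [f]_\delta(n)\, (Tg_0)_{[n,\delta]} \;=\; f\otimes_\delta g',
$$
where $g' = Tg_0 = T(g - A\mathbf{1}_{[0,1]^d})$, matching the definition in the statement.

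The one thing that still needs justification, and is the only real work, is that the formal expression $f\otimes_\delta g'$ actually defines a multiscale function in the sense of Definition~\ref{ms}, since $g'$ is \emph{not} a localized function (it only decays polynomially). This is precisely the obstacle resolved inside the proof of Proposition~\ref{tf}: because $g_0$ is localized and has mean zero, the standard Calder\'on–Zygmund cancellation argument gives the per-ball bound $\|Tg_0\|_{L^p(B(x,1))} \lesssim_p \langle x\rangle^{-d-1}$, i.e.\ $g'$ satisfies hypothesis \eqref{decay} of Lemma~\ref{adecay} with exponent $a = d+1 > d$. That lemma then asserts that $f\otimes_\delta g'$ is equivalent to a genuine multiscale tensor product (with the truncated-periodization $\widetilde{g'}$ supported in $[0,1]^d$), so $f\otimes_\delta g'$ is a multiscale function. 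Combining the two summands gives the claimed equivalence $T(f\otimes_\delta g) \sim ATf + f\otimes_\delta g'$.
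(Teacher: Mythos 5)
Your proof is correct and follows essentially the same route as the paper: the corollary is simply read off from the proof of Proposition~\ref{tf}, namely the split $g = A\mathbf{1}_{[0,1]^d} + g_0$, the reduction of the first piece to $ATf$ via Corollary~\ref{envelope=multiscale}(i), and the treatment of the mean-zero piece through the bound \eqref{tgp} and Lemma~\ref{adecay}, exactly as you describe.
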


Iterating Proposition \ref{multmult} and Proposition \ref{tf}, we obtain our deterministic homogenization result for iterated singular integrals:

\begin{corollary}\label{singular_equivalence}  Let $\mu = \mu_\delta$ be a multiscale function, and let $T$ be a singular integral operator.  Let $m \geq 1$, let $1 < p < \infty$, and define $\mu_m = \mu_{m,\delta}$ recursively by $\mu_{1,\delta} \coloneqq  \mu_\delta$ and $\mu_{m,\delta} \coloneqq  \mu_\delta T \mu_{m-1,\delta}$ for $m > 1$.  Then $\mu_{m,\delta}$ is bounded in $L^p(\R^d)$ uniformly in $\delta$, and is weakly convergent to a limit $\mu_{m,0} \in L^p(\R^d)$.
\end{corollary}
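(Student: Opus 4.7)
The plan is to proceed by a straightforward induction on $m$, using the two main closure properties already established — closure of the class of multiscale functions under multiplication (Proposition \ref{multmult}) and under translation- and dilation-invariant singular integrals (Proposition \ref{tf}) — together with the weak convergence result for multiscale functions (Lemma \ref{weak}).

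More precisely, I would argue as follows. At the base case $m=1$, by hypothesis $\mu_{1,\delta}=\mu_\delta$ is itself a multiscale function, so Lemma \ref{weak} immediately gives both a uniform bound in $L^p(\R^d)$ (for any $1<p<\infty$) and weak convergence to some $\mu_{1,0}\in L^p(\R^d)$. For the inductive step, suppose that $\mu_{m-1,\delta}$ is a multiscale function. Proposition \ref{tf} applied to the singular integral $T$ then shows that $T\mu_{m-1,\delta}$ is again a multiscale function, and Proposition \ref{multmult} applied to the product with $\mu_\delta$ shows that
\[
\mu_{m,\delta} \;=\; \mu_\delta\, T\mu_{m-1,\delta}
\]
is a multiscale function as well. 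Invoking Lemma \ref{weak} once more yields the desired uniform $L^p$-bound and weak convergence to a limit $\mu_{m,0}\in L^p(\R^d)$, completing the induction.

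Essentially no obstacle remains at this stage — all the work has been done in establishing Propositions \ref{multmult} and \ref{tf} (the latter being the more delicate, since it requires the decay estimate \eqref{tgp} for $Tg$ when $g$ is a mean-zero localized function, together with the reduction Lemma \ref{adecay} that allows one to replace non-compactly supported localized tails by compactly supported ones). The corollary is simply the iteration of these two closure properties. One may optionally record, using Corollary \ref{envelope=multiscale}(iii), an explicit description of the weak limit: writing out the multiscale expansion of $\mu_{m,\delta}$ as $\sum_j f_j\otimes_\delta g_j$ modulo a negligible error, the limit equals $\sum_j \left(\int_{\R^d} g_j\right) f_j$, but this is not needed for the statement of the corollary.
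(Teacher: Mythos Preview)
Your proof is correct and follows exactly the approach the paper intends: the corollary is stated immediately after Propositions \ref{multmult} and \ref{tf} as the result of ``iterating'' them, and your induction on $m$ together with Lemma \ref{weak} is precisely that iteration spelled out.
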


\begin{remark} In principle it is possible to deduce a formula for
the limit $\mu_{m,0}$ by a repeated application of Lemma \ref{weak} and Corollaries
\ref{product_equivalence} and \ref{singular_equivalence}.

\end{remark}

\section{Stochastic multiscale functions}\label{random-sec}

\newcommand{\Otilde}{\widetilde\Omega}
\newcommand{\otilde}{\widetilde\omega}

We now turn to a generalization of the above theory, in which the localized functions $g$ are allowed to be stochastic.  We fix a probability space $\Omega$, and then define a product probability space $$
\Otilde\coloneqq \Omega^{\Z^d} \coloneqq  \{ (\omega_n)_{n \in \Z^d}: \omega_n \in \Omega \hbox{ for all } n \in \Z^d \}.
$$
We often write $\otilde\coloneqq (\omega_n)_{n\in\Z^d}$ for an element of  $\Otilde.$

\begin{definition}[Stochastic localized function]\label{sms}  A \emph{stochastic localized function} is a function $g\colon \R^d \times \Omega \to \C$ (not depending on the scale parameter $\delta$), where $\Omega$ is a probability space, such that for every $1 < p < \infty$ and $k > 0$, the function $\langle x \rangle^k g(x,\omega)$ lies in $L^p(\R^d \times \Omega)$.  We view $g$ as a random function from $\R^d$ to $\C$.
\end{definition}

\begin{definition}[Stochastic multiscale tensor product]\label{smtp}  Let $f\colon \R^d \to \C$ be an envelope function and $g\colon \R^d \times \Omega \to \C$ be a localized function.  We define the \emph{multiscale tensor product} $f \otimes_\delta g \colon \R^d \times \Otilde \to \C$ of $f$ and $g$ to be the function
\begin{equation}\label{smtp-def}
 f \otimes_\delta g(x, \otilde) \coloneqq  \sum_{n \in \Z^d} [f]_\delta(n) g_{[n,\delta]}(x,\omega_n)
 \end{equation}
where $g_{[n,\delta]}$ is defined by Definition \ref{Rescale}.
One can view $f \otimes_\delta g$ as a random function from $\R^d$ to $\C$.
\end{definition}

\begin{remark} In the above definition we of course could have instead
spoken of independent copies of random functions $g(\cdot,\cdot)$ without introducing the product space $\Otilde.$ However, the product space perhaps makes things slightly more transparent, at least for readers with little background in probability.

\end{remark}

\begin{definition}[Stochastic negligible function]  A function $F = F_\delta \colon \R^d \times \Otilde \to \C$ depending on the parameter $0 < \delta < 1$ is said to be \emph{negligible} if for every $1 < p < \infty$ there exists $\eps_p > 0$ and $C_p > 0$ such that 
$$ \|F_\delta \|_{L^p(\R^d \times \Otilde)} \leq C_p \delta^{\eps_p}$$
for all $0 < \delta < 1$.  We view $F_\delta$ as a random function from $\R^d$ to $\C$.
\end{definition}

\begin{definition}[Stochastic multiscale function]\label{smf}  A function $F = F_\delta \colon \R^d \times \tilde \Omega \to \C$ depending on the parameter $0 < \delta < 1$ is said to be a \emph{stochastic multiscale function} if we can write
$$ F_\delta = \sum_{j=1}^J f_j \otimes_\delta g_j + G_\delta$$
where $J \geq 1$ is an integer, $f_1,\ldots,f_J$ are envelope functions, $g_1,\ldots,g_J$ are stochastic localized functions, and $G = G_\delta$ is a stochastic negligible function.
\end{definition}

\noindent As in the previous section, we say that functions $F_\delta$ and $F'_\delta$ are
{\it equivalent} if their difference is a stochastic negligible function.

\begin{example}  For each $n \in \Z^d$, let $\varepsilon_n \in \{-1,1\}$ be an i.i.d. collection of signs, and let $Q$ be a cube in $\R^d$.  Then the random function
$$ F_\delta(x) \coloneqq  1_Q(x) \sum_{n \in \Z^d} \varepsilon_n 1_{n\delta + [0,\delta)^d}(x)$$
is a stochastic multiscale function (setting $\Omega = \{-1,1\}$ with the uniform distribution, and $\varepsilon_n$ to be the $n^{th}$ coordinate function of $\tilde \Omega = \Omega^{\Z^d}$).
\end{example}
 
\begin{remark} In the case when the probability space $\Omega$ is trivial, stochastic multiscale functions are essentially the same as deterministic multiscale functions.
\end{remark}

Lemma \ref{loc} and its proof  immediately  generalize, so that
we have
\begin{equation}\label{loc3}
\left\| \sum_{n \in\Z^d} f_n(x,\omega_n)\right\|_{L^p(\R^d\times\Otilde)} \lesssim_{p,d,N} (\sum_{n \in \Z^d} \| \langle x \rangle_{[n,\delta]}^d f_n(x,\omega) \|_{L^p(\R^d\times\Omega)}^p)^{1/p}.
\end{equation}
In a similar vein, Lemma
 \ref{Lp-multiscale} also generalizes to the stochastic setup, one just replaces
 $\|\langle \cdot \rangle^dg\|_{L^p(\R^d)}$ by $\|\langle \cdot \rangle^dg\|_{L^p(\R^d\times\Omega)}$  
on the right hand side.  In particular, we obtain 

\begin{lemma}\label{fbound}  Let $F = F_\delta \colon \R^d \times \tilde \Omega \to \C$ be a stochastic multiscale function, and let $1 < p < \infty$.  Then $\|F_\delta \|_{L^p(\R^d \times \tilde \Omega)}$ is bounded uniformly in $0 < \delta < 1$.
\end{lemma}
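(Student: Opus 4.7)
The plan is to reduce to a single stochastic tensor product $f\otimes_\delta g$ via the triangle inequality (using that negligible terms have $L^p$-norm $O(\delta^{\eps_p})$, which is trivially uniformly bounded), and then estimate this building block by the same localization strategy already used in the deterministic case (Lemma \ref{Lp-multiscale}).

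First I would apply the stochastic localization inequality \eqref{loc3} to the defining sum \eqref{smtp-def} for $f\otimes_\delta g$. Since the coefficients $[f]_\delta(n)$ are deterministic constants, they pull out of the $L^p(\R^d\times\Omega)$-norm on each summand, giving
\begin{equation*}
\| f\otimes_\delta g \|_{L^p(\R^d\times\Otilde)}
\lesssim_{p,d} \Bigl(\sum_{n\in\Z^d} |[f]_\delta(n)|^p \,\|\langle x\rangle_{[n,\delta]}^{d}\, g_{[n,\delta]}(x,\omega)\|_{L^p(\R^d\times\Omega)}^p\Bigr)^{1/p}.
\end{equation*}
A scaling change of variable $y = x/\delta - n$ in the spatial factor turns the weighted norm on the $n$-th summand into $\delta^{d/p}\,\|\langle y\rangle^{d} g(y,\omega)\|_{L^p(\R^d\times\Omega)}$, independently of $n$; this quantity is finite by the definition of a stochastic localized function.

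At this point I would pull that common factor out of the sum and invoke the discretization bound Lemma \ref{discret}(i), namely $\|[f]_\delta\|_{\ell^p(\Z^d)}\lesssim_{p,d} \delta^{-d/p}\|f\|_{L^p(\R^d)}$. The two powers $\delta^{d/p}$ and $\delta^{-d/p}$ cancel, so we arrive at the $\delta$-independent estimate
\begin{equation*}
\| f\otimes_\delta g \|_{L^p(\R^d\times\Otilde)}
\lesssim_{p,d} \|f\|_{L^p(\R^d)}\,\|\langle\cdot\rangle^{d} g\|_{L^p(\R^d\times\Omega)},
\end{equation*}
which is precisely the stochastic analogue of Lemma \ref{Lp-multiscale} that the authors already flag in the paragraph preceding the statement.

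Summing over the finitely many tensor-product terms in the decomposition of $F_\delta$ and adding the trivially bounded negligible part completes the proof. There is really no obstacle here: the only point requiring any care is checking that \eqref{loc3} is applied with the random inputs arranged so that one factor has the decaying weight $\langle x\rangle_{[n,\delta]}^{d}$, which is exactly what the stochastic version of the localization lemma is designed to allow. The deeper work — notably the independence structure of the $\omega_n$ — is already baked into \eqref{loc3} and not needed again at this stage.
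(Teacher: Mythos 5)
Your argument is correct and is essentially the paper's own proof: the authors obtain Lemma \ref{fbound} exactly by noting that Lemma \ref{Lp-multiscale} generalizes verbatim to the stochastic setting via \eqref{loc3}, the rescaling identity producing the $\delta^{d/p}$ factor, and Lemma \ref{discret}(i) cancelling it, after which the tensor-product terms and the negligible part are summed. (One small aside: independence of the $\omega_n$ is not actually used in \eqref{loc3} — only the marginal law of each coordinate enters — but this does not affect your proof.)
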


\begin{remark}\label{generalLp} Exactly  the same proof actually shows that 
$\|F_\delta \|_{L^p(\R^d \times \tilde \Omega)}$ stays  bounded in
$\delta$ for more general functions of the form
$$
F_\delta(x,\widetilde\omega)= \sum_{n \in \Z^d} [f]_\delta(n) (g_n)_{[n,\delta]}(x, \widetilde\omega)
$$
assuming only the uniform localization $\sup_{n\in\Z^d}\|\langle \cdot \rangle^N g_n(\cdot,\cdot)\|_{L^p(\R^d\times\Otilde)}<\infty$ for $N\geq 1$ and $p\in (1,\infty).$
\end{remark}

In turn,  Proposition \ref{multmult} has the following counterpart:

\begin{proposition}\label{multmult2}  If $F' = F'_\delta$ is a deterministic multiscale function and $F = F_\delta$ is a stochastic multiscale function, then $F'F = F'_\delta F_\delta$ is a stochastic multiscale function.
\end{proposition}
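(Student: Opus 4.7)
The strategy parallels the deterministic proof of Proposition \ref{multmult}, but with one essential obstacle: the stochastic analogue of Lemma \ref{adecay} is unavailable (as the text explicitly warns), so we cannot reduce the stochastic factor $g$ to have compact support. First I would handle the negligible cases directly: if either $F'$ or $F$ is negligible, then by H\"older's inequality together with the uniform $L^p$ bound of Lemma \ref{fbound}, the product $F'F$ is negligible. By linearity and this reduction, we may then assume that $F' = f'\otimes_\delta g'$ and $F = f\otimes_\delta g$ with envelope functions $f,f'$, a deterministic localized function $g'$, and a stochastic localized function $g$.

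Expanding both tensor products and making the change of variable $n'=n+k$ in the summation arising from $F'$, I would write
\begin{equation*}
F'F(x,\widetilde\omega) = \sum_{n,k \in\Z^d} [f]_\delta(n)\,[f']_\delta(n+k)\,g_{[n,\delta]}(x,\omega_n)\,g'_{[n+k,\delta]}(x).
\end{equation*}
The crucial step is then to replace $[f']_\delta(n+k)$ by $[f']_\delta(n)$ inside the sum. For each fixed $k$ the resulting error is
\begin{equation*}
S_k(x,\widetilde\omega) \coloneqq \sum_n [f]_\delta(n)\,(\Delta_k [f']_\delta)(n)\, h^{(k)}_{[n,\delta]}(x,\omega_n), \qquad h^{(k)}(y,\omega) \coloneqq g(y,\omega)\,g'(y-k),
\end{equation*}
which I would estimate by applying the stochastic localization inequality \eqref{loc3}, H\"older's inequality in $\ell^p(\Z^d)$, and Lemma \ref{discret}(i)--(ii). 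The rapid decay of $g'$ forces $\|\langle\cdot\rangle^d h^{(k)}\|_{L^p(\R^d\times\Omega)}\lesssim_M \langle k\rangle^{-M}$ for any $M$, while Lemma \ref{discret}(ii) supplies the gain $(|k|\delta)^{\eps_{2p}}$, so choosing $M$ sufficiently large yields $\sum_k\|S_k\|_{L^p}\lesssim \delta^{\eps_{2p}}$, whence the total off-diagonal error is negligible.

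After this approximation, what survives is
\begin{equation*}
\sum_n [f]_\delta(n)\,[f']_\delta(n)\,\tilde g_{[n,\delta]}(x,\omega_n), \qquad \tilde g(y,\omega) \coloneqq g(y,\omega)\sum_{k\in\Z^d} g'(y-k).
\end{equation*}
The periodization $\sum_k g'(\cdot-k)$ is a bounded $\Z^d$-periodic function (by the localization of $g'$), so multiplying it into $g$ preserves rapid decay and all $L^p(\R^d\times\Omega)$ moments; hence $\tilde g$ is a stochastic localized function. Finally, Lemma \ref{commutator} combined with \eqref{loc3} lets me replace $[f]_\delta(n)[f']_\delta(n)$ by $[ff']_\delta(n)$ modulo a negligible remainder, exactly as in the deterministic argument, and by Lemma \ref{envprod} the product $ff'$ is an envelope function. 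The outcome is the stochastic multiscale tensor product $(ff')\otimes_\delta\tilde g$, proving that $F'F$ is a stochastic multiscale function. The main obstacle, and the point where the proof truly departs from the deterministic template, is the off-diagonal estimate in the previous paragraph: it requires the simultaneous exploitation of the H\"older regularity of $f'$ (producing the $\delta^{\eps}$ gain) and the polynomial decay of $g'$ (producing $k$-summability), since the $\omega_n$-dependence forbids any periodization of $g$ itself.
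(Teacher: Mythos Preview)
Your proof is correct and follows essentially the same route as the paper's. The paper's argument differs only in one cosmetic respect: it first applies Lemma~\ref{adecay} to the \emph{deterministic} localized function $g'$ to reduce to the case $\supp(g')\subset[0,1]^d$, which makes the decay estimate $\|\langle\cdot\rangle^d g'(\cdot-m)g\|_{L^p(\R^d\times\Omega)}\lesssim\langle m\rangle^{-a}$ immediate (since $g'(\cdot-m)g$ is then supported where $\langle y\rangle\sim\langle m\rangle$ and one uses only the localization of $g$). You instead obtain the same decay for general localized $g'$ via the Peetre-type inequality $\langle k\rangle\lesssim\langle y\rangle\langle y-k\rangle$ and H\"older, which is equally valid. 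After this point the two arguments are identical: define $\tilde g(y,\omega)=g(y,\omega)\sum_k g'(y-k)$, show the off-diagonal error $\sum_k S_k$ is negligible by combining the $\langle k\rangle^{-M}$ decay with the $(|k|\delta)^{\eps_{2p}}$ gain from Lemma~\ref{discret}(ii), and then invoke Lemma~\ref{commutator} and Lemma~\ref{envprod} to recognize the main term as $(ff')\otimes_\delta\tilde g$.
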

\begin{proof} It is enough to treat the case   $F=f\otimes_\delta g$ and $F'=f'\otimes_\delta g',$ where $f,f'$ are envelope functions, $g$
is a stochastic localized function and $g'$ is a deterministic localized function. Furthermore, by Lemma \ref{adecay} we may assume
that $g'$ has support in $[0,1]^d.$ 
Fix $p>1$ and observe that by the definition of localized functions  and H\"older's inequality we have 
\begin{equation}\label{gg}
\| \langle \cdot \rangle^d g'(\cdot-m) g (\cdot ,\cdot)\|_{L^p(\R^d\times \Otilde)}\lesssim_{g,g',p,a} \langle m \rangle^{-a}
\end{equation}
for all $a>1, $ in particular for $a=d+1$. It follows  that $\widetilde g$ is a stochastic localized function, where $\widetilde g$ is defined by $\widetilde g(x,\omega)\coloneqq \sum_{m\in\Z^k}g'(x-m)g(x,\omega)$. 
By Lemma \ref{envprod}, the proposition follows as soon as we show that 
$F_\delta' F_\delta$ is stochastically equivalent to $(ff')\otimes_\delta \widetilde g.$
Note that by Lemma \ref{commutator} and the stochastic counterpart of Lemma \ref {Lp-multiscale}  the latter function is equivalent to $H_\delta\coloneqq \sum_{n\in\Z^d}[f]_\delta(n)[f']_\delta (n)\widetilde g_{[n,\delta]}(\cdot,\omega_n)$, so it suffices to show that the difference $F_\delta'F_\delta-H_\delta$ is negligible. To that end we compute
\begin{align*}
(F_\delta F'_\delta-H_\delta)(x,\widetilde\omega) &=\sum_{m\in\Z^d}\left( \sum_{n\in\Z^d}[f]_\delta(n)\big(\Delta_m [f']_\delta(n)\big)g'_{[m+n,\delta](x)} g_{[n,\delta]}(x,\omega_n)\right)
\end{align*} 
Using \ref{gg},  first applying the stochastic version of Lemma \ref{loc}, and then Lemma \ref{discret} together with H\"older's inequality, we obtain
\begin{align*}
\|F_\delta F'_\delta-H_\delta\|_{L^p(\R^d\times\widetilde\Omega)} &\leq \delta^{d/p}\sum_{m\in\Z^d}\|[f]_\delta \Delta_m [f']_\delta\|_{\ell^p(\Z^d)} \langle m \rangle^{-d-1} \\
&\lesssim_{f,f',p,g,g'}\delta^{d/p}\sum_{m\in\Z^d} \langle m \rangle^{-d-1} |m\delta|^{\varepsilon_{2p}}\delta^{-d/2p}\delta^{-d/2p}\\
&\lesssim\delta^{\varepsilon_{2p}} .
\end{align*}
\end{proof}

We can now state our main technical result about stochastic multiscale functions: 

\begin{theorem}[Main estimate on stochastic multiscale functions]\label{mainthm} Let $m \geq 1$, let $\mu^{(1)} = \mu^{(1)}_\delta,\ldots,\mu^{(m)} = \mu^{(m)}_\delta$ be stochastic multiscale functions, and let $T_1,\ldots,T_{m-1}$ be singular integral operators.  Define $\mu_m = \mu_{m,\delta} \colon \R^d \times \tilde \Omega \to \C$ recursively by 
\begin{equation}\label{mum-def}
\mu_{1,\delta} \coloneqq  \mu_\delta^{(1)}; \quad \mu_{m,\delta} \coloneqq  \mu_\delta^{(m)} T_{m-1} \mu_{m-1,\delta}\;\; \hbox{ for } m > 1.
\end{equation}
Then $\mu_{m,\delta}$ is bounded in $L^p(\R^d \times \tilde \Omega)$ uniformly in $0 < \delta < 1$, for any $p\in (1,\infty).$  Furthermore, there exists a (deterministic) limit function $\mu_{m,0} \in L^p(\R^d)$ and $\eps > 0$ with the property that given any test function $\phi \in C^\infty_0(\R^d)$ we have
\begin{equation}\label{conc}
 \P( |\int_{\R^d} [\mu_{m,\delta}(x,\omega) - \mu_{m,0}(x)] {\phi(x)}\ dx| \geq \delta^{\eps} | )
\lesssim_{m,d,\eps,\mu^{(1)},\ldots,\mu^{(m)},T_1,\ldots,T_{m-1},\phi} \delta^\eps
\end{equation}
where $\P$ denotes the probability measure on $\tilde \Omega$.
\end{theorem}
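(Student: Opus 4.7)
The plan is to first establish the uniform $L^p$-bound by an induction on $m$. Given the inductive hypothesis $\|\mu_{m-1,\delta}\|_{L^{2p}(\R^d \times \widetilde\Omega)} \lesssim 1$ (applied with exponent $2p$ in place of $p$), combining the $L^{2p}$-boundedness of the Calder\'on-Zygmund operator $T_{m-1}$, the bound $\|\mu^{(m)}_\delta\|_{L^{2p}(\R^d \times \widetilde\Omega)} \lesssim 1$ from Lemma \ref{fbound}, and H\"older's inequality yields $\|\mu_{m,\delta}\|_{L^p(\R^d \times \widetilde\Omega)} \lesssim 1$; the base case $m=1$ is Lemma \ref{fbound} itself. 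By multilinearity in $\mu^{(1)}_\delta,\ldots,\mu^{(m)}_\delta$ together with the $L^p$-bounds just established, I then reduce the convergence claim \eqref{conc} to the case where each $\mu^{(\ell)}_\delta = f_\ell \otimes_\delta g_\ell$ is a single stochastic multiscale tensor product: any term in the multilinear expansion of $\mu_{m,\delta}$ that contains a stochastic negligible factor $G^{(\ell)}_\delta$ is itself $O(\delta^{\varepsilon_p})$ in $L^p(\R^d \times \widetilde\Omega)$, and hence contributes only $O(\delta^{\varepsilon_p})$ to $X_\delta \coloneqq \int \mu_{m,\delta}\phi$ and to its second moment.

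In this reduced setting, Definition \ref{smtp} gives the explicit expansion
$$\mu_{m,\delta}(x,\widetilde\omega) = \sum_{(n_1,\ldots,n_m) \in (\Z^d)^m} \prod_{\ell=1}^m [f_\ell]_\delta(n_\ell)\, K_{\delta,\vec n}(x;\omega_{n_1},\ldots,\omega_{n_m}),$$
with kernel
$$K_{\delta,\vec n}(x;\alpha_1,\ldots,\alpha_m) \coloneqq g_{m,[n_m,\delta]}(x,\alpha_m)\, T_{m-1}\Bigl(g_{m-1,[n_{m-1},\delta]}(\cdot,\alpha_{m-1})\cdots T_1 g_{1,[n_1,\delta]}(\cdot,\alpha_1)\Bigr)(x).$$
By translation and dilation invariance of each $T_\ell$, one has $T_\ell\bigl(g_{\ell,[n_\ell,\delta]}(\cdot,\alpha_\ell)\bigr) = \bigl(T_\ell g_\ell(\cdot,\alpha_\ell)\bigr)_{[n_\ell,\delta]}$, which has Calder\'on-Zygmund polynomial decay around $n_\ell\delta$ as in the proof of Proposition \ref{tf}; iterating, $K_{\delta,\vec n}$ decays rapidly in the consecutive gaps $|n_{\ell+1}-n_\ell|$, keeping the multi-index sum absolutely convergent. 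The crucial observation is that when $n_1,\ldots,n_m$ are pairwise distinct, the $\omega_{n_\ell}$ are independent, so $\E K_{\delta,\vec n}$ factorizes and coincides with the kernel obtained by replacing each random $g_\ell$ by its deterministic expectation $\bar g_\ell(x)\coloneqq \E g_\ell(x,\omega)$.

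For the bias $|\E X_\delta-\mu_{m,0}(\phi)|$ I would split the sum into the \emph{diagonal-free} part (all $n_\ell$ distinct) and the \emph{diagonal} parts (some coincidence $n_i=n_j$). Adding and subtracting the full deterministic sum turns the diagonal-free expectation into $\int \bar\mu_{m,\delta}\phi$ minus a diagonal correction, where $\bar\mu_{m,\delta} \coloneqq \bar\mu^{(m)}_\delta T_{m-1}\cdots T_1\bar\mu^{(1)}_\delta$ with $\bar\mu^{(\ell)}_\delta \coloneqq f_\ell \otimes_\delta \bar g_\ell$ is a \emph{deterministic} multiscale function (Jensen's inequality ensures $\bar g_\ell$ is a deterministic localized function). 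Corollary \ref{singular_equivalence} together with the polynomial rate from Lemma \ref{weak} then identifies the limit: $\int \bar\mu_{m,\delta}\phi \to \mu_{m,0}(\phi)$ with rate $\delta^\varepsilon$. The diagonal contributions still need control: each forced coincidence $n_i=n_j$ eliminates one free index and thus saves a factor of $\delta^d$ against the naive $m$-fold sum, while $\|[f_\ell]_\delta\|_{\ell^p}\lesssim \delta^{-d/p}$ (Lemma \ref{discret}) and the rapid spatial decay of the bumps (propagated through the $T_\ell$) convert this saving into an overall $O(\delta^\varepsilon)$ bound. I expect this diagonal bookkeeping to be the main technical obstacle: each $T_\ell$ smears localized bumps into slowly decaying tails, and one must carefully interweave the H\"older continuity of the $f_\ell$, the Calder\'on-Zygmund tail bounds used in Proposition \ref{tf}, and the bump decay of the $g_\ell$ to preserve the dimension count; this is plausibly the ``considerable effort'' alluded to in Section \ref{ss:sing_int_homogenization}.

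For the variance, I expand $\E|X_\delta-\E X_\delta|^2$ as a double sum over $(\vec n,\vec n')\in (\Z^d)^{2m}$. The covariance of $K_{\delta,\vec n}$ and $K_{\delta,\vec n'}$ vanishes unless the index sets $\{n_1,\ldots,n_m\}$ and $\{n_1',\ldots,n_m'\}$ share at least one element, by independence of the $\omega_n$ across distinct $n$. Each enforced coincidence again costs a factor of $\delta^d$ relative to the $2m$-fold unconstrained sum, so the same localized-function and singular-integral bounds used for the diagonal bias give $\mathrm{Var}(X_\delta)\lesssim \delta^{\varepsilon}$ for some $\varepsilon>0$. Chebyshev's inequality
$$\P\bigl(|X_\delta-\mu_{m,0}(\phi)|\ge \delta^\varepsilon\bigr) \le \delta^{-2\varepsilon}\Bigl(|\E X_\delta-\mu_{m,0}(\phi)|^2+\mathrm{Var}(X_\delta)\Bigr),$$
combined with the bias and variance estimates (after possibly shrinking $\varepsilon$), produces the conclusion \eqref{conc}.
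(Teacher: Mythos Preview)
Your overall architecture---uniform $L^p$ bound, reduction to tensor products, Chebyshev with separate bias and variance control---matches the paper, and your variance heuristic (covariance vanishes unless $\{n_\ell\}\cap\{n'_\ell\}\neq\emptyset$, one forced coincidence saves $\delta^d$) is essentially what the paper does, made precise through a pointwise size estimate $\E|X_{\delta,\vec n}|^p \lesssim \delta^{dp}\prod K_0(n_{i+1}-n_i)^p$ with $K_0(n)=\langle n\rangle^{-d}$, followed by $\ell^p$ bookkeeping.

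The gap is in your bias analysis: the diagonal contributions to $\E X_\delta$ are \emph{not} $O(\delta^\varepsilon)$; they are $O(1)$ and form part of the limit $\mu_{m,0}$. Already for $m=2$, the term with $n_1=n_2=n$ is
\[
\sum_{n}[f_1]_\delta(n)[f_2]_\delta(n)\int\bigl(\E\, g_2(\cdot,\omega)T_1 g_1(\cdot,\omega)\bigr)_{[n,\delta]}\phi
\;\approx\;\int (f_1f_2)\otimes_\delta\bigl(\E\,g_2 T_1 g_1\bigr)\,\phi,
\]
which by Lemma~\ref{weak} converges to $\bigl(\int\E\,g_2 T_1 g_1\bigr)\int f_1f_2\,\phi$. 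Your proposed limit via $\bar\mu_{m,\delta}$ carries instead the constant $\int \bar g_2 T_1\bar g_1$, and the discrepancy $\int[\E(g_2T_1g_1)-\bar g_2 T_1\bar g_1]$ is generically nonzero whenever $g_1,g_2$ are genuinely random. The dimension count fails because eliminating one free index does drop a factor $\delta^{-d}$ from the number of tuples, but it simultaneously collapses one $K_0$-link to $K_0(0)=1$, leaving the sum at $O(1)$. An \emph{internal} coincidence (within a single $m$-tuple) is thus fundamentally different from the \emph{external} coincidence between the two $m$-tuples in the variance; only the latter genuinely gains $\delta^d$.

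What the paper actually does for the first moment is partition $(\Z^d)^m$ according to the coloring $c$ recording which $n_i$ coincide, and then prove (Proposition~\ref{mainprop}) that for \emph{every} coloring the corresponding sum is itself a deterministic multiscale function, hence converges weakly with rate $\delta^\varepsilon$ to some limit $z_c$; the function $\mu_{m,0}$ is assembled from all the $z_c$, not just the free one. The proof proceeds by induction on the number of colors, with an easy ``split'' case where the coloring separates $\{1,\ldots,m\}$ into two intervals, and a harder ``non-split'' case (crossing partitions) handled via a combinatorial lemma showing that $\sum_{\vec r: r_{c(m)}=0}\prod_i K_\alpha(r_{c(i+1)}-r_{c(i)})<\infty$ for small $\alpha>0$. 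This, rather than a dimension count, is the ``considerable effort'' you anticipated.
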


The proof of this theorem is lengthy and shall occupy the next section.  Later, in Section \ref{se:qchomogenization} we shall give applications to random Beltrami equations through the following immediate consequence:

\begin{corollary}[Almost sure convergence]\label{co:main}
Let $\mu_{m,\delta}$ be as in Theorem \ref{mainthm} and assume that
$a\in (0,1).$ Then almost surely
$\mu_{m,a^{j}}\to \mu_{m,0}$ weakly in $L^p(\R^d)$ as $j\to\infty$, for all $p\in (1,\infty)$.

\end{corollary}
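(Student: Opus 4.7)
The plan is to deduce this corollary from Theorem~\ref{mainthm} by a Borel--Cantelli argument along the sparse subsequence $\delta_j = a^j$, applied to a countable dense family of test functions.

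First, I would fix a countable set $D \subset C^\infty_0(\R^d)$ that is simultaneously dense in $L^{q}(\R^d)$ for every $q\in(1,\infty)$ (for instance, smooth bumps with rational coefficients centered on dyadic-rational grids). For each $\phi\in D$, Theorem~\ref{mainthm} applied with $\delta = a^j$ yields
\begin{equation*}
\P\!\left(\left|\int_{\R^d}[\mu_{m,a^j}(\cdot,\omega)-\mu_{m,0}]\phi\,dx\right|\geq a^{j\eps}\right)\lesssim_\phi a^{j\eps}.
\end{equation*}
Since $a \in (0,1)$, the right-hand side is summable in $j$, so the Borel--Cantelli lemma produces a null set $N_\phi$ off of which $\int \mu_{m,a^j}(\cdot,\omega)\phi\,dx \to \int \mu_{m,0}\phi\,dx$ as $j\to\infty$. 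Taking the countable union $N := \bigcup_{\phi\in D}N_\phi$, which is still null, on the full-measure event $N^c$ this convergence holds simultaneously for every $\phi\in D$.

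To upgrade convergence against $D$ to genuine weak convergence in $L^p(\R^d)$, I would combine the above with the uniform moment bound $\sup_j\E\|\mu_{m,a^j}\|_{L^p(\R^d)}^p\lesssim_p 1$ furnished by Theorem~\ref{mainthm}. Once an almost-sure uniform bound $C_p(\omega):=\sup_j\|\mu_{m,a^j}(\cdot,\omega)\|_{L^p(\R^d)}<\infty$ is secured, the density of $D$ in $L^{p'}(\R^d)$ together with the triangle estimate
\begin{equation*}
\left|\int[\mu_{m,a^j}-\mu_{m,0}]\psi\,dx\right|\leq\left|\int[\mu_{m,a^j}-\mu_{m,0}]\phi\,dx\right|+(C_p(\omega)+\|\mu_{m,0}\|_{L^p})\|\psi-\phi\|_{L^{p'}}
\end{equation*}
for $\psi\in L^{p'}$ and an approximating $\phi\in D$ would extend the limit to all of $L^{p'}(\R^d)$. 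Because $D$ is simultaneously dense in every $L^{p'}$, the single null set $N$ then works for all $p\in(1,\infty)$ at once.

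The main obstacle in the plan is precisely the passage from the uniform expectation bound to the almost-sure uniform $L^p$ bound, since a naive Markov--Borel--Cantelli at a fixed threshold $M$ only produces non-summable tail probabilities of order $M^{-p}$. One natural resolution is to apply Markov at a mildly $j$-growing threshold $M_j = a^{-j\eta}$ (for small $\eta>0$), whose tails $a^{j\eta p}$ are summable, yielding via Borel--Cantelli the almost-sure polynomial-growth bound $\|\mu_{m,a^j}\|_{L^p}\leq a^{-j\eta}$ eventually; this slow growth can then be absorbed into the triangle estimate by carrying out the Borel--Cantelli step simultaneously over a polynomially-growing subfamily of $D$, which is permitted precisely because the failure probability in Theorem~\ref{mainthm} decays polynomially in $\delta$. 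Alternatively, on $N^c$ the functionals $\phi\mapsto\int\mu_{m,a^j}(\cdot,\omega)\phi\,dx$ are pointwise bounded on the dense set $D$, and a measurable version of the uniform boundedness principle combined with the expectation bound provides the required almost-sure control.
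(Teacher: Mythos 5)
Your proposal follows essentially the same route as the paper: apply the failure-probability bound \eqref{conc} of Theorem \ref{mainthm} with $\delta=a^j$, note that $\sum_j a^{j\eps}<\infty$ so Borel--Cantelli gives almost sure convergence of $\int\mu_{m,a^j}\phi$ for each test function, take a countable family of test functions dense in every $L^{p'}(\R^d)$, and upgrade to weak convergence using uniform $L^p$ bounds. The only divergence is in the last step: the paper simply invokes the uniform boundedness of $\mu_{m,a^j}$ in $L^p$ (the bound of Theorem \ref{mainthm}/Lemma \ref{uniformLp} in $L^p(\R^d\times\widetilde\Omega)$) and does not perform the extra manoeuvres you describe, whereas you correctly observe that passing from a moment bound to an almost sure bound uniform in $j$ is not a one-line Markov--Borel--Cantelli argument. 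Of your two proposed patches, be aware that the second one is not valid as stated: the uniform boundedness principle requires pointwise boundedness on all of $L^{p'}$ (or at least a non-meager set), and pointwise boundedness of the functionals on a dense set $D$ does not imply $\sup_j\|\mu_{m,a^j}(\cdot,\omega)\|_{L^p}<\infty$. The first patch (thresholds $M_j=a^{-j\eta}$ combined with a $j$-dependent family of approximating test functions) is workable in spirit, but as written it needs control of how the implied constant in \eqref{conc} depends on $\phi$, which the statement of Theorem \ref{mainthm} does not provide; one would have to track this dependence through the proof, or else establish the almost sure uniform $L^p$ bound directly (which is what the paper implicitly does, and is immediate in the quasiconformal application where $|\mu_\delta|\leq k<1$).
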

\begin{proof} The almost sure weak convergence, when tested against 
a test function follows immediately from Theorem \ref{mainthm}
and the Borel-Cantelli lemma. For the rest it is enough to recall
that the sequence $\mu_{m,a^{j}}$ is uniformly bounded in each 
$L^p(\R^d)$ and that one may pick a countable set of test functions that is dense in every $L^p(\R^d)$, with $1<p<\infty.$
\end{proof}

Theorem \ref{th:main22} then follows from Theorem \ref{mainthm} and Corollary \ref{co:main}.

We finish this section by observing that in the case where $g$ is bounded with  quick  decay as $x\to\infty$, our definition of a  deterministic multiscale function is equivalent to the product of
the envelope function and the `bump field defined via $g$'.
\begin{lemma}\label{prod3}
Assume that $g$  is bounded and has quick decay in the first variable:
$$
|g(x,\omega)|\leq C_N \langle x \rangle^{-N}\quad \textrm{for all}\;\; N\geq 1\;\;\textrm{and}\;\; x\in\R^d,\; \omega\in\Omega.
$$
Then for any envelope function $f$ the stochastic multiscale tensor product $f\otimes_\delta g$ is equivalent to the function $\displaystyle f(x)\Big(\sum_{n\in \Z^d}g_{[n,\delta]}(x,\omega_n)\Big).$
\end{lemma}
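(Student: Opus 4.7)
The plan is to show that the difference
$$D_\delta(x,\otilde) := f\otimes_\delta g(x,\otilde) - f(x)\sum_{n\in\Z^d}g_{[n,\delta]}(x,\omega_n)$$
is a stochastic negligible function. The strategy has two moves: first, rewrite $D_\delta$ as an integral in which only one random variable appears per integration point, so that the uniform pointwise envelope on $g$ can be used to strip off the randomness; then perform an $L^p_x$ estimate via the H\"older continuity of $f$.

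For the first move, the cubes $R_n := n\delta+[0,\delta)^d$ partition $\R^d$, and for each $y$ there is a unique $n(y)\in\Z^d$ with $y\in R_{n(y)}$. Writing $[f]_\delta(n)-f(x) = \delta^{-d}\int_{R_n}(f(y)-f(x))\,dy$ and interchanging the sum over $n$ with the $dy$ integral, one obtains the identity
$$D_\delta(x,\otilde) = \frac{1}{\delta^d}\int_{\R^d}(f(y)-f(x))\,g_{[n(y),\delta]}(x,\omega_{n(y)})\,dy.$$
Since $|y/\delta - n(y)|\leq\sqrt{d}$, the uniform decay hypothesis $|g(z,\omega)|\leq C_N\langle z\rangle^{-N}$ gives, after elementary comparison and the substitution $h = y-x$, the deterministic (i.e.\ $\otilde$-independent) pointwise bound
$$|D_\delta(x,\otilde)| \leq \frac{C_N}{\delta^d}\int_{\R^d}|\Delta_h f(x)|\,\langle h/\delta\rangle^{-N}\,dh.$$

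For the second move, I fix $p\in(1,\infty)$, take $L^p_x$-norms, and apply Minkowski's inequality to pull the $dh$ integral outside. It then suffices to control $\delta^{-d}\int\|\Delta_h f\|_{L^p(\R^d)}\langle h/\delta\rangle^{-N}\,dh$. Splitting at $|h|=1$ and using $\|\Delta_h f\|_{L^p}\lesssim |h|^{\alpha}$ for $|h|\leq 1$ (available since $f\in\Lambda^{\alpha,p}(\R^d)$ for some $\alpha>0$) together with the trivial bound $\|\Delta_h f\|_{L^p}\leq 2\|f\|_{L^p}$ for $|h|>1$, the substitution $h=\delta z$ with $N$ chosen large enough produces $\|D_\delta\|_{L^p(\R^d)}\lesssim\delta^{\alpha}$. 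Since this estimate is uniform in $\otilde$, it holds verbatim in $L^p(\R^d\times\Otilde)$, proving the required negligibility.

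The main obstacle, or rather the key insight, is the integral representation in the first step: the naive approach would apply the stochastic analogue of Lemma \ref{loc} directly to $D_\delta = \sum_n ([f]_\delta(n)-f(x))g_{[n,\delta]}(x,\omega_n)$, but since $f$ need not be compactly supported, the resulting $\ell^p(\Z^d)$-sum of per-index contributions is delicate to bound without losing the small $\delta^{\eps}$ factor. Folding the sum into a single integral sidesteps this by ensuring that, for each pair $(x,y)$, precisely one summand of the original sum contributes, carrying a single rapidly-decaying factor with a uniform-in-$\omega$ pointwise envelope.
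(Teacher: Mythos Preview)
Your proof is correct and takes a genuinely different route from the paper's. The paper first replaces $f$ by its step-function discretization $f_\delta \coloneqq f\otimes_\delta 1_{[0,1)^d}$ (using that the bump field is uniformly bounded together with Corollary~\ref{envelope=multiscale}(i)), and then compares $f\otimes_\delta g$ with $f_\delta\cdot(\text{bump field})$ by rewriting the difference as a double sum $\sum_k\sum_n(\Delta_k[f]_\delta(n))\,1_{(n+k)\delta+[0,\delta)^d}\,g_{[n,\delta]}$ and applying the stochastic localization inequality~\eqref{loc3} together with Lemma~\ref{discret}(ii).

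Your approach is more direct and arguably more elementary in this setting: by collapsing the sum into a single integral over $y$, you exploit the \emph{uniform pointwise} decay hypothesis on $g$ to obtain a deterministic majorant immediately, and then only need Minkowski and the continuous H\"older estimate on $\Delta_h f$. This avoids both the intermediate discretization step and the localization lemma. The paper's approach, on the other hand, stays within the discrete machinery developed in Sections~\ref{dmd-sec}--\ref{random-sec} and would generalize more readily to stochastic localized functions $g$ satisfying only $L^p(\R^d\times\Omega)$-weighted bounds rather than the uniform pointwise bound assumed here. Your remark in the final paragraph slightly overstates the difficulty of the ``naive'' approach---the paper carries it out successfully---but it is true that the paper needs the preliminary replacement $f\mapsto f_\delta$ to make that line work, which your integral representation bypasses entirely.
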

\begin{proof} 
Observe first that by the  decay assumption $g$ is a stochastic localized function and the bump field is uniformly
bounded:
$$
|\sum_{n\in \Z^d}g_{[n,\delta]}(x,\omega_n)|\leq C\quad \textrm{for all}\;\; x\in\R^d.
$$
Combining this with Corollary \ref{envelope=multiscale}(i) we
see that the product $\displaystyle f(x)\Big(\sum_{n\in \Z^d}g_{[n,\delta]}(x,\omega_n)\Big)$ is equivalent to
$\displaystyle f_\delta(x)\Big(\sum_{n\in \Z^d}g_{[n,\delta]}(x,\omega_n)\Big)$ ,
where $$
f_\delta\coloneqq f\otimes_\delta 1_{[0,1)^d}=\sum_{n\in \Z^d}
[f]_\delta(n) 1_{n\delta+[0,\delta)^d}.
$$
By using  for each $n\in\Z^d$  the trivial identities 
$[f]_\delta(n)=\sum_{k\in\Z^d}[f]_\delta(n) 1_{(n+k)\delta+[0,\delta)^d}$  and $f_\delta=\sum_{k\in \Z^d}
[f]_\delta(n+k) 1_{(n+k)\delta+[0,\delta)^d}$ we may use  \eqref{loc3} to estimate
for any $p\in (1,\infty)$ 
\begin{align*}
Q_p\coloneqq &\|f\otimes_\delta g - f_\delta\Big(\sum_{n\in \Z^d}g_{[n,\delta]}(x,\omega_n)\Big)\|_{L^p(\R^d\times \widetilde\Omega)}\\
=&\left\|\sum_{k\in \Z^d}\left(\sum_{n\in \Z^d}\big(
\Delta_k [f]_\delta (n)\big) 1_{(n+k)\delta+[0,\delta)^d}
(x)\big)g_{[n,\delta]}(x,\omega_n)\right)
\right\|_{L^p(\R^d\times \widetilde\Omega)}\\
\lesssim&\sum_{k\in \Z^d}\left(\sum_{n\in \Z^d}\big|
\Delta_k [f]_\delta (n)\big|^p\left\| 1_{(n+k)\delta+[0,\delta)^d}
(x)\langle x \rangle_{[n,\delta]}^d g_{[n,\delta]}(x,\omega_n)\right\|^p_{L^p(\R^d\times\widetilde\Omega)}\right)^{1/p}\\
\lesssim &
\sum_{k\in \Z^d}
\|\Delta_k [f]_\delta \|_{\ell^p(\Z^d)}
\| 1_{k\delta+[0,\delta)^d}
(x) \langle x \rangle_{[0,\delta]}^d g_{[0,\delta]}(x,\omega)
\|_{L^p(\R^d\times \Omega)}\  .
\end{align*}
By assumption $\| 1_{k\delta+[0,\delta)^d}
(x)\langle x \rangle_{[0,\delta]}^d g_{[0,\delta]}(x,\omega)
\|_{L^p(\R^d\times \Omega)}\lesssim \langle k \rangle^{-2d}\delta^{d/p}$,
and hence an application of Lemma \ref{discret} yields that
$$
Q_p\lesssim
\sum_{k\in \Z^d}\frac{(k\delta)^{\varepsilon_p}\delta^{-d/p}\delta^{d/p}}{\langle k \rangle^{2d}}\lesssim\delta^{\varepsilon_p},
$$
which completes the proof.
\end{proof}

\section{Proof of Theorem \ref{mainthm}}\label{sec:proof}

The purpose of this section is to prove Theorem \ref{mainthm}.  To abbreviate the notation we allow all implied constants to depend on $m, d, T_1,\ldots,T_{m-1}, \mu^{(1)},\ldots,\mu^{(m)},\phi$.

We first settle the claim that $\mu_{m,\delta}$ is uniformly bounded in $L^p(\R^d \times \widetilde\Omega)$, stating this result as a separate lemma:
\begin{lemma}\label{uniformLp} Define $\mu_{m,\delta}(\cdot, \omega)$ as in
Theorem \ref{mainthm}. Then $\|\mu_{m,\delta}\|_{L^p(\R^d \times \widetilde\Omega)}$
is uniformly bounded in $\delta >0.$
\end{lemma}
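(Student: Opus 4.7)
The plan is to prove Lemma \ref{uniformLp} by induction on $m$, with the inductive hypothesis being: for every $p \in (1,\infty)$, the norm $\|\mu_{m,\delta}\|_{L^p(\R^d \times \widetilde\Omega)}$ is bounded uniformly in $\delta \in (0,1)$. This formulation is important because the inductive step will consume two factors of the exponent via Hölder's inequality, so we need control at arbitrary $L^p$ rather than at a fixed exponent.

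For the base case $m = 1$, we have $\mu_{1,\delta} = \mu^{(1)}_\delta$, a stochastic multiscale function, and the uniform $L^p$ bound is exactly the content of Lemma \ref{fbound}. For the inductive step, suppose the claim has been established for $\mu_{m-1,\delta}$ and fix $p \in (1,\infty)$. First I would apply the standard Calderón--Zygmund $L^{2p}(\R^d)$-boundedness of the singular integral operator $T_{m-1}$, pointwise in the probability variable $\widetilde\omega \in \widetilde\Omega$, to obtain
\begin{equation*}
\|T_{m-1} \mu_{m-1,\delta}\|_{L^{2p}(\R^d \times \widetilde\Omega)}^{2p} = \int_{\widetilde\Omega} \|T_{m-1} \mu_{m-1,\delta}(\cdot,\widetilde\omega)\|_{L^{2p}(\R^d)}^{2p}\, d\P \lesssim_{p} \|\mu_{m-1,\delta}\|_{L^{2p}(\R^d \times \widetilde\Omega)}^{2p},
\end{equation*}
which is bounded uniformly in $\delta$ by the inductive hypothesis applied at exponent $2p$. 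Separately, Lemma \ref{fbound} applied to the stochastic multiscale function $\mu^{(m)}_\delta$ gives uniform boundedness of $\|\mu^{(m)}_\delta\|_{L^{2p}(\R^d \times \widetilde\Omega)}$.

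To close the induction, I apply Hölder's inequality on the product probability space $\R^d \times \widetilde\Omega$ with conjugate exponents $(2,2)$:
\begin{equation*}
\|\mu_{m,\delta}\|_{L^p(\R^d \times \widetilde\Omega)} = \|\mu^{(m)}_\delta \cdot T_{m-1} \mu_{m-1,\delta}\|_{L^p(\R^d \times \widetilde\Omega)} \leq \|\mu^{(m)}_\delta\|_{L^{2p}(\R^d \times \widetilde\Omega)}\, \|T_{m-1} \mu_{m-1,\delta}\|_{L^{2p}(\R^d \times \widetilde\Omega)}.
\end{equation*}
Both factors on the right are bounded uniformly in $\delta$ by the previous paragraph, which completes the inductive step and hence the lemma.

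I do not anticipate any serious obstacle here: correlations between $\mu^{(m)}_\delta$ and $\mu_{m-1,\delta}$ (which share the underlying product space $\widetilde\Omega$) are irrelevant, since Hölder's inequality needs no independence, and the $L^p$-boundedness of $T_{m-1}$ is entirely pointwise in $\widetilde\omega$. The only mild subtlety is remembering to phrase the induction hypothesis uniformly in $p$, so that one has room to raise the exponent to $2p$ at each step; this is purely a bookkeeping issue.
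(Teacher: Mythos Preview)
Your proof is correct and follows essentially the same strategy as the paper: induction on $m$, H\"older's inequality to split off the outermost multiplier, pointwise-in-$\widetilde\omega$ application of the $L^q$-boundedness of $T_{m-1}$, and Lemma \ref{fbound} for the base input. The only cosmetic difference is that the paper carries out the induction pointwise in $\widetilde\omega$ to obtain the explicit product bound $\|\mu_{m,\delta}\|_{L^p} \lesssim_p \prod_{j=1}^m \|\mu_\delta^{(j)}\|_{L^{mp}}$ (splitting with exponent $mp$ rather than your $2p$) before integrating over $\widetilde\Omega$, whereas you work on the product space throughout; neither choice affects the argument.
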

\begin{proof}
 Fix $\omega \in \tilde \Omega$.  Since $T_1,\ldots,T_{m-1}$ are bounded on $L^q(\R^d)$ for every $1 < q < \infty$, we see from H\"older's inequality and induction on $m$ that
\begin{equation}\label{integralholder}
 \| \mu_{m,\delta}(\cdot, \omega) \|_{L^p(\R^d)} \lesssim_p \prod_{j=1}^m \| \mu_{\delta}^{(j)}(\cdot, \omega) \|_{L^{mp}(\R^d)}.
\end{equation}
Namely, if this is true for the value $ m-1$, we choose $q>1$ with
$\frac{1}{q}+\frac{1}{mp}=\frac1p$ and use the $L^q$-boundedness of $T_{m-1}$
to estimate
\begin{align*}
 \| \mu_{m,\delta}(\cdot, \omega) \|_{L^p(\R^d)}
\leq &\|\mu^{(m)}_\delta (\cdot,\omega)\|_{L^{mp}(\R^d)}\|T_{m-1}\mu_{\delta,m-1} (\cdot,\omega)\|_{L^q(\R^d)}\\
 \lesssim &\|\mu^{(m)}_\delta (\cdot,\omega)\|_{L^{mp}(\R^d)}\|\mu_{\delta,m-1} (\cdot,\omega)\|_{L^q(\R^d)}\\
 \lesssim &\|\mu^{(m)}_\delta (\cdot,\omega)\|_{L^{mp}(\R^d)}
 \prod_{j=1}^{m-1} \|\mu^{(j)}_\delta (\cdot,\omega)\|_{L^{(m-1)q}(\R^d)},
\end{align*}
and as $(m-1)q=mp, $ the desired inequality \eqref{integralholder} with  index $m$ follows. 
Finally  Fubini's theorem and another application of H\"older's inequality yields
\begin{equation}\label{stochintegralholder}
 \| \mu_{m,\delta} \|_{L^p(\R^d \times \tilde \Omega)} \lesssim_p \prod_{j=1}^m \| \mu_{\delta}^{(j)} \|_{L^{mp}(\R^d  \times \tilde \Omega)}.
\end{equation}
The claim then follows from Lemma \ref{fbound}.
\end{proof}
\begin{remark}\label{generalLp2}
The above bound also holds if the stochastic multiscale functions $\mu_\delta^{(j)}$
are replaced by more general ones described in Remark \ref{generalLp}.
\end{remark}

Next, by multilinearity, we may assume that each of the stochastic multiscale functions $\mu^{(j)}_\delta$ is either a stochastic negligible function, or is a stochastic multiscale tensor product.  If at least one of the $\mu^{(j)}_\delta$ is stochastically negligible, we see by
\eqref{stochintegralholder} that for every $1 < p < \infty$ there exists $\eps > 0$ such that
$$ \| \mu_{m,\delta} \|_{L^p(\R^d \times \tilde \Omega)} \lesssim_{p,\eps} \delta^\eps.$$
By applying $p=2$ (say) and setting $\mu_{m,0} :\equiv 0$ we easily obtain the claim \eqref{mum-def}.

We may thus assume that for each $1 \leq j \leq m$ we have
\begin{equation}\label{muj-delta}
\mu^{(j)}_\delta = f_j \otimes_\delta g_j
\end{equation}
for some envelope functions $f_j$ and some stochastic localized functions $g_j$.  We allow all implied constants to depend on $f_j$ and $g_j$.

Next, we shall make the qualitative assumption that the envelope functions $f_j$ are compactly supported in $\R^d$.  This is purely in order to justify certain interchanges of summation, as now all sums in the multiscale functions are finite for a fixed $\delta>0$. At the very end of the proof we describe how this assumption can be dispensed with by a standard limit argument.

For the next reduction, we observe that it suffices to show for each $\phi \in C^\infty_0(\R^d)$ that there exists a limit $z=z_\phi \in \C$ such that
$$ \P\left( \left|\int_{\R^d} \mu_{m,\delta} \phi(x)\ dx - z\right| \geq \delta^{\eps} | \right)
\lesssim_{\eps} \delta^\eps$$
for some $\eps > 0$ independent of $\phi, \delta$.
Indeed, the map $\phi \mapsto z_\phi$ is then a continuous (by the uniform boundedness of  $\| \mu_{m,\delta}(\cdot, \omega) \|_{L^p(\R^d)}$), linear, and densely defined functional on $L^p(\R^d)$ for every $1 < p < \infty$, and can then be used to reconstruct $\mu_{m,0}$ by duality.

By \eqref{muj-delta} and \eqref{smtp-def} we can write
$$ \mu^{(j)}_\delta(x,\omega) = \sum_{n_j \in \Z^d} \mu^{(j)}_{\delta,n_j}(x,\omega_{n_j})$$
where
\begin{equation}\label{muj-nj}
\mu^{(j)}_{\delta,n_j}(x,\omega_{n_j}) \coloneqq  [f_{j}]_\delta(n_j) (g_j)_{\delta,n_j}( x, \omega_{n_j} ).
\end{equation}
We can therefore expand out the expression 
\begin{equation}\label{muph}
\int_{\R^d} \mu_{m,\delta}(x,\omega) {\phi}(x)\ dx
\end{equation}
using \eqref{mum-def} as
\begin{equation}\label{muph-2}
 \sum_{\vec n \in (\Z^d)^m} X_{\delta,\vec n}
\end{equation}
where $\vec n \coloneqq  (n_1,\ldots,n_m)$, and $X_{\delta,\vec n}$ is the complex-valued random variable
\begin{equation}\label{Xdef}
X_{\delta,\vec n} \coloneqq 
\int_{\R^d}
[\mu^{(m)}_{\delta,n_m}(\cdot,\omega_{n_m}) T_{m-1} \ldots T_1 \mu^{(1)}_{\delta,n_1}(\cdot, \omega_{n_1})](x) {\phi(x)}\ dx.
\end{equation}
Note that our qualitative hypotheses ensure that for each fixed $\delta$, only finitely many of the $X_{\delta,\vec n}$ are non-zero, and that each of the random variables $X_{\delta,\vec n}$ are bounded.

To obtain the concentration result \eqref{conc} we use Chebyshev's inequality.  From that inequality we see that it suffices to show a first moment estimate
\begin{equation}\label{first-mom}
 | \sum_{\vec n \in (\Z^d)^m} \E(X_{\delta,\vec n}) - z | \lesssim_\eps \delta^\eps
\end{equation}
together with a second moment estimate of the form
\begin{equation}\label{second-mom}
 \E \left| \sum_{\vec n \in (\Z^d)^m} X_{\delta,\vec n} - \E(X_{\delta,\vec n}) \right|^2  \lesssim_\eps \delta^\eps
\end{equation}
for some $\eps > 0$ (independent of $\delta$ and $\phi$).  (One can also control higher moments, but the second moment will suffice for our application.) 

\subsection{The second moment estimate} Let us first settle the second moment estimate \eqref{second-mom}.  We can expand the left-hand side as
$$ \sum_{\vec n, \vec n' \in (\Z^d)^m} \E( X_{\delta,\vec n} \overline{X_{\delta,\vec n'}} ) - \E( X_{\delta,\vec n} ) \overline{\E(X_{\delta,\vec n'})}.$$
Now observe from \eqref{Xdef} that $X_{\delta,\vec n}$ and $X_{\delta,\vec n'}$ are independent and the corresponding term in the above sum vanishes, unless we have $n_j = n'_{j'}$ for some $1 \leq j, j' \leq m$.  Thus by the triangle inequality and Cauchy-Schwarz we can estimate the previous expression by
$$ 2 \sum_{1 \leq j,j' \leq m}
\sum_{\vec n, \vec n' \in (\Z^d)^m: n_j = n'_{j'}} \E( |X_{\delta,\vec n}|^2 )^{1/2} \E( |X_{\delta,\vec n'}|^2 )^{1/2}.$$
It therefore suffices to establish an estimate of the form
\begin{equation}\label{sumxx}
 \sum_{\vec n, \vec n' \in (\Z^d)^m: n_j = n'_{j'}} \E( |X_{\delta,\vec n}|^2 )^{1/2} \E( |X_{\delta,\vec n'}|^2 )^{1/2}
\lesssim_\eps \delta^\eps
\end{equation}
for all $1 \leq j,j' \leq m$.

Fix $j,j'$.  We now pause to give a basic estimate on the size of each of the $X_{\delta,\vec n}$.  Define the kernel $K_0$ by setting
$$ K_0(n) \coloneqq  \frac{1}{\langle n\rangle^{d}}.$$

\begin{proposition}[Size estimate]\label{sizeestimate}  If $1 < p < \infty$ and $\vec n \in (\Z^d)^m$, then
$$ \E( |X_{\delta,\vec n}|^p )^{1/p} \lesssim_{p} \delta^d \langle \delta |n_m| \rangle^{-2d}
\left( \prod_{i=1}^m |[f_{i}]_\delta(n_i)| \right) \prod_{i=1}^{m-1} K_0(n_{i+1}-n_i).$$
\end{proposition}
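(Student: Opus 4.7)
The plan is to induct on $m$, peeling off one stochastic bump at a time and gaining one factor of $K_0$ at each stage while maintaining localization near the ``current'' scale-$\delta$ lattice site. First I would factor out the deterministic constants by writing $X_{\delta,\vec n} = \prod_{i=1}^{m} [f_i]_\delta(n_i) \cdot Y_{\delta,\vec n}$, where
$$Y_{\delta,\vec n}(\omega) \coloneqq \int \phi(y) \, W_m(y,\omega) \, dy$$
and $W_k$ is defined recursively by $W_1(y,\omega) \coloneqq F_1(y, \omega_{n_1})$ and $W_k(y,\omega) \coloneqq F_k(y, \omega_{n_k}) \, T_{k-1}[W_{k-1}(\cdot,\omega)](y)$ for $k \geq 2$, with $F_i(y,\omega) \coloneqq (g_i)_{[n_i,\delta]}(y,\omega)$. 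The stochastic-localization hypothesis on $g_i$ gives the pointwise bound $\|F_i(y,\cdot)\|_{L^q(\Omega)} \lesssim_{q,N} \langle (y - n_i\delta)/\delta\rangle^{-N}$ for all $q \in (1,\infty)$ and $N > 0$; it remains to show $\|Y_{\delta,\vec n}\|_{L^p(\Omega)} \lesssim \delta^d \langle \delta n_m\rangle^{-2d} \prod_{i=1}^{m-1} K_0(n_{i+1}-n_i)$.

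The heart of the argument is the inductive claim that for every $k$, every $q \in (1,\infty)$, and every $N > 0$,
$$\|W_k(y,\omega)\|_{L^q(\Omega)} \lesssim_{q,N} A_k \, \langle (y - n_k\delta)/\delta \rangle^{-N}, \qquad A_k \coloneqq \prod_{i=1}^{k-1} K_0(n_{i+1}-n_i).$$
The base case $k=1$ is immediate. For the inductive step, H\"older in $\Omega$ at exponent $2q$ gives
$$\|W_k(y)\|_{L^q(\Omega)} \leq \|F_k(y)\|_{L^{2q}(\Omega)} \cdot \|T_{k-1}[W_{k-1}](y,\cdot)\|_{L^{2q}(\Omega)},$$
and the first factor is already bounded by $\langle (y - n_k\delta)/\delta\rangle^{-N'}$ for arbitrarily large $N'$. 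To bound the second factor, split $W_{k-1}$ via a smooth cutoff at scale $\delta$ around $n_{k-1}\delta$ into a far part $W^{\mathrm{f}}_{k-1}$ and a near part $W^{\mathrm{n}}_{k-1}$. For the far part, pairing the pointwise kernel bound $|K_{k-1}(y,z)| \lesssim |y-z|^{-d}$ with Minkowski's integral inequality in $\Omega$ and the inductive hypothesis for $W_{k-1}$ yields a bound $\lesssim A_{k-1}\langle (y-n_{k-1}\delta)/\delta\rangle^{-d}$. For the near part, the same target bound is obtained via the vector-valued $L^{2q}$-boundedness of $T_{k-1}$ together with the decay estimates for singular integrals of localized bumps underpinning Proposition~\ref{tf}. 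Multiplying the two factors and invoking the elementary inequality
$$\langle z - n_{k-1}\rangle^{-d} \langle z - n_k\rangle^{-N'} \lesssim K_0(n_k - n_{k-1}) \, \langle z - n_k\rangle^{-(N'-d)},$$
which follows by case-splitting on whether $|z-n_k|$ is smaller or larger than $\tfrac12 |n_k - n_{k-1}|$, closes the induction after taking $N'$ large enough relative to $N$.

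To finish, integrate the inductive bound at $k=m$ against $\phi$: Minkowski's inequality and the change of variable $y = \delta z$ give
$$\|Y_{\delta,\vec n}\|_{L^p(\Omega)} \lesssim A_m \delta^d \int |\phi(\delta z)| \, \langle z - n_m\rangle^{-N} \, dz.$$
Since $\phi$ is compactly supported, splitting on whether $|\delta n_m|$ lies inside or outside a fixed multiple of the support radius of $\phi$ bounds this last integral by $\langle \delta n_m \rangle^{-2d}$ for $N$ chosen sufficiently large, completing the proof. The main obstacle is the near-diagonal estimate for the singular integral acting on a localized stochastic bump: the pointwise kernel bound is not integrable across the diagonal, so one cannot close the argument by a single application of Minkowski's inequality in $\Omega$; a careful decomposition into a mean-zero part (on which $T_{k-1}$ produces extra decay) and a smooth-bump remainder, combined with the vector-valued Calder\'on--Zygmund theory already implicit in Proposition~\ref{tf}, is what is needed to recover the required pointwise-in-$y$ bound.
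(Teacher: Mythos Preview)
Your approach has a genuine gap at the very first step: the pointwise-in-$y$ bound
$\|F_i(y,\cdot)\|_{L^q(\Omega)} \lesssim_{q,N} \langle (y-n_i\delta)/\delta\rangle^{-N}$
does \emph{not} follow from the definition of a stochastic localized function. The hypothesis is only that $\langle x\rangle^N g_i(x,\omega)\in L^p(\R^d\times\Omega)$ for all $N,p$, which by Fubini gives merely
$\int_{\R^d}\|g_i(x,\cdot)\|_{L^q(\Omega)}^q\langle x\rangle^{Nq}\,dx<\infty$,
not pointwise decay of $x\mapsto\|g_i(x,\cdot)\|_{L^q(\Omega)}$. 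A deterministic $g_i$ with an $L^p$-integrable but unbounded singularity already kills the base case. The same difficulty recurs in the inductive step: even if $W_{k-1}$ did satisfy your pointwise bound, vector-valued $L^{2q}$-boundedness of $T_{k-1}$ and the estimates behind Proposition~\ref{tf} (which are $L^p$-on-balls bounds, see \eqref{tgp}) do not yield pointwise-in-$y$ control of $T_{k-1}[W_{k-1}](y)$ for the near part; your closing paragraph gestures at a mean-zero/remainder splitting, but the remainder is again only an $L^p$-localized bump with no smoothness assumed, so its transform need not be pointwise bounded.

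The paper avoids this by never seeking pointwise bounds. Instead, for each fixed $\omega$ it tracks \emph{weighted spatial $L^p$ norms}: writing $X_{\delta,\vec n}=\int \langle x\rangle_{[n_m,\delta]}^{-3d}\phi(x)\cdot\langle x\rangle_{[n_m,\delta]}^{3d}[\mu^{(m)}_{\delta,n_m}T_{m-1}\cdots T_1\mu^{(1)}_{\delta,n_1}](x)\,dx$, one H\"older-splits and then inductively applies the localized singular integral bound of Lemma~\ref{lsib},
$\|\langle\cdot\rangle_{[n',\delta]}^{-N}Tf\|_{L^p}\lesssim K_0(n-n')\|\langle\cdot\rangle_{[n,\delta]}^{N}f\|_{L^p}$,
together with H\"older in the spatial variable (exponent $mp$ for each factor). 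This produces exactly $\prod K_0(n_{i+1}-n_i)$ times $\prod_i\|\langle\cdot\rangle_{[n_i,\delta]}^{6d}\mu^{(i)}_{\delta,n_i}(\cdot,\omega_{n_i})\|_{L^{mp}}$, and now the stochastic-localization hypothesis is precisely what is needed: it bounds the $L^{mp}(\R^d\times\Omega)$ norm of $\langle\cdot\rangle^{6d}g_i$, giving $\delta^{d/mp}|[f_i]_\delta(n_i)|$ after rescaling. The factor $\delta^{d/p'}\langle\delta n_m\rangle^{-2d}$ comes from the dual weighted norm of $\phi$. Your overall outline (factor out the $[f_i]_\delta(n_i)$, induct to gain one $K_0$ per singular integral, then pair against $\phi$) is the right shape, but the induction must run in weighted $L^p(\R^d)$ rather than pointwise in $y$.
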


For the proof of the proposition we shall need the following weighted version of the $L^p$ bounds for singular integrals.  
\begin{lemma}[Localized singular integral bounds]\label{lsib}  Let $T$ be a singular integral operator.  If $n, n' \in \Z^d$, $\delta > 0$, $1 < p < \infty$, and $N > d$, then we have the bound
$$ \| \langle \cdot \rangle_{n',\delta}^{-N} T f \|_{L^p(\R^d)} \lesssim_{T,p,d,N} K_0(n-n') \| \langle \cdot \rangle_{[n,\delta]}^N f \|_{L^p(\R^d)},$$
where $f$ is any function for which the right-hand side is finite.
\end{lemma}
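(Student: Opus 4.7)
The plan is first to reduce to the case $\delta=1$ and $n=0$. Under the substitution $\tilde f(x)\coloneqq f(\delta x)$, both sides of the desired inequality rescale by exactly $\delta^{d/p}$ (using $T\tilde f=\widetilde{Tf}$ by dilation invariance together with a change of variables in the $L^p$ integrals), so the estimate is dilation-invariant; translation invariance of $T$ then lets us shift so that $n=0$. Writing $h\coloneqq\langle\cdot\rangle^N f\in L^p$ and $R\coloneqq\langle n'\rangle$, the goal becomes
\[
\bigl\|\,\langle\cdot-n'\rangle^{-N}\,T\bigl(\langle\cdot\rangle^{-N}h\bigr)\bigr\|_{L^p(\R^d)}\;\lesssim\;R^{-d}\,\|h\|_{L^p(\R^d)}.
\]
When $R=O(1)$, $K_0(n')\asymp 1$ and the bound is immediate from $L^p$-boundedness of $T$ combined with the pointwise estimate $\langle\cdot-n'\rangle^{-N},\langle\cdot\rangle^{-N}\leq 1$, so I would focus on $R$ large.

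The strategy for large $R$ is to split $f\coloneqq\langle\cdot\rangle^{-N}h$ as $f_{\mathrm{near}}+f_{\mathrm{far}}$ with $f_{\mathrm{near}}\coloneqq f\cdot 1_{B(0,R/4)}$. For $f_{\mathrm{far}}$ the weight $\langle\cdot\rangle^{-N}$ is pointwise $\lesssim R^{-N}$ on its support, so $\|T f_{\mathrm{far}}\|_{L^p}\lesssim R^{-N}\|h\|_{L^p}\leq R^{-d}\|h\|_{L^p}$ using $N>d$, and the outer weight $\leq 1$ does no damage. For $f_{\mathrm{near}}$ I would further split the outer $L^p$ integration region at $|x-n'|=R/2$. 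On the complement $|x-n'|>R/2$ the outer weight satisfies $\langle x-n'\rangle^{-N}\lesssim R^{-N}$, so $L^p$-boundedness of $T$ applied to $f_{\mathrm{near}}$ (whose $L^p$ norm is at most $\|h\|_{L^p}$) again delivers a bound of $R^{-N}\|h\|_{L^p}$. On the ball $|x-n'|\leq R/2$ one has $|x|\geq R/2$ while $|y|\leq R/4$ for $y\in\supp(f_{\mathrm{near}})$, so $|x-y|\geq|x|/2\gtrsim R$, the kernel is genuinely nonsingular there, and the crude bound $|K(x,y)|\lesssim R^{-d}$ yields the pointwise estimate $|Tf_{\mathrm{near}}(x)|\lesssim R^{-d}\|f_{\mathrm{near}}\|_{L^1}$. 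H\"older's inequality then controls $\|f_{\mathrm{near}}\|_{L^1}\leq \|\langle\cdot\rangle^{-N}\|_{L^{p'}}\|h\|_{L^p}\lesssim\|h\|_{L^p}$ (which requires $N>d/p'$, supplied by $N>d$), while integrating the outer weight $\langle\cdot-n'\rangle^{-N}$ in $L^p$ over the ball contributes only a bounded factor (requiring $N>d/p$, again supplied by $N>d$).

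The main obstacle — and the only case in which the $R^{-d}$ decay is saturated rather than beaten — is the ``near input, near observation point'' contribution described last. There one cannot invoke $L^p$-boundedness of $T$ (it would yield only $O(1)$ and destroy the decay), and must instead exploit the quantitative spatial separation $|x-y|\gtrsim R$ between $\supp(f_{\mathrm{near}})$ and the ball around $n'$, coupled with an $L^1$ bound on $f_{\mathrm{near}}$. The other three pieces of the decomposition each produce the stronger bound $O(R^{-N})$, and the hypothesis $N>d$ is used precisely so that those stronger bounds cleanly dominate the target $R^{-d}$.
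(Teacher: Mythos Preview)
Your argument is correct (up to a harmless imprecision: with $R=\langle n'\rangle$ one has $|n'|=\sqrt{R^2-1}<R$, so ``$|x|\ge R/2$'' should really read ``$|x|\ge R/2-1\gtrsim R$ for $R$ large''; the separation $|x-y|\gtrsim R$ still follows once one takes, say, $R\ge 8$ in the ``large $R$'' regime). The overall strategy is sound and the hypothesis $N>d$ is used exactly where needed.

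Your route differs from the paper's. The paper does not perform a two-scale near/far split; instead, after rescaling to $\delta=1$, it first proves the elementary building-block estimate
\[
\|Tf\|_{L^p(B(n',1))}\lesssim K_0(n-n')\,\|f\|_{L^p(B(n,1))}
\]
for $f$ supported in a single unit ball (trivial from the kernel bound when $|n-n'|\ge 2$, and from $L^p$-boundedness otherwise), and then decomposes both the input and the output over the unit-cube lattice, summing the resulting double series $\sum_{k,\ell}\langle k\rangle^{-N}\langle n'+k-n-\ell\rangle^{-d}\langle\ell\rangle^{-N}$ and checking it is $O(\langle n-n'\rangle^{-d})$. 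Your single-scale decomposition is more direct and arguably cleaner for this particular lemma; the paper's lattice decomposition is more systematic and makes the role of the discrete kernel $K_0$ appear structurally, which dovetails with how $K_0$ is used in the subsequent combinatorial estimates (Proposition~\ref{sizeestimate} and Lemma~\ref{rcm}).
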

\begin{proof} By scaling we may set $\delta = 1$.  We have
$$ \| T f \|_{L^p(B(n',1))} \lesssim_{T,p,d} K_0(n-n') \| f \|_{L^p(B(n,1))}$$
for all $n,n' \in \Z^d$ and all $f \in L^p(B(n,1))$ (extending $f$ by zero outside of this ball).
Namely, if $|n-n'| \geq 2$ then the claim follows simply by using the integral representation of $T$ and the triangle inequality (and H\"older's inequality).  If $|n-n'| < 2$ the claim instead follows by using the boundedness of $T$ on $L^p(\R^d)$.  It then follows that
\begin{align*} 
&\| \langle \cdot \rangle_{n',1}^{-N} T f \|_{L^p(\R^d)}\lesssim 
\sum_{k\in\Z^d}\langle k \rangle^{-N}\|Tf\|_{L^p(B(n'+k,1))}\\
\lesssim&
\sum_{k\in\Z^d}\langle k \rangle^{-N}\Big(\sum_{\ell\in\Z^d} \langle n'+k-n-\ell \rangle^{-d}\|f\|_{L^p(B(n+\ell,1))}\Big)\\
\lesssim &\Big(\sum_{k,\ell\in\Z^d}\langle k \rangle^{-N} \langle n'+k-n-\ell \rangle^{-d} \langle \ell \rangle^{-N}\Big) \| \langle \cdot \rangle_{[n,1]}^N f \|_{L^p(\R^d)}.
\end{align*}
This yields the stated estimate, since the last written sum is easily
estimated to be less than $O\left(\langle n-n'\rangle^{-d}\right)$ by considering separately
the case $ \max(|k|,|\ell|)\leq |n-n'|/4$ and its complement.
\end{proof}

\begin{proof}[Proof of Proposition \ref{sizeestimate}]  Pick $p\in(1,\infty)$ and fix $\omega\in\Otilde$. Denote
$$
g(\cdot,\omega)\coloneqq  \langle x \rangle_{n_m,\delta}^{3d}[\mu^{(m)}_{\delta,n_m}(\cdot,\omega_{n_m}) T_{m-1} \ldots T_1 \mu^{(1)}_{\delta,n_1}(\cdot, \omega_{n_1})](x) 
$$
so that we may write
\begin{equation}\label{term}
|X_{\delta, \vec n}|=\Big|\int_{\R^d} g(x,\omega)\langle x \rangle_{n_m,\delta}^{-N}\phi(x)dx \Big|
\leq \|g(\cdot,\omega)\|_{L^p(\R^d)}\|\langle \cdot \rangle_{n_m,\delta}^{-3d}\phi \|_{L^{p'}(\R^d)}.
\end{equation}
By an inductive application of Lemma \ref{lsib} and H\"older's inequality as in the proof of \eqref{integralholder}, we see that
\begin{equation}\label{mu_prod}
\|g(\cdot,\omega)\|_{L^p(\R^d)}\lesssim 
\prod_{i=1}^{m-1} K_0(n_{i+1}-n_i)
\prod_{i=1}^m \| \langle \cdot \rangle_{n_i,\delta}^{6d} \mu^{(i)}_{\delta,n_i}(\cdot,\omega_{n_i}) \|_{L^{pm}(\R^d)}.
\end{equation}
Since $\phi$ is a Schwartz function one easily verifies that
$$
\|\langle \cdot \rangle_{n_m,\delta}^{-3d}\phi \|_{L^{p'}(\R^d)}\lesssim\frac{\delta^{d/p'}}{\langle \delta n_m \rangle^{2d}}.
$$
Moreover, \eqref{muj-nj} and the localized nature of $g_i$ yield that 
$$  \big(\E \| \langle \cdot \rangle_{n_i,\delta}^{6d} \mu^{(i)}_{\delta,n_i}(\cdot,\omega_{n_i}) \|_{L^{pm}(\R^d)}^{pm} \big)^{1/pm}
\lesssim \delta^{d/mp}|[f_{i}]_\delta(n_i)|.
$$
By combining these estimates with \eqref{term} the desired estimate follows by H\"older's inequality and the relation $1/p+1/p'=1$.
\end{proof}

In order to utilize  the above proposition we need to introduce   discrete fractional integrals.
To that end, given any real number $\alpha\in [0,d)$, define the more general kernels $K_\alpha \colon \Z^d \to \R^+$ on the integer lattice by 
\begin{equation}\label{K-def} K_\alpha(n) \coloneqq  \frac{1}{\langle n \rangle^{d-\alpha}}.
\end{equation}
The convolution of functions defined on the lattice $\Z^d $ is defined in the usual manner:
$$ F*G(n) \coloneqq  \sum_{m \in \Z^d} F(m) G(n-m).$$
By direct computation we have the convolution estimate
\begin{equation}\label{k-conv}
K_\alpha * K_\beta \lesssim_{\alpha,\beta,n} K_{\alpha + \beta}
\end{equation}
whenever $\alpha, \beta >0$ and $\alpha + \beta < d$.  These estimates are unfortunately not true at the endpoints $\alpha=0$ or $\beta =0$, due to the logarithmic failure of summability of $K_0$.  However, from Young's inequality we easily see that
\begin{equation}\label{young}
\| K_0 * f \|_{l^q(\Z^d)} \lesssim_{d,p,q} \|f\|_{\ell^p(\Z^d)}
\end{equation}
for all $1 \leq p < q \leq \infty$ and all $f \in l^p(\Z^d)$.

Finally, we are ready to estimate the left-hand side of \eqref{sumxx} by
\begin{align}
\lesssim 
S&\;\coloneqq \;\delta^{2d}\sum_{\vec n, \vec n' \in (\Z^d)^m: n_j = n'_{j'}} 
\langle \delta n_m \rangle^{-2d} ( \prod_{i=1}^m |[f_{i}]_{\delta}(n_i)| ) \prod_{i=1}^{m-1} K_0(n_{i+1}-n_i)\label{product2}\\
&\quad \times \langle \delta n'_m \rangle^{-2d} ( \prod_{i'=1}^m |[f_{i'}]_{\delta}(n'_{i'})| ) \prod_{i'=1}^{m-1} K_0(n'_{i'+1}-n'_{i'}).\nonumber
\end{align}
Writing $n_j = n'_{j'} = n$, we can rewrite this expression using the convolution operator $T_{K_0} f \coloneqq  f*K_0$  and by denoting $\Phi_\delta(n) \coloneqq  \langle \delta n \rangle^{-2d}$ as
\begin{align}\label{6product}
\sum_{n \in \Z^d} & |[f_{j}]_{\delta}(n)| |[f_{j'}]_{\delta}(n)|H_{1,\delta}(n)H_{2,\delta}(n)G_{1,\delta}(n)G_{2,\delta}(n)
\end{align}
with\footnote{Below the definitions of $H_{1,\delta}$ and its analogues are to be interpreted as follows: 
 starting from the right, one alternatively  performs either a pointwise
multiplication a sequence $|[f_{k}]_\delta|$ or an application by the
operator $T_{K_0}$.}
\begin{align*}
&H_{1,\delta}(n)\coloneqq \big(T_{K_0} (|[f_{j+1}]_\delta|) \ldots T_{K_0} (|[f_m]_\delta| \Phi_\delta)\big)(n) \\
&H_{2,\delta}(n)\coloneqq \big(T_{K_0} (|[f_{j'+1}]_\delta|)  \ldots T_{K_0} (|[f_m]_\delta| \Phi_\delta)\big)(n) \\
&G_{1,\delta}(n)\coloneqq \big( T_{K_0} (|[f_{j-1}]_\delta|)  \ldots T_{K_0} (|[f_1]_\delta|)\big)(n) \\
&G_{2,\delta}(n)\coloneqq \big( T_{K_0} (|[f_{j'-1}]_\delta|) \ldots T_{K_0} (|[f_1]_\delta|)\big)(n).
\end{align*}
In order to bound these functions, observe first that  for given $p>1$,
for any  $\widetilde\varepsilon>0$, and for  an arbitrary sequence $(a(n))_{n\in\Z^d}$   
\begin{align}\label{closetoinfinity}
\|a[f_i]_\delta\|_{\ell^p(\Z^d)}\leq \|a\|_{\ell^{p}(\Z^d)}\|[f_i]_\delta\|_{\ell^{\infty}(\Z^d)}\lesssim_{f_i,p,\widetilde\varepsilon}\delta^{-\widetilde\varepsilon}\|a\|_{\ell^{p}(\Z^d)},
\end{align} 
since Lemma \ref{discret} yields that $ \|[f_i]_\delta\|_{\ell^{\infty}(\Z^d)} \leq \|[f_i]_\delta\|_{\ell^{q}(\Z^d)}\lesssim \delta^{-d/q}$ for all $q>1$ and we just take $q$ large enough.
Fix $\varepsilon >0$. Using alternately the above estimate (with a very small value of $\widetilde\varepsilon$) and the boundedness of $T_{K_0}\colon \ell^p(\Z^d)\to \ell^q(\Z^d)$ for any $1<p<q<\infty$  we obtain that
\begin{equation}\label{H}
\|H_{k,\delta}\|_{\ell^{2+\varepsilon}(\Z^d)}\lesssim \delta^{-\varepsilon}
\|\Phi_\delta\|_{\ell^2(\Z^d)}\lesssim  \delta^{-\varepsilon-d/2},\quad k=1,2.
\end{equation}
Set $q=q(\varepsilon)=4\varepsilon^{-1}(2+\varepsilon)$  so that $2/q+1/(2+\varepsilon)=1/2,$ and  use \eqref{closetoinfinity} to similarly obtain the estimate
\begin{equation}\label{G}
\|G_{k,\delta}\|_{\ell^q(\Z^d)}\lesssim \delta^{-\varepsilon}
\|[f_1]_\delta\|_{\ell^{q-\eps'}(\Z^d)}\lesssim  \delta^{-\varepsilon-d/(q-\varepsilon')}\lesssim  \delta^{-(d+1)\varepsilon} ,\quad k=1,2,
\end{equation}
where we just picked $\varepsilon'>0$ small enough.
Finally, plugging the above bounds in \eqref{6product},  using $2/(1+\varepsilon)+4/q=1$ and the fact that $\|[f_1]_\delta\|_{\ell^{q}(\Z^d)}\lesssim \delta^{-d\varepsilon}$ we obtain via H\"older's inequality
\begin{align*}
S\;\leq \;&\delta^{2d}\|H_{1,\delta}\|_{\ell^{2+\varepsilon}(\Z^d)}\|H_{2,\delta}\|_{\ell^{2+\varepsilon}(\Z^d)}\|G_{1,\delta}\|_{\ell^{q}(\Z^d)}
\|G_{2,\delta}\|_{\ell^{q}(\Z^d)}\|[f_j]_\delta\|_{\ell^{q}(\Z^d)}
\|[f_{j'}]_\delta\|_{\ell^{q}(\Z^d)}\\
\lesssim\;&\delta^{2d}\delta^{-\varepsilon-d/2}\delta^{-\varepsilon-d/2}\delta^{-2(d+1)\varepsilon}\delta^{-2d\varepsilon}\\
\lesssim\;& \delta^{d-O(\varepsilon)}.
\end{align*}
If $j=1$ (resp. $j'=1$), the term $G_{1,\delta}$  (resp. $G_{2,\delta}$) is not present in \eqref{6product},
and the above argument goes through with obvious modifications.
The desired estimate follows as $\varepsilon >0$ is arbitrary.

\begin{remark}
One way to understand the obtained bound for the second moment is to observe that a
computation analoguous to the above one could also be used e.g., to estimate the quantity
$\E \| X_{\delta, \vec n, c}\|^2$, which we know to be bounded. However,
direct implementation the above method would give us a divergent upper bound
of the form $ \delta^{-O(\varepsilon)}$, due to the logarithmic non-boundedness
of the kernel $K_0$ on $\ell^p$-spaces, as we are  ignoring nontrivial cancellations that are behind  Proposition \ref{sizeestimate}.
Roughly speaking, what saves us above is that the condition $n_j=n'_{j'}$, due to independence, reduces the number  of
terms by a factor $\delta^{d}.$
\end{remark}
 
 \subsection{The first moment estimate}
It now remains to establish the first moment estimate \eqref{first-mom}, whose proof is more combinatorial in nature.  
We can split the left-hand side into finitely many components, depending on the equivalence class that $n_1,\ldots,n_m$ generates.  Given any surjective coloring function $c\colon \{1,\ldots,m\} \to \{1,\ldots,k\}$ which assigns a ``color'' in some finite set of integers $\{1,\ldots,k\}$ to every integer $\{1,\ldots,m\}$, let $(\Z^d)^m_c$ denote the set of all $\vec n \in (\Z^d)^m$ such that $n_j = n_{j'}$ if and only if $c(j) = c(j')$.  Clearly we can partition $(\Z^d)^m$ into finitely many of the $(\Z^d)^m_c$.  Thus it will suffice to show that for each coloring function $c$ there exists a complex number $z_c$ (independent of $\delta$, but depending on all other parameters) for which we have
$$ | \sum_{\vec n \in (\Z^d)^m_c} \E(X_{\delta,\vec n}) - z_c | \lesssim_\eps \delta^\eps.$$
Fix $c$.  We can reparameterise this as
$$
| \sum_{\vec n \in (\Z^d)^k_{{\neq}}} \E( X_{\delta,\vec n, c}) - z_c | \lesssim_\eps \delta^\eps.
$$
where $(\Z^d)^k_{\neq}$ is the space of all $k$-tuples $(n_1,\ldots,n_k) \in (\Z^d)^k$ with $n_1,\ldots,n_k$ distinct, and $X_{\delta,\vec n, c}\colon \Omega^k \to \C$ is the complex-valued random variable
$$
X_{\delta,\vec n,c} \coloneqq 
\int_{\R^d}
[\mu^{(m)}_{\delta,n_{c(m)}}(\cdot,\omega_{n_{c(m)}}) T_{m-1} \ldots T_1 \mu^{(1)}_{\delta,n_{c(1)}}(\cdot, \omega_{n_{c(1)}})](x) {\phi(x)}\ dx.
$$
Observe from the inclusion-exclusion principle that the sum $\sum_{\vec n \in (\Z^d)^k \backslash (\Z^d)^k_{\neq}} \E( X_{\vec n, c} )$ can be expressed as a finite linear combination of expressions of the form $\sum_{\vec n \in (\Z^d)^{k'}} \E_c( X_{\vec n, c'} )$ where $k' < k$ and $c'\colon \{1,\ldots,n\} \to \{1,\ldots,k'\}$ is a surjective coloring, and $c$ is a refinement of $c'$ (i.e., $c(j_1)=c(j_2)$ implies that $c'(j_1)=c'(j_2)$).  Thus by induction on $k$ it in fact suffices to show that for every pair of colorings $(c,c')$ with $c$ finer than $c'$ there exists a complex number $z'_{c,c'}$ for which we have
$$
| \sum_{\vec n \in (\Z^d)^k} \E_c(X_{\delta,\vec n, c'}) - z'_{c,c'} | \lesssim_\eps \delta^\eps,
$$
where we used the notation
$$
\E_cX_{\delta,\vec n,c'} \coloneqq 
\int_{\R^d}
\E[\mu^{(m)}_{\delta,n_{c'(m)}}(\cdot,\omega_{c(m)}) T_{m-1} \ldots T_1 \mu^{(1)}_{\delta,n_{c'(1)}}(\cdot, \omega_{c(1)})](x) \phi(x)\ dx.
$$
Let us now use Fubini's theorem to write
$$ \sum_{\vec n \in (\Z^d)^k} \E_c( X_{\vec n, c'} ) = \int_{\R^d} T_{c,c',\delta}(1)( x ) {\phi(x)}\ dx,$$
where $T_{c,c',\delta}$ is the (deterministic) operator
$$ T_{c,c',\delta}h(x) \coloneqq  \sum_{\vec n \in (\Z^d)^k} \E [\mu^{(m)}_{\delta,n_{c'(m)}}(\cdot,\omega_{c(m)}) T_{m-1} \ldots T_1 \mu^{(1)}_{\delta,n_{c'(1)}}(\cdot, \omega_{c(1)}) h](x).$$

Let us next verify the uniform boundedness of our `colored'
sum:
\begin{lemma}\label{uniLP} Assume that $h=h_\delta$ is a deterministic multiscale function. Then
\begin{equation}\label{Lph}
\|T_{c,c',\delta}h_\delta\|_{L^p(\R^d)}\leq C<\infty\quad \textrm{for}\;\; \delta >0.
\end{equation}
\end{lemma}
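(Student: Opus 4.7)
My plan is to swap the expectation and the $L^p$-norm via Minkowski's integral inequality and then bound the resulting operator --- which is deterministic once $\vec\omega$ is fixed --- by adapting the weighted estimate from the proof of Proposition~\ref{sizeestimate}, together with the $\ell^p$ convolution machinery already used in the second-moment bound. Concretely, if $\vec\omega = (\omega_1,\ldots,\omega_k) \in \Omega^k$ denotes the random variables indexed by the $k$ colors of $c$, then Fubini and Minkowski give
\begin{equation*}
\|T_{c,c',\delta} h_\delta\|_{L^p(\R^d)} \leq \E_{\vec\omega}\|W_{\vec\omega,\delta} h_\delta\|_{L^p(\R^d)}, \qquad W_{\vec\omega,\delta} h \coloneqq \sum_{\vec n \in (\Z^d)^{k'}} V_{\vec n, \vec\omega, \delta},
\end{equation*}
where $V_{\vec n, \vec\omega, \delta} \coloneqq \mu^{(m)}_{\delta, n_{c'(m)}}(\cdot, \omega_{c(m)}) T_{m-1} \cdots T_1 \mu^{(1)}_{\delta, n_{c'(1)}}(\cdot, \omega_{c(1)}) h_\delta$.

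Mimicking the proof of Proposition~\ref{sizeestimate} --- iterating Lemma~\ref{lsib} and H\"older's inequality, while using the uniform $L^{(m+1)p}$-bound on $h_\delta$ given by Lemma~\ref{weak} --- I would obtain, for each $\vec n$ and $\vec\omega$, the weighted estimate
\begin{equation*}
\bigl\|\langle\cdot\rangle_{[n_{c'(m)},\delta]}^{d}\, V_{\vec n, \vec\omega, \delta}\bigr\|_{L^p(\R^d)} \lesssim Q(\vec\omega)\, \delta^{d/p} \prod_{\ell=1}^m |[f_\ell]_\delta(n_{c'(\ell)})| \prod_{\ell=1}^{m-1} K_0\bigl(n_{c'(\ell+1)} - n_{c'(\ell)}\bigr),
\end{equation*}
where $Q(\vec\omega) \coloneqq \prod_\ell \|\langle \cdot \rangle^N g_\ell(\cdot, \omega_{c(\ell)})\|_{L^{(m+1)p}(\R^d)}$ has finite expectation in $\vec\omega$ because each $g_\ell$ is a stochastic localized function. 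Summing over $\vec n$ is then handled by applying the localization lemma~\ref{loc} to the outer sum over $n_{c'(m)}$, while the remaining $k'-1$ inner summations are controlled by the convolution inequalities \eqref{k-conv} and \eqref{young} combined with the $\ell^q$ discretization bounds of Lemma~\ref{discret}(i), exactly as in the passage \eqref{6product}--\eqref{G}. Because we now deal with a single $\vec n$-sum rather than the double sum in \eqref{sumxx}, the $\delta$ factors will balance exactly instead of leaving a small positive power, yielding $\|W_{\vec\omega,\delta} h_\delta\|_{L^p} \lesssim Q(\vec\omega)$ uniformly in $\delta$; integrating in $\vec\omega$ then completes the proof.

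The main obstacle is the coupling imposed by $c'$: whenever $c'(\ell) = c'(\ell+1)$, the factor $K_0(n_{c'(\ell+1)} - n_{c'(\ell)}) = K_0(0)$ provides no convolution decay, so summability at each translate $n_i$ must be drawn either from the localization weight (for $n_{c'(m)}$) or from the $\ell^q$-norms of the $[f_\ell]_\delta$ factors, in the order dictated by the combinatorics of $c'$. This is an essentially combinatorial bookkeeping step, but the exponent budget coming out of Proposition~\ref{sizeestimate} is loose enough to accommodate repeated colors, so no genuinely new estimate is needed.
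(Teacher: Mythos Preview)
Your approach has a genuine gap: the termwise size estimate followed by the $K_0$-convolution machinery does \emph{not} give a uniformly bounded answer --- it gives $\delta^{-O(\eps)}$, which diverges as $\delta\to 0$. The reason is exactly the one flagged in the Remark following the second-moment estimate in the paper: the convolution operator $T_{K_0}$ is unbounded on every fixed $\ell^p(\Z^d)$, so each application forces a strict increase in the Lebesgue exponent, and each multiplication by $|[f_i]_\delta|$ then costs a factor $\delta^{-\eps}$ (via either $\|[f_i]_\delta\|_{\ell^\infty}\lesssim\delta^{-\eps}$ or the equivalent H\"older loss). In the second-moment bound \eqref{sumxx} this loss is absorbed by the surplus $\delta^{d}$ coming from the independence constraint $n_j=n'_{j'}$; here there is no such surplus, and your claimed ``exact balance'' of $\delta$-powers is off by precisely these $\eps$-losses. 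Already in the simplest case where $c'$ is the identity coloring, running your argument yields $\delta^{d/p}\|A\|_{\ell^p}\lesssim\delta^{-O(\eps)}$ rather than $O(1)$. Proposition~\ref{sizeestimate} discards the cancellations in the singular integrals, and those cancellations are exactly what is needed for a sharp bound.

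The paper avoids this by a completely different device: a random decoupling. One enlarges the probability space by auxiliary $\pm 1$-valued random variables $Y_{k,n}$ whose product expectations encode the coloring constraint $c'$, so that $T_{c,c',\delta}h_\delta$ is rewritten as the expectation over the enlarged space of an honest iterated product $H^{(m)}_\delta T_{m-1}\cdots T_1 H^{(1)}_\delta h_\delta$, where each $H^{(i)}_\delta$ is a full (unconstrained) sum over $n\in\Z^d$ of the form covered by Remark~\ref{generalLp}. Lemma~\ref{uniformLp} (via Remark~\ref{generalLp2}) then applies directly and gives the uniform $L^p$ bound with no $\eps$-loss.
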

\begin{proof}
We double the number of coordinates in our probability space and consider the product (probability) space $\widetilde \Omega\times  \widetilde \Omega'$ whose
elements we can write as sequences $(\widetilde\omega,\widetilde\omega')=(\omega_n,\omega'_n)_{n\in Z^d}$, and choose unimodular random variables $Y_{k,j}\colon \widetilde\Omega\to \{ 1,-1\}$ 
for $k=1,\ldots ,m$ and $j\in\Z^d$ such that
$
\E Y_{1,n_1}\cdot Y_{m,n_m}
$
is equal to 1 if $(n_1,\ldots ,n_m)$ respects the coloring $c'$ (i.e., $n_\ell=n_{\ell'}$
for those
$\ell,\ell'\in\{ 1,\ldots , m\}$ that have the same color with respect to
$c'$), and otherwise this expectation is zero. For example, in the case
$m=2$ and one color (i.e., $c'(1)=c'(2)=1$) one may take $Y_{1,j}=Y_{2,j}=\Theta_j,$
where $(\Theta_j)$ is a Bernoulli sequence. In the general
case one associates independent copies of such sequences for all
the pairs $(k,k')$ that have the same color. More explicitly, one can set $\widetilde\Omega'\coloneqq \{-1,1\}^A$ with the Bernoulli measure where $A$ is the set of triples $(n,\ell,\ell')$ with $n\in\Z^d$ and $\ell,\ell' \in \{1,\ldots ,m\}$
with $c'(\ell)=c'(\ell')$, and set
$$
Y_{r,n}(\widetilde\omega')=\prod_{(n,\ell,\ell')\in A:\; r\in\{\ell,\ell'\}}\widetilde\omega'_{n,\ell,\ell'}
$$
for any $\widetilde\omega'=\widetilde\omega'_{(n,\ell,\ell')\in A},$  $r\in\{1,\ldots, m\},$ and $n\in \Z^d.$

 We may then write
\begin{equation}\label{representation}
T_{c,c',\delta}h_\delta(x)=\E_{\widetilde\Omega\times \widetilde \Omega'}\Big(\sum_{\vec n \in (\Z^d)^m}  [\widetilde \mu^{(m)}_{\delta,n_m}(\cdot,\omega_{c(m)},\widetilde \omega') T_{m-1} \ldots T_1 \widetilde\mu^{(1)}_{\delta,n_{1}}(\cdot, \omega_{c(1)},\widetilde \omega') h](x),\Big)
\end{equation}
where for $k\in\{ 1,\ldots ,m\}$ and $n\in\Z^d$ we set
$$
\widetilde\mu^{(k)}_{\delta,n}(x,\widetilde \omega,\widetilde\omega')\coloneqq 
\mu^{(k)}_{\delta,n}(x,\widetilde \omega)Y_{k,n}(\widetilde\omega').
$$
In particular, we may write
\begin{equation}\label{representation2}
T_{c,c',\delta}h=\E_{\widetilde\Omega\times \widetilde \Omega'}
H^{(m)}_\delta T_{m-1}\ldots H^{(1)}_\delta h_\delta,
\end{equation}
with
$$
H^{(k)}_\delta(x,\widetilde \omega,\widetilde\omega')=
\sum_{n\in \Z^d}[f_k]_\delta (n)(g_k)_{[n,\delta]}(x,\omega_{c(k)})Y_{k,n}(\widetilde\omega)
$$
Recalling Remark \ref{generalLp}, the argument of Lemma \ref{fbound}  applies as before since  the additional factors $Y_{k,n}$ or
having the variable $\omega_{c(k)}$ instead of $\omega_{n}$ do
not affect our old estimates, whence
$$
\|H^{(k)}_\delta\|_{L^p(\R^d\times \widetilde\Omega
\times \widetilde\Omega)} \leq_{p} C \qquad \textrm{for all}\; \delta >0, \;\; p\in (1,\infty).
$$
Finally, Lemma \ref{uniformLp} (together with Remark \ref{generalLp2}) and H\"older's inequality yields the desired result.
\end{proof}

We pause to clarify by an example the role of colorings introduced above.

\begin{remark}\label{re:splitting} In order to illustrate the use of
the colorings and the division to cases `split and `non-split'
(the latter notions will introduced shortly below in the proof of Proposition \ref{mainprop}) let us consider in case $m=3$ the expectation
$$
S\coloneqq \E \left(\sum_{(n_1,n_2,n_3)\in\Z^3} X_{n_1}TY_{n_2}TZ_{n_3}\right),
$$
that is of the type we have to handle.
Here the $X_n=X_n(x,U_n)$,  $Y_j=Y_n(x,U_n),$ $Z_n=Z_n(x,U_n)$ ($n\in\Z$) are (say bounded) random functions, and the  $U_j$ are i.i.d random variables. The linear operator $T$ could be e.g., a singular integral operator. In the first step one uses independence and Fubini to write the above sum in the form (the extra subindex $\not=$ indicates that one sums only over triples or tuples consisting of \emph{unequal} indices)
\begin{equation}\label{eq:isojako}
\begin{split}
S&=\sum_{n_1,n_2,n_3,\not=} (\E X_{n_1}) T(\E Y_{n_2})T(\E Z_{n_3})+
\sum_{n_1,n_2\not=} \big(\E (X_{n_1}TY_{n_1})\big)T\E Z_{n_2}\\
&+
\sum_{n_1,n_2\not=} \E \big( X_{n_1}T(\E Y_{n_2})T Z_{n_1}\big)+
\sum_{n_1,n_2\not=} (\E X_{n_1})T\E (Y_{n_2} T Z_{n_2})+
\sum_{n_1} \E \big(X_{n_1}TY_{n_1}TZ_{n_1}\big)
\\
&=: S_1+S_2+S_3+S_4+S_5.
\end{split}
\end{equation}
In the next step uses the inclusion exclusion principle to rewrite the sums so that one sums over all indices. For example, we obtain
\begin{eqnarray*}
S_1&=&\sum_{n_1,n_2,n_3} (\E X_{n_1})T (\E Y_{n_2})T(\E Z_{n_3})-
\sum_{n_1,n_2} (\E X_{n_1})T(\E Y_{n_1})T(\E Z_{n_2})
-\sum_{n_1,n_2} (\E X_{n_1})T(\E Y_{n_2})T(\E Z_{n_1})\\
&-&\sum_{n_1,n_2} (\E X_{n_1})T(\E Y_{n_2})T(\E Z_{n_2})
+2\sum_{n_1} (\E X_{n_1})T(\E Y_{n_1})T(\E Z_{n_1})\\
&\coloneqq &S_{11}-S_{12}-S_{13}
-S_{14}+2S_{15}.
\end{eqnarray*}
\end{remark}
Each of these terms can be expressed via a pair of colourings $(c,c')$,
let $c_{\ell k}$ and $c'_{\ell k}$ stand for the colours of the term $S_{\ell k}.$
At most three colours are needed.
 We have $c_{1\ell}=(1,2,3)$ for each $\ell\in\{1,\ldots 5\}$.
 In turn, $c'_{11}=(1,2,3)$, $c'_{12}=(1,1,2)$,$c'_{13}=(1,2,1)$,
 $c'_{11}=(1,2,2)$, and $c'_{15}=(1,1,1)$.

 In  similar vein the term $S_2$ can be rewritten as
\begin{eqnarray*}
S_2&=&\sum_{n_1,n_2} \big(\E (X_{n_1}TY_{n_1})\big)T\E Z_{n_2}-
\sum_{n_1} \big(\E (X_{n_1}TY_{n_1})\big)T\E Z_{n_1}\\
&=:& S_{21}-S_{22}.
\end{eqnarray*}
Now the colourings are $c_{21}=c_{22}=(1,1,2)$,
$c'_{21}=(1,1,2)$ and $c'_{22}=(1,1,1)$. The terms $S_3$ and
$S_4$ are analogous, and finally the term $S_5$ needs no
further subdivision and one has $c_5=c'_5=(1,1,1).$

Among the  terms $ S_{11},S_{12},S_{13},S_{14},S_{15},S_{21},
S_{22}$ and $S_5$ the terms $S_{11}, S_{12},S_{14}$ and $S_{21}$
will later on be designated as \emph{split}, and the remaining ones as \emph{nonsplit}. This means the following: for a split term  one can concretely divide the defining sum  to independent left side and right hand side summations, and
 also the expectations split accordingly.  E.g., we may write
$$
S_{21}= fTg\qquad \textrm{with}\quad f\coloneqq  \quad \sum^n_{j_1} \E (X_{j_1}TY_{j_1})
\quad \textrm{and} \quad g\coloneqq  \sum_{j_2}^n \E Z_{j_2}.
$$

\bigskip

We return to the main course of the argument and note that,
in view of Lemma \ref{weak}, it  suffices to show that
\begin{proposition}[Main proposition]\label{mainprop}  If $c\colon \{1,\ldots,m\} \to \{1,\ldots,k\}$ is surjective, and $h = h_\delta$ is a (deterministic) multiscale function, then $T_{c,c'}(h) = T_{c,c',\delta}(h_\delta)$ is also a (deterministic) multiscale function.
\end{proposition}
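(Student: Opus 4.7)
The proof proceeds by induction on the number $m$ of multipliers. By multilinearity and Lemma~\ref{adecay}, we reduce to the case where each $\mu^{(i)}_\delta = f_i \otimes_\delta g_i$ is a single stochastic multiscale tensor product with $g_i$ compactly supported in $[0,1]^d$. The base case $m=1$ is direct: $T_{c,c',\delta}(h) = (f_1 \otimes_\delta \bar g_1)\cdot h$ where $\bar g_1(y) \coloneqq \E g_1(y,X)$ is a deterministic localized function, so the result is multiscale by Proposition~\ref{multmult}.

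Following Remark~\ref{re:splitting}, call $(c,c')$ \emph{split at} $\ell \in \{1,\ldots,m-1\}$ if no color class of $c'$ meets both $\{1,\ldots,\ell\}$ and $\{\ell+1,\ldots,m\}$. (Since $c$ refines $c'$, the same then holds for $c$, because $\omega_n$ depends on $n$ and thus identical $\omega$-indices force identical spatial indices.) In the \textbf{split case}, the independence of $\omega$-variables across the cut, combined with the linearity of $T_\ell$ (which lets one pull $\sum_{\vec n_-} \E$ inside $T_\ell$ onto the inner input), yields the clean recursion
\begin{equation*}
T_{c,c',\delta}(h) \;=\; T_{c_+,c'_+,\delta}\bigl(T_\ell\, T_{c_-,c'_-,\delta}(h)\bigr),
\end{equation*}
where $(c_\pm,c'_\pm)$ denote the restrictions to $\{1,\ldots,\ell\}$ and $\{\ell+1,\ldots,m\}$. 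By induction on $m$ the inner expression is a multiscale function; by Proposition~\ref{tf} so is its image under $T_\ell$; and by induction once more (strictly fewer multipliers) so is the outer application.

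The \textbf{non-split case} --- every cut is bridged by some $c'$-class --- is the main obstacle. The cleanest instance is the \emph{fully connected} case, in which a single $c'$-class contains all positions so that just one summation variable $n$ remains. Iterated use of the translation/dilation invariance $T_i\bigl((g)_{[n,\delta]}(\cdot,\omega)\bigr) = (T_i g(\cdot,\omega))_{[n,\delta]}$, together with the compact support of the $g_i$, collapses the nested expression near $x = n\delta$ to a single tensor-product form
\begin{equation*}
\sum_n \Bigl(\prod_{i=1}^m [f_i]_\delta(n)\Bigr)\, \bar G_{[n,\delta]}(x)\, h(x),
\end{equation*}
where $\bar G$ is a deterministic function obtained by applying $\E$ to the iterated $T$-action on the $g_i(\cdot,\omega^{(c(i))})$'s. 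Several applications of Lemma~\ref{commutator} replace $\prod_i [f_i]_\delta$ by $[\prod_i f_i]_\delta$ modulo a negligible function, and the envelope/localized decomposition of $h$ (via multilinearity) is absorbed into the tensor-product form; the outcome is the multiscale tensor product $(\prod_i f_i \cdot f_h) \otimes_\delta (\bar G \cdot g_h)$, which is multiscale by Lemma~\ref{envprod} and Proposition~\ref{multmult}, provided $\bar G$ is localized. The localization of $\bar G$ follows from iterated application of Lemma~\ref{lsib}, exactly in the spirit of the proof of Proposition~\ref{tf}, exploiting that the $g_i$ are compactly supported.

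Intermediate non-split configurations, where several $c'$-classes chain together and the $c$-coloring introduces independence within a class, are handled by combining the two mechanisms: any color class of $c$ whose randomness is fully independent of the surrounding chain can be summed out first, producing a deterministic multiscale factor by Propositions~\ref{multmult} and~\ref{tf}; the remaining irreducibly connected skeleton is then handled by the tensor-product collapse above. The principal technical care lies in the combinatorial bookkeeping and in verifying at each collapse step that the resulting function $\bar G$ retains the decay needed to apply Lemma~\ref{adecay}, which is ensured by repeated use of the localized singular integral estimate of Lemma~\ref{lsib} together with the compactness of supports of the $g_i$.
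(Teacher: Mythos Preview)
Your proposal has two genuine gaps.

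\textbf{First}, the reduction via Lemma~\ref{adecay} to compactly supported $g_i$ is not available here: the $g_i$ are \emph{stochastic} localized functions, and the paper explicitly warns (immediately after Lemma~\ref{adecay}) that ``when we deal with stochastic multiscale functions the natural analogue of the above lemma is no longer valid.'' The obstruction is structural: periodizing $g$ to $[0,1]^d$ would replace $g_{[n,\delta]}(x,\omega_n)$ by a sum of terms $g_{[n-k,\delta]}(x,\omega_n)$, but in the original tensor product the $k$-shifted bump carries $\omega_{n-k}$, not $\omega_n$. Your subsequent argument (e.g., ``compact support of the $g_i$ collapses the nested expression'') leans on this reduction and does not survive its removal.

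\textbf{Second, and more fundamental}, your treatment of the non-split case is incomplete beyond the single-color (``fully connected'') situation. Consider the simplest genuinely non-split example with two $c'$-colors, say $m=3$, $c'=(1,2,1)$. After taking expectations you are still left with a \emph{double} sum over $n_1,n_2$, linked through the singular integral kernels which decay only like $K_0(n_1-n_2)=\langle n_1-n_2\rangle^{-d}$---borderline nonsummable. Summing out the ``independent'' variable $n_2$ first produces a multiscale function $H_\delta$ that is not localized near $n_1$, so the remaining $n_1$-sum does not collapse to a tensor product as you assert. The paper's proof handles this by writing $\vec n = n + \vec r$ with $r_{c(m)}=0$ and proving the key summability estimate Lemma~\ref{rcm}: for non-split $c'$, the product $\prod_i K_0(r_{c(i+1)}-r_{c(i)})$ is summable in $\vec r$ with polynomial gain $R^{-\alpha}$. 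This in turn rests on a combinatorial fact about non-split colorings (Lemma~\ref{color}: one can always remove a color other than $c(m)$ and stay non-split), which drives an induction reducing the number of colors. Your ``combining the two mechanisms'' paragraph does not supply this, and iterated use of Lemma~\ref{lsib} alone only gives the borderline $K_0$ decay, not the extra $\alpha>0$ needed for summability.
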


The remainder of this section is devoted to the proof of this proposition.

We first observe that one proves easily (e.g., compare the proof of  Proposition \ref{multmult2}) that if we  know the claim (for a given colouring $c'$) in the special case $h_\delta = 1$, then it is true (for the given colouring $c'$) in the general case. Namely, the proof of  Proposition \ref{multmult2} applies as such to the product term $H^{(1)}_\delta h_\delta$
in  representation \eqref{representation2} verifying that it can be replaced by $\widetilde H^{(1)},$ which is of the same form as $H^{(1)}$, and by decoupling the representation we obtain an
expression with $1$ in place of $h_\delta.$

We induct on $k$, i.e., the number of colors in $c'$.  If there is only one color in $c'$, then 
\begin{align*}
 T_{c,c',\delta}(1)(x) \coloneqq  &\sum_{ n \in (\Z^d)} \E_c [\mu^{(m)}_{\delta,n}(\cdot,\omega_{c(m)}) T_{m-1} \ldots T_1 \mu^{(1)}_{\delta,n}(\cdot, \omega_{c(1)}) ](x)\\
 =&\Big[\sum_{n \in \Z^d} \Big(\prod_{j=1}^m[f_j]_\delta(n)\Big)
  g_{[n,\delta]}(x)\Big],
\end{align*}
where
$$
g(\cdot,\omega)=:E g_m(\cdot ,\omega_{c(j)})T_{m-1}\ldots T_1  g_1(\cdot,\omega_{c(j)}) 
$$
Obviously $g$ is a localized function, and 
hence Lemma \ref{commutator} and Proposition  \ref{multmult} verify that $T_{c,c',\delta}(h)$ is a multiscale function.  Now we suppose inductively that $k> 1$, and that the claim has already been proven for all smaller values of $k$.

We begin by disposing of the \emph{split} case, in which there exists a non-trivial partition $\{1,\ldots,m\} = \{1,\ldots,j\} \cup \{j+1,\ldots,m\}$ with $1 \leq j < m$ such that $c'(\{1,\ldots,j\})$ and $c'(\{j+1,\ldots,m\})$ are disjoint.
By relabeling colors if necessary we may assume that $c'(\{1,\ldots,m\}) = \{1,\ldots,k'\}$ for some $1 \leq k' < k$. Then,  we let $c_1'\colon \{1,\ldots,j\} \to \{1,\ldots,k'\}$ be the restriction of $c$ to $\{1,\ldots,j\}$, and $c'_2\colon \{1,\ldots,m-j\} \to \{1,\ldots,k-k'\}$ be the function $c'_2(i) \coloneqq  c'(i+j)-k'$. The restrictions 
$c_1,c_2$ are defined analoguously using the fact that $c$ refines $c'.$ Observe by the definition of $\E_c$ that
\begin{align*} &T_{c,c',\delta}(1)(x)\\ \coloneqq  &\sum_{\vec n \in (\Z^d)^{k-k'}} \E_{c_2} [\mu^{(m)}_{\delta,n_{c'_2(m-j)}}(\cdot,\omega_{c_2(m-j)}) T_{m-1} \ldots T_{j+1} \mu^{(j+1)}_{\delta,n_{c'_2(1)}}(\cdot, \omega_{c_2(1)}) T_j T_{c_1,c'_1,\delta}(1)](x).
\end{align*}
By induction hypothesis, $T_{c_1,c'_1,\delta}(1)$ is a deterministic multiscale function, and then by Proposition \ref{tf}
$T_j T_{c_1,c'_1,\delta}(1)$ is also.  The claim then follows by another application of the inductive hypothesis.

Finally, we deal with the more difficult \emph{non-split} case in which no non-trivial partition of the above type exists.  
In other words, we need to show that
$$ T_{c,c'\delta}(1)(x) = \sum_{\vec n \in (\Z^d)^k} \E \left(\mu^{(m)}_{\delta,n_{c'(m)}}(\cdot,\omega_{c(m)}) T_{m-1} \ldots T_1 \mu^{(1)}_{\delta,n_{c'(1)}}(\cdot, \omega_{c(1)})\right)(x)$$
is a multiscale function.  Using \eqref{muj-nj} and the fact that all the $T_1,\ldots,T_{m-1}$ commute with dilations,
we can rewrite $T_{c,c',\delta}1(x)$ as
$$ T_{c,c'\delta}1(x) \coloneqq  \sum_{\vec n \in (\Z^d)^k} (\prod_{i=1}^m [f_i]_\delta(n_{c'(i)})) (G_{\vec n})_{[0,\delta]}(x)$$
where
$$ G_{\vec n}(x) \coloneqq  
\E\left(g_m( \cdot - n_{c'(m)}, \omega_{n_{c(m)}} ) T_{m-1} \ldots T_1 g_1(\cdot - n_{c'(1)}, \omega_{n_{c(1)}})\right)(x).$$
Using the translation-invariance of the $T_1,\ldots,T_{m-1}$, we can rewrite this as
$$
 T_{c,c',\delta}1(x) \coloneqq  \sum_{n \in \Z^d} \sum_{\vec r \in (\Z^d)^k: r_{c(m)} = 0}
 (\prod_{i=1}^m [f_i]_\delta(n + r_{c(i)})) (G_{\vec r})_{[n,\delta]}(x).$$

To estimate this expression, we observe that exactly as in \eqref{mu_prod} we have
for any $\vec r \in (\Z^d)^k$, $N > 0$, and $1 < p < \infty$ the estimate
\begin{equation}\label{warp}
 \| \langle \cdot \rangle^N G_{\vec r} \|_{L^p(\R^d)} \lesssim_{N,p} \prod_{i=1}^{m-1} K_0( r_{c(i+1)} - r_{c(i)} )
 \end{equation}
We combine this lemma with the non-split nature of $c$ to obtain the following.
\begin{lemma}\label{rcm}  For any $N > 0$ and $1 < p < \infty$ there exists $\alpha > 0$ such that
$$ \| \langle \cdot \rangle^N \sum_{\vec r \in (\Z^d)^k: r_{c(m)} = 0: R \leq \langle \vec r \rangle < 2R} |G_{\vec r}| \|_{L^p(\R^d)}
\lesssim_{p,N,\alpha} R^{-\alpha}
$$
for all $R \ge 1$.
\end{lemma}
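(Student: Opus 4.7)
The plan is to reduce the lemma to a purely combinatorial estimate on an iterated sum of the kernels $K_0$. Applying the triangle inequality in $L^p$ followed by \eqref{warp}, the left-hand side is bounded by
\[
\sum_{\vec r \in (\Z^d)^k : r_{c(m)} = 0,\ R \leq \langle \vec r \rangle < 2R} \prod_{i=1}^{m-1} K_0(r_{c(i+1)} - r_{c(i)}),
\]
so it suffices to show this sum is $O(R^{-\alpha})$ for some $\alpha>0$ independent of $R$. By pigeonhole, on the range $\langle \vec r\rangle \ge R$ some color $a \in \{1,\ldots,k\}$ satisfies $\langle r_a\rangle \gtrsim R$, hence $1 \lesssim R^{-\alpha}\sum_a \langle r_a\rangle^\alpha$ there. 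It therefore suffices to verify, for each color $a$ and all sufficiently small $\alpha>0$, the finiteness of the weighted sum
\[
W_a \coloneqq \sum_{\vec r : r_{c(m)} = 0} \langle r_a\rangle^\alpha \prod_{i=1}^{m-1} K_0(r_{c(i+1)} - r_{c(i)}).
\]

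To dispose of the extra weight, I would pick (using surjectivity of $c$) some position $i^*$ with $c(i^*)=a$ and use the telescoping identity $r_a = -\sum_{i=i^*}^{m-1}(r_{c(i+1)}-r_{c(i)})$ together with $\langle x+y\rangle \le 2\langle x\rangle\langle y\rangle$ to obtain $\langle r_a\rangle^\alpha \lesssim \prod_{i\ge i^*}\langle r_{c(i+1)} - r_{c(i)}\rangle^\alpha$. Absorbing these factors into the corresponding $K_0$'s converts them into $K_\alpha$ factors (since $\langle s\rangle^\alpha K_0(s) = K_\alpha(s)$ by the definition \eqref{K-def}), yielding
\[
W_a \ \lesssim \ \sum_{\vec r : r_{c(m)} = 0}\ \prod_{i < i^*} K_0(r_{c(i+1)} - r_{c(i)}) \prod_{i \ge i^*} K_\alpha(r_{c(i+1)} - r_{c(i)}).
\]

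It remains to show this iterated kernel sum is finite for small $\alpha>0$, and here the non-split hypothesis on $c$ enters essentially. Non-split at the cut $j=1$ forces $c(1)$ to appear again somewhere in $\{2,\ldots,m\}$, so the free variable $r_{c(1)}$ sits inside at least two kernel factors and can be summed out via a single convolution controlled by \eqref{k-conv} or \eqref{young}; non-split at successive cuts $j=2,3,\ldots$ plays the same role for subsequent variables, allowing the sum to be peeled off one variable at a time. The main obstacle is that $K_0$ sits exactly on the boundary of $\ell^1$-summability, so a naive convolution $K_0\ast K_0$ falls out of the scale $K_\beta$ by a logarithm; to circumvent this, one must alternate Young's inequality steps that trade a tiny exponent loss for summability, and this accumulated loss is absorbed by the fixed exponent gain $\alpha$ coming from the $K_\alpha$ factors along the path chosen in Step~2. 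Choosing $\alpha$ small enough keeps the cumulative exponent below $d$ so that \eqref{k-conv} remains applicable throughout, and making this quantitative bookkeeping consistent with the non-split combinatorial structure is the technical heart of the proof.
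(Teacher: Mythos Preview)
Your reduction to the combinatorial kernel sum via the triangle inequality and \eqref{warp} is exactly what the paper does, and the pigeonhole/telescoping step is also in the same spirit. But from there your argument takes a detour and then stops short of the main point.

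First, the detour. You convert only the kernels along the telescoping path from $K_0$ to $K_\alpha$ and then worry about the remaining $K_0$ factors sitting ``on the boundary of $\ell^1$''. This obstacle is illusory: since $K_0(s)=\langle s\rangle^{-\alpha}K_\alpha(s)\le K_\alpha(s)$ pointwise, you may replace every remaining $K_0$ by $K_\alpha$ for free. In fact the paper bypasses your weight-and-telescope altogether: from $\langle \vec r\rangle\ge R$ and surjectivity of $c$ one has $\max_i\langle r_{c(i+1)}-r_{c(i)}\rangle\gtrsim R$, hence $\prod_i K_0(s_i)=\bigl(\prod_i\langle s_i\rangle^{-\alpha}\bigr)\prod_i K_\alpha(s_i)\lesssim R^{-\alpha}\prod_i K_\alpha(s_i)$, and one is reduced directly to showing $S_\alpha(c)\coloneqq\sum_{\vec r:\,r_{c(m)}=0}\prod_{i=1}^{m-1}K_\alpha(r_{c(i+1)}-r_{c(i)})\lesssim_\alpha 1$ for small $\alpha$. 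No Young's inequality bookkeeping is needed.

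Second, and more importantly, you do not actually establish the boundedness of $S_\alpha(c)$ (equivalently, of your mixed sum). Your proposed scheme---``non-split at the cut $j$ forces the next variable to recur, so peel off one variable at a time''---does not work as stated. Non-split at the cut $j$ means $c(\{1,\dots,j\})\cap c(\{j+1,\dots,m\})\neq\emptyset$; it does \emph{not} say that the specific color $c(j)$ recurs later, so after summing out $r_{c(1)}$ there is no guarantee the next leftmost variable is linked to at least two kernels. The paper instead inducts on the number of colors: a short combinatorial lemma (Lemma~\ref{color}) shows that any non-split coloring with at least three colors admits a color $k'\neq c(m)$ whose removal leaves a non-split coloring. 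One then sums out $r_{k'}$ across a maximal run of that color via a single convolution $K_\alpha\ast K_\alpha\lesssim K_{2\alpha}$ (from \eqref{k-conv}), obtaining $S_\alpha(c)\lesssim S_{3\alpha}(c')$ for a non-split $c'$ with one fewer color; taking $\alpha\le 3^{-(m+1)}$ keeps all exponents below $d$ throughout the induction. This color-removal induction is the missing ``technical heart'' you allude to, and your cut-based scheme does not substitute for it.
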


\begin{proof}  In view of \eqref{warp} and the triangle inequality, it suffices to show that
$$ \sum_{\vec r \in (\Z^d)^k: r_{c(m)} = 0: R \leq \langle \vec r \rangle < 2R} \prod_{i=1}^{m-1} K_0( r_{c(i+1)} - r_{c(i)} )
\lesssim_\alpha R^{-\alpha}$$
for $\alpha$ sufficiently small.  Now recall the kernels $K_\alpha$ defined in \eqref{K-def}. From the triangle inequality (and the surjectivity of $c$) we see that
$$ \prod_{i=1}^{m-1} K_0( r_{c(i+1)} - r_{c(i)} ) \lesssim_\alpha R^{\alpha} \prod_{i=1}^{m-1} K_\alpha( r_{c(i+1)} - r_{c(i)} )$$
whenever $R \leq \langle \vec r \rangle$.  Thus it will suffice to show that
\begin{equation}\label{split_warp}
S_\alpha(c)\coloneqq  \sum_{\vec r \in (\Z^d)^k: r_{c(m)} = 0} \prod_{i=1}^{m-1} K_\alpha( r_{c(i+1)} - r_{c(i)} ) \lesssim_\alpha 1
 \end{equation}
 for all $\alpha\leq \alpha_0(m)>0$.
 
 In order to prove this we need a simple lemma on colorings. For that end we
 need some terminology. Let 
 $c\colon \{ 1,\ldots, m\}\to \{ 1,\ldots k\}$ be a (surjective) coloring. 
 Fix $k'\in \{ 1,\ldots k\}$, and denote $\ell=\#c^{-1}(k').$  One defines
 in an obvious way the
 coloring $c'\colon \{ 1,\ldots, m-\ell\}\to \{ 1,\ldots k-1\}$ that is obtained by
 removing  color $k'$  from $c$. More precisely, if $c$ is thought as a
 sequence of length $m$ containing integers from $\{ 1,\ldots k\}$, the
 sequence $c'$ is obtained by taking of all occurrences $k$ from $c$, keeping the order
 of the remaining elements, and replacing
 every $j>k'$ by $j-1.$ 
 \begin{lemma}\label{color} Let $c$ be a
 non-split coloring with at least 3 colors. Then we may remove from $c$ a color $($different from $
 c(m))$ so that the remaining coloring is also non-split.
 \end{lemma}
 \begin{proof}
 We begin by  defining the convex support of a color $k'$
 as the interval $\{ j,j+1,\ldots ,j'\},$ where $j= \min\{ i\in\{1,\ldots ,k\} : c(i)=k'\}$ and $j'= \max\{ i\in\{1,\ldots ,k\} : c(i)=k'\}$.
To prove the Lemma, note first that in  case $c(1)=c(m)$ we may remove any other color and what remains is non-split. In case $c(m)\not= c(1)$ we first try to remove the color $c(1)$. If the outcome is non-split  we are done. If the outcome is split it means that
 there must be a color $k'$ whose convex  support is contained
 in the convex support of $c(1)$, especially that color is different from $c(m).$ When color $k'$ is removed it is clear that remaining coloring is non-split.
\end{proof} 
 
   We return to the proof of \eqref{split_warp} and induct on the number of colours in $c$. If there is only one color the statement is obviously true. Assume then that $c$ contains $k$ different colors
 with $k\geq 2$ and the statement is true if the number of colors does nor exceed $k-1.$ 
 Now, if $k\geq3$, according to the previous lemma there is a color $k'$ that can be removed from $c$ so that the remaining  coloring $c'$ is non-split. If $k=2$
 we just pick $k'$ to be the color different from $c(m).$ Then,  since $c$ is non-split, we may  pick  $ 1\leq j<j'\leq k$  so that $j\leq j'-2$ and  $c(i)=k'$ for all $i$ with $j'<i<j'$, but $c(i)\not= k'$ for $i=j,j'.$ We obtain
\begin{align*}
 \sum_{r_{k'}\in\Z^d}\prod_{i=j}^{j'-1}K_\alpha (r_{c(i+1)}-r_{c(i)})
 \;=\; &\sum_{m\in\Z^d}K_\alpha (m-r_{c(j)})K_\alpha (r_{c(j')}-m)\\
 \lesssim_\alpha\; &K_{2\alpha}( 
 r_{c(j)}-r_{c(j')}).
\end{align*}
We thus obtain
 $$
 S_\alpha (c)\leq S_{3\alpha}(c')\lesssim 1,
 $$
 and by induction the claim follows if we take (say) $\alpha\leq \alpha_0\coloneqq 3^{-(m+1)}$
 initially.

\end{proof}

Now we can finally show that $T_{c,\delta}1$ is a multi-scale function.  Fix $1 < p < \infty$, let $N > d$ be large, and let $\eps_0 > 0$ be a small number to be chosen later.  Let us first consider the ``non-local'' contribution when $\langle \vec r \rangle \geq R \coloneqq  \delta^{-\eps_0}$.  From 
Lemma \ref{discret} (applied with $p$ close to infinity) we see that
$$ \|[f_i]_\delta\|_{l^\infty(\Z^d)} \lesssim_\eps \delta^{-\eps}$$
for all $\eps > 0$.  From Lemma \ref{rcm} and the triangle inequality we thus see that
$$ \| \langle \cdot \rangle^N \sum_{\underset{|\vec r|}{\vec r \in (\Z^d)^k: r_{c(m)} = 0}}
 (\prod_{i=1}^m [f_i]_\delta(n + r_{c(i)})) G_{\vec r} \|_{L^p(\R^d)} \lesssim_{p,N,\eps} \delta^{-\eps} R^{-\alpha} 
 |[f_m]_\delta(n)|$$
for some $\alpha > 0,$ assuming $|\vec r|\geq R,$ and so
$$ \| \langle \cdot \rangle_{[n,\delta]}^N \sum_{\vec r \in (\Z^d)^k: r_{c(m)} = 0}
 (\prod_{i=1}^m [f_i]_\delta(n + r_{c(i)})) (G_{\vec r})_{[n,\delta]} \|_{L^p(\R^d)} \lesssim_{p,N,\eps} \delta^{-\eps} \delta^{d/p} R^{-\alpha} |[f_m]_\delta(n)|$$
 for all $n \in \Z^d$.  Taking $l^p(\Z^d)$ norms of both sides and using H\"older and Lemma \ref{discret} we obtain (if $N$ is large enough)
$$ \| \sum_{n \in \Z^d} \sum_{\vec r \in (\Z^d)^k: r_{c(m)} = 0}
 (\prod_{i=1}^m [f_i]_\delta(n + r_{c(i)})) (G_{\vec r})_{[n,\delta]} \|_{L^p(\R^d)} \lesssim_{p,N,\eps} \delta^{-\eps} R^{-\alpha}$$
which is negligble by the choice of $R$ if we let $\eps$ be sufficiently small.  Thus we only need to consider the ``local'' contribution when $\langle \vec r\rangle < R$.  We split this local contribution into three pieces: the main term
\begin{equation}\label{zero}
\sum_{n \in \Z^d} \sum_{\vec r \in (\Z^d)^k: r_{c(m)} = 0; \langle \vec r \rangle < R}
 [\prod_{i=1}^m f_i]_{\delta}(n) (G_{\vec r})_{[n,\delta]}(x),
\end{equation}
a first error term
\begin{equation}\label{first}
\sum_{n \in \Z^d} \sum_{\vec r \in (\Z^d)^k: r_{c(m)} = 0; \langle \vec r \rangle < R}
 \left( \prod_{i=1}^m [f_i]_\delta(n) - [\prod_{i=1}^m f_i]_{\delta}(n) \right)  
  (G_{\vec r})_{[n,\delta]}(x)
\end{equation}
and a second error term
\begin{equation}\label{second}
\sum_{n \in \Z^d} \sum_{\vec r \in (\Z^d)^k: r_{c(m)} = 0; \langle \vec r \rangle < R}
 [ \prod_{i=1}^m [f_i]_\delta(n+r_{c(i)}) - \prod_{i=1}^m [f_i]_\delta(n) ]  
  (G_{\vec r})_{[n,\delta]}(x).
\end{equation}
Let us first consider the main term \eqref{zero}.  By Lemma \ref{envprod}, $\prod_{i=1}^m f_i$ is an envelope function.  From Lemma \ref{rcm} we see that the function 
$$\sum_{\vec r \in (\Z^d)^k: r_{c(m)} = 0; \langle \vec r \rangle < R} G_{\vec r}$$
is a localized function.  By Definition \ref{mtp}, we thus see that \eqref{zero} is a multiscale tensor product of an envelope function and a localized function, and is thus  a multiscale function.

To conclude the proof of Proposition \ref{mainprop}, and hence Theorem \ref{mainthm}, it suffices to show that the expressions \eqref{first} and \eqref{second} are negligible.  For this we shall just use  \eqref{warp} rather than the more sophisticated estimate in Lemma \ref{rcm} (in particular, we do not need the non-split hypothesis).

Now we turn to \eqref{first}.  Let $1 < p < \infty$, and pick any $N > d$.  Using the triangle inequality, followed by Lemma \ref{loc}, we can estimate the $L^p(\R^d)$ norm of \eqref{first} by
$$ \lesssim_{p,N} \sum_{\vec r \in (\Z^d)^k: r_{c(m)} = 0; \langle \vec r \rangle < R} 
( \sum_{n \in \Z^d} ( |\prod_{i=1}^m [f_i]_\delta(n) - [\prod_{i=1}^m f_i]_{\delta}(n)| \| \langle \cdot \rangle_{[n,\delta]}^N (G_{\vec r})_{[n,\delta]}(x) \|_{L^p(\R^d)} )^p )^{1/p}.$$
Applying a rescaled version of \eqref{warp}, we can estimate this by
$$ \lesssim_{p,N} \delta^{d/p} \sum_{\vec r \in (\Z^d)^k: r_{c(m)} = 0; \langle \vec r \rangle < R} \prod_{i=1}^{m-1} K_0( r_{c(i+1)} - r_{c(i)} ) 
\| \prod_{i=1}^m [f_i]_\delta(\cdot) - [\prod_{i=1}^m f_i]_{\delta}(\cdot)\|_{\ell^p(\Z^d)}.$$
Observe that on the ball of radius $R$, $K_0$ has an $\ell^1$ norm of $O_\eps(\delta^{-\eps})$ for any $\eps$.  Thus we can estimate the previous expression by
$$ \lesssim_{p,N,\eps} \delta^{d/p-\eps}
\| \prod_{i=1}^m [f_i]_\delta(\cdot) - [\prod_{i=1}^m f_i]_{\delta}(\cdot)\|_{\ell^p(\Z^d)}$$
for any $\eps > 0$.  Applying Lemma \ref{commutator} and H\"older's inequality repeatedly, we can thus estimate this expression by
$$ \lesssim_{p,N,\eps} \delta^{\eps_p - \eps}$$
for some $\eps_p > 0$ depending on $p$.  Setting $\eps \coloneqq  \eps_p/2$ (say) we see that \eqref{first} is negligible as desired.

Finally, we estimate \eqref{second}.  Again let $1 < p < \infty$, and pick any $N > d$.  Arguing as before, especially using the $\ell^1$ norms on $K_0$ on ball of radius $R$ we can estimate the $L^p(\R^d)$ norm of \eqref{second} by
$$  \lesssim_{p,N,\eps} \delta^{d/p-\eps}
\| \prod_{i=1}^m [f_i]_\delta(\cdot+r_{c(i)}) - \prod_{i=1}^m [f_i]_\delta(\cdot)\|_{\ell^p(\Z^d)}.$$
Using the crude estimate
$$ \left|\prod_{i=1}^m a_i - \prod_{i=1}^m b_i \right|\lesssim \sum_{i=1}^m |a_i - b_i| \prod_{j \neq i} (|a_i| + |b_i|),$$
the triangle inequality, and the already familiar estimate  
$$
\|[f_i]_\delta(\cdot+r_{c(i)})-[f_i]_\delta(\cdot)\|_{\ell^q(\Z^d)}\lesssim \delta^{-d/q+\varepsilon_q}
$$
  we get by H\"older that the $L^p(\R^d)$-norm of \eqref{second}
  has the upper bound
$\lesssim_{p,N,\eps} (R\delta)^{\eps_{mp}} \delta^{-\eps}$, where
$\eps_{mp} >0$.  By choice of $R$ we see  by choosing $\eps$ sufficiently small that \eqref{second} is negligible as required.  
This proves Proposition \ref{mainprop}.

The only thing that remains to be done to complete the proof of Theorem \ref{mainthm} is to get rid of the assumption that the envelope functions are compactly supported.
Recall \eqref{muph} and denote in the general case
$Z_\delta\coloneqq \int_{\R^d} \mu_{m,\delta}(x,\omega) {\phi}(x)\ dx$
and for $R>0$ set $Z_{\delta,R}\coloneqq \int_{\R^d} \mu_{R,m,\delta}(x,\omega) {\phi}(x)\ dx,$ where $\mu_{R,m,\delta}$ is obtained
from $\mu_{m,\delta}$ by replacing each envelope function
$f_j$ in its definition by $f_j 1_{B(0,R)}.$ Then for a suitably chosen sequence $R_k\uparrow\infty$ we have
$\|Z_{\delta,R_k}- Z_{\delta}\|_{L^2(\R^d\times\widetilde\Omega)}\leq 2^{-k}$ as $k\to\infty$, according to
\eqref{stochintegralholder}, and combined with H\"older's inequality this easily implies that $Z_{\delta,R_k}\to Z_{\delta}$ almost surely as $k\to\infty.$ We know that there are complex
numbers $z_k$ and $c,\varepsilon >0$ so that
\begin{equation}\label{zdk}
\P (|Z_{\delta,R_k}-z_k|>\delta^\eps)\leq c\delta^\varepsilon,
\end{equation}
and the argument in the present section verifies that
$c$ is independent of $k\geq 1.$ As $\E |Z_{\delta,R_k}|^2$ is uniformly bounded in $\delta$ and $k$, we deduce that the sequence
$(z_k)$ is uniformly bounded, and by moving to a subsequence we may assume that $z_k\to z$ as $k\to\infty.$
One obtains the desired inequality simply by letting $k\to\infty$
in \eqref{zdk}. The proof is complete.

\section{Quasiconformal homogenization}\label{se:qchomogenization}

Our next task is to apply Theorem \ref{mainthm} with Corollary \ref{co:main} to homogenization of quasiconformal maps. Here it turns out convenient  to proceed via the principal solutions, c.f., Subsection \ref{ss:bird}. This, on the other hand, requires us to first  make use the Theorems in the setting of compactly supported envelope functions. Once that is done the application to general quasiconformal homogenization poses no substantial difficulties. However, for the  reader's convenience  we present rather complete details.

We refer to e.g., \cite[Section 1]{CG} for a quick account of basic facts 
 about planar quasiconformal maps, and to \cite{AIM} for a comprehensive exposition on the topic. 
  Throughout this section $T$ stands for the Beurling operator \eqref{eq:beurl}. Recall from the introduction that a (quasiconformal) complex dilatation $\mu$ is a complex valued measurable function on the plane whose sup-norm is strictly less than 1, that a 3-point normalized homeomorphism  of the extended plane $f\colon \overline{\C}\to\overline{\C}$ fixes points $0,1$ and $\infty$, and that the measurable Riemann mapping theorem quarantees existence and uniqueness of a 3-point normalized homeomorphic  $W^{1,2}_{loc}$-solution to the Beltrami equation $\deeb f=\mu \dee f$ for any quasiconformal dilatation. 

In preparation for the  proof ot Theorem \ref{th:main}(i), we begin with a few simple deterministic lemmas,  which are modifications of  well-known methods in the  theory of planar quasiconformal mappings. 
Our first lemma 
shows that weak convergence of each 
individual term in the Neumann series is enough to guarantee  uniform convergence of the corresponding 
principal solutions and locally uniform convergence of the 3-point normalized solutions.

\begin{lemma}\label{lisaco:2.1} 
Let  us assume that for any $j\geq 1$ the dilatation  $\mu_j$ satisfies $ \|\mu_j\|_\infty\leq k<1$ and $\supp(\mu_j)\subset B$, where $B\subset\C$  is a ball. 
Denote the $m$-th term in the Neumann series  for $\mu_j$ by  
$$
\psi_{m,j}\coloneqq  \mu_jT\mu_j\ldots T\mu_j,
$$
where $\mu_j$
appears $m$ times, $m\geq 1$.
Assume also that for every fixed  $m$ there is the weak convergence in $L^p(\C)$ 
$$
\psi_{m,j}\overset{w}{\to} \psi_m \quad \textrm{as}\;\; j\to\infty,
$$
for all $1 < p< \infty.$ Then the solution
$F_j$ of the Beltrami equation $\deeb F_j=\mu_j\dee F_j$,
normalized by the 3-point condition, converges locally
uniformly to a  $k$-quasiconformal  limit $F_\infty\colon \C\to\C$.
\end{lemma}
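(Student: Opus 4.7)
The plan is to first reduce to the principal solutions and then transfer the conclusion to the three-point normalized maps via an affine change of coordinates. Since each $\mu_j$ is compactly supported in the fixed ball $B$, let $f_j$ be the principal solution to $\deeb f_j = \mu_j \dee f_j$ with the hydrodynamic normalization $f_j(z)-z = o(1)$ at infinity, so that $\deeb f_j = \sum_{m\geq 1}\psi_{m,j}$. The first step will be to show that this Neumann series converges geometrically in $L^p(\C)$ \emph{uniformly in $j$}. Because $\|T\|_{L^2\to L^2}=1$ and the operator norm depends continuously on $1/p$, I can pick $p$ close enough to $2$ so that $k\|T\|_{L^p}<1$. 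An easy induction then gives $\|\psi_{m,j}\|_{L^p} \leq k^m \|T\|_{L^p}^{m-1}|B|^{1/p}$, so the tail $\sum_{m>M}\psi_{m,j}$ is small in $L^p$ uniformly in $j$. Combined with the assumed weak convergence $\psi_{m,j}\rightharpoonup \psi_m$ in $L^p$ of each individual term, this will force
\[
\deeb f_j \rightharpoonup h \coloneqq \sum_{m\geq 1}\psi_m \qquad\text{weakly in } L^p(\C),
\]
with $h$ supported in $\overline{B}$; applying $T$ gives also $\dee f_j - 1 = T\deeb f_j \rightharpoonup Th$ weakly.

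The second step will upgrade this to locally uniform convergence of the maps themselves. Since $\deeb f_j$ is supported in the fixed ball $B$, the hydrodynamic normalization yields the Cauchy transform representation $f_j(z) = z + C(\deeb f_j)(z)$. For any fixed $z \in \C$, the kernel $w\mapsto (w-z)^{-1} 1_B(w)$ lies in $L^{p'}(\C)$ (as $p'<2$), so the weak convergence of $\deeb f_j$ yields the pointwise limit $f_j(z) \to f_\infty(z) \coloneqq z + Ch(z)$. On the other hand the family $\{f_j\}$ consists of $K$-quasiconformal maps with $K=(1+k)/(1-k)$, and the hydrodynamic normalization together with the uniform support condition makes it equicontinuous on compact subsets of $\C$ and normal in the usual quasiconformal sense. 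Normality combined with the identified pointwise limit promotes the convergence to locally uniform convergence $f_j \to f_\infty$, and the standard theory guarantees that $f_\infty$ is a $K$-quasiconformal homeomorphism of $\C$; the decay $|Ch(z)| = O(|z|^{-1})$ at infinity (since $h\in L^p(B)$) preserves the hydrodynamic normalization in the limit.

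The final step is a routine transfer to the three-point normalization: writing
\[
F_j(z) = \frac{f_j(z)-f_j(0)}{f_j(1)-f_j(0)},
\]
we have $f_j(0)\to f_\infty(0)$ and $f_j(1)\to f_\infty(1)$ with $f_\infty(0)\neq f_\infty(1)$ by injectivity of $f_\infty$, so $F_j \to F_\infty \coloneqq (f_\infty-f_\infty(0))/(f_\infty(1)-f_\infty(0))$ locally uniformly, and $F_\infty$ is a three-point normalized $k$-quasiconformal homeomorphism of $\C$. The main technical obstacle is precisely the uniform-in-$j$ convergence of the Neumann series tail in the first step; this is what lets weak convergence of each summand be promoted to weak convergence of the infinite sum, and it is fundamentally reliant on exploiting the $L^2$-isometry of the Beurling transform through a careful choice of $p$ near $2$. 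Once this is in place, everything else reduces to classical facts about Cauchy transforms, normal families of quasiconformal maps, and Möbius renormalization.
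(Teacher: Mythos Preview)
Your argument is correct and follows the same overall architecture as the paper's proof: pass to the principal solution, use the geometric $L^p$-convergence of the Neumann series for some $p>2$ close to $2$ (this is exactly the paper's choice too), and finish by the affine renormalization $F_j=(f_j-f_j(0))/(f_j(1)-f_j(0))$.

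The one genuine point of divergence is in how you upgrade weak convergence to locally uniform convergence of the $f_j$. The paper works term by term: it invokes the fact that the Cauchy transform $C\colon L^p(B)\to C^\alpha(\C)$ is bounded and \emph{compact} for $\alpha\in(0,1-2/p)$, so weak convergence $\psi_{m,j}\rightharpoonup\psi_m$ in $L^p$ forces norm convergence $C\psi_{m,j}\to C\psi_m$ in $C^\alpha(\C)$; combined with the uniform geometric bound $\|C\psi_{m,j}\|_{C^\alpha}\lesssim a^m$ this yields convergence of the full series in $C^\alpha$, hence uniform convergence of $f_j$ on all of $\C$ directly. You instead first sum weakly to get $\deeb f_j\rightharpoonup h$ in $L^p$, then test against the Cauchy kernel $(w-z)^{-1}1_B(w)\in L^{p'}$ to extract pointwise convergence $f_j(z)\to f_\infty(z)$, and finally invoke normality of hydrodynamically normalized $K$-quasiconformal maps to promote this to locally uniform convergence. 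Your route is slightly softer and avoids the compactness theorem for the Cauchy transform; the paper's route is more quantitative (it delivers $C^\alpha$-convergence, not just locally uniform) and does not need the normal-family machinery for the convergence itself, only afterwards to identify the limit as $k$-quasiconformal. Both are perfectly valid.
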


\begin{proof}  Let first $f_j$ be the principal solution that has the representation
$$
f_j=z+\sum_{m=1}^\infty  C\psi_{m,j},
$$
where $C$ is the Cauchy transform.
 All the functions $\psi_{m,j}$
are supported in the ball $B$, and by the standard properties of $T$ (see \cite[Section 4.5.1]{AIM}),
we have $\|\psi_{m,j}\|_{L^p(B)}\leq c a^m$ for all $j$, where
$a=a(p, k)<1$  as soon as if we fix $p>2$ close enough to $2$.

It is well-known that for $p>2$ the map $C\colon L^p(B)\to C^\alpha(\C)$ is bounded and compact e.g., 
\cite[Thms 4.3.11 and 4.3.14]{AIM} 
for $\alpha\in (0, 1-2/p)$. Here clearly the homogeneous norm for $C^\alpha$ used in \cite{AIM} can be replaced by the non-homogenous norm 
$$
\|f\|_{C^\alpha}(\C)\coloneqq \|f\|_{L^\infty(\C)}+\sup_{z,w}|f(z)-f(w)||z-w|^{-\alpha}
$$
 by the good decay of the Cauchy transforms of compactly supported functions. 
We may thus deduce from the weak convergence of $\psi_{m,j}$ in $L^p(B)$ that for each $m\geq 1$ the term $C\psi_{m,j}$
converges in the $C^\alpha (\C)$-norm  to an element $g_m\in C^\alpha(\C)$. Moreover, we have the uniform bounds  $\|C\psi_{m,j}\|_{C^\alpha(\C)}\leq Ca^m$ and $ \| g_m\|_{C^\alpha (\C)}\leq Ca^m$ for all $m,j\geq 1$. This clearly yields the uniform convergence
of the principal solutions  
\begin{equation}\label{psl}
f_j\to f_\infty=z+\sum_{m=1}^\infty  C\psi_{m} \quad\text{as}\quad j\to\infty .
\end{equation}
The  limit $f_\infty$ is $k$-quasiconformal from the normal family property of hydrodynamically normalized
$k$-quasiconformal maps with dilatations supported in a fixed ball.

Finally, to treat the $3$-point normalized solutions $F_j$, simply
observe we may write them in  terms of the  principal solution as
$$
F_j(z)=(f_j(1)-f_j(0))^{-1}(f_j(z)-f_j(0)).
$$
Thus $(F_j)$ converges uniformly to the $k$-quasiconformal map 
$$ 
F_\infty (z)\coloneqq (f_\infty(1)-f_\infty(0))^{-1}(f_\infty(z)-f_\infty(0)).
$$
\end{proof}

Our second  auxiliary result verifies that normalized $k$-quasiconformal maps whose dilatations agree in a large
ball are close to each other near the center of the ball.
\begin{lemma}\label{qc_locality} Let $k<1$ and  assume that both $f\colon \C\to\C$ and $g\colon \C\to\C$  are $k$-quasiconformal homeomorphisms that satisfy the  $3$-point normalization and,  moreover
$$
\mu_g=\mu_f \quad\textrm{in}\;\; B(0,L),
$$
where $L\geq 1.$ Then for any $R<L$ we have
$$
\sup_{|z|\leq R}|g(z)-f(z)|\leq \varepsilon (L,k,R),
$$
where  $\lim_{L\to\infty}\varepsilon (L,k,R)= 0$ for any fixed $k,R$.
\end{lemma}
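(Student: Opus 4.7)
The plan is to form the composition $h \coloneqq g\circ f^{-1}$ and reduce everything to the statement that a normalized $K$-quasiconformal self-map of $\overline{\C}$ fixing $0,1,\infty$ that is conformal on a very large disc must be close to the identity on compact sets. Concretely, $h$ is a $K^2$-quasiconformal homeomorphism of $\overline{\C}$ (with $K = (1+k)/(1-k)$) fixing $0,1,\infty$, and the composition rule for Beltrami coefficients gives, for $w = f(z)$,
$$\mu_h(w) = \frac{\mu_g(z) - \mu_f(z)}{1 - \overline{\mu_f(z)}\,\mu_g(z)}\cdot\frac{\overline{\partial_z f(z)}}{\partial_z f(z)} .$$
The hypothesis $\mu_f \equiv \mu_g$ on $B(0,L)$ therefore forces $\mu_h \equiv 0$ on $f(B(0,L))$, i.e.\ $h$ is holomorphic on $f(B(0,L))$. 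By the standard quasisymmetric distortion estimates for normalized $k$-quasiconformal homeomorphisms of $\overline{\C}$ (see e.g.\ \cite{AIM}), there is a function $\rho(L,k)$ with $\rho(L,k)\to\infty$ as $L\to\infty$ such that $f(B(0,L)) \supset B(0,\rho(L,k))$, so $h$ is conformal on an arbitrarily large disc as $L$ grows.

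I would then argue by contradiction. Suppose the conclusion fails: there exist $R\geq 1$, $\delta > 0$, a sequence $L_n\to\infty$, and pairs $(f_n,g_n)$ satisfying the hypotheses at scale $L_n$ with $\sup_{|z|\leq R}|g_n(z)-f_n(z)|\geq \delta$. The family of $3$-point normalized $k$-quasiconformal homeomorphisms is sequentially compact in the topology of locally uniform convergence; passing to a subsequence we may assume $f_n\to f_\infty$, $g_n\to g_\infty$, and $h_n\to h_\infty$ locally uniformly on $\C$, with all limits $3$-point normalized and $K^2$-quasiconformal (for $h_\infty$). Each $h_n$ is holomorphic on $B(0,\rho(L_n,k))$ and the radii tend to infinity, so Weierstrass's theorem applied on discs of increasing size shows $h_\infty$ is entire. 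As a quasiconformal (hence injective) self-homeomorphism of $\overline{\C}$ fixing three points, $h_\infty = \operatorname{id}$.

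To finish, write $g_n(z)-f_n(z) = h_n(f_n(z)) - f_n(z)$ for $|z|\leq R$. Because the normalized family $\{f_n\}$ is equicontinuous, the image $\{f_n(z) : |z|\leq R,\ n\geq 1\}$ is contained in some fixed ball $B(0,M)$ with $M = M(R,k)$, on which $h_n \to \operatorname{id}$ uniformly. Hence $\sup_{|z|\leq R}|g_n(z)-f_n(z)| \to 0$, contradicting the standing hypothesis. Defining
$$\varepsilon(L,k,R) \coloneqq \sup\bigl\{\sup_{|z|\leq R}|g(z)-f(z)| : f,g \text{ admissible at scale } L\bigr\}$$
gives the claimed modulus, with $\varepsilon(L,k,R)\to 0$ as $L\to\infty$. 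The only non-formal input is the distortion bound $\rho(L,k)\to\infty$, which is a quantitative form of the quasisymmetry of normalized $k$-quasiconformal maps; the rest is a normal families argument combined with Weierstrass's theorem.
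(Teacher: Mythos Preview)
Your proof is correct. Both your argument and the paper's begin with the same key observation: the composition $h = g\circ f^{-1}$ (the paper uses $f\circ g^{-1}$, which is symmetric) is conformal on the image of $B(0,L)$, hence on a disc whose radius tends to infinity with $L$ by quasisymmetry of normalized $k$-quasiconformal maps.

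From there the two proofs diverge. The paper proceeds \emph{directly and quantitatively}: it rescales $h$ to a univalent function $H$ on the unit disc with $H(0)=0$ and $H(1/r_2)=1/r_2$, then applies Koebe-type distortion estimates to force $H'(0)\to 1$ and uses a universal second-derivative bound to conclude $|H(z)-z|\le \varepsilon|z|$ on the relevant small disc. This yields an (in principle) explicit modulus $\varepsilon(L,k,R)$. Your argument is a \emph{compactness/normal-families} proof: assuming failure along a sequence, you pass to limits using the precompactness of $3$-point normalized $k$-quasiconformal maps, invoke Weierstrass to see that the limit $h_\infty$ is entire, and conclude $h_\infty=\mathrm{id}$ since it is an injective entire map fixing $0,1,\infty$. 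Your route is slightly softer and perhaps more conceptual, but non-constructive; the paper's route gives a more explicit rate at the cost of invoking the Koebe machinery. Both are standard and perfectly adequate for the purposes of the paper, which only needs the qualitative conclusion $\varepsilon(L,k,R)\to 0$.
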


\begin{proof} First of all, quasisymmetry (see \cite[Def. 3.2.1 and Thm 3.5.3]{AIM}) and the normalization of $g$ imply that
$g(B(0,R))\subset B(0,r_1)$ and $g(B(0,L))\supset B(0,r_2)$
with $r_{1}=r_1(R,k)$   and $r_2=r_2(L,k)\to\infty$ as $L\to\infty.$
Writing $f=h\circ g$, it follows that  $h$ is analytic in $B(0,r_2)$ with $h(0)=0$  and $h(1)=1$.
Then the function 
$$
H(z)\coloneqq  r_2^{-1}h(r_2z)
$$
is analytic and univalent in $B(0,1)$ and satisfies  the normalization $H(0)=0$, $H(1/r_2)=1/r_2$. By 
the Koebe type estimates (\cite[(2.74)]{AIM}) it is clear that $H'(0)\to 1$ as $L\to\infty$. Since the second derivative of $H$ has a universal bound on say $B(0,1/2)$ (\cite[Thm. 1.8]{CG}) we deduce that 
for any given $\varepsilon >0$ we  have for large enough $L$
$$
|H(z)-z|\leq \varepsilon |z| \leq \varepsilon r_1/r_2\quad \textrm{for} \quad |z|<r_1/r_2.
$$
This implies that $|f(z)-g(z)|<\varepsilon r_1$ for $|z|<R,$ proving the lemma.
\end{proof}

Next we have a global variant of Lemma \ref{lisaco:2.1}.

\begin{lemma}\label{lisaco:2.2}
Let the dilatations  $\mu_j$ satisfy $|\mu_j|\leq k<1$ for   $j=1,2,\ldots$. For any $L>1$ we write
$\mu_{j,L}\coloneqq \mu_j 1_{B(0,L)}$ and set
$\psi_{m,j,L}\coloneqq  \mu_{j,L}T\mu_{j,L}\ldots T\mu_{j,L},$ where $\mu_{j,L}$ appears $m$ times.  
Assume  that for every $m\geq 1$ and $L>1$ there is the weak convergence
$$
\psi_{m,j,L}\overset{w}{\to} \psi_{m,L} \quad \textrm{as}\;\; j\to\infty
$$
in $L^p(\C)$ for all  $1 < p< \infty.$ Then the $3$-point normalized solution
 $F_j$ of the Beltrami equation $\deeb F_j=\mu_j\dee F_j$
 converges locally uniformly on $\C$ to a
$k$-quasiconformal homeomorphism  $F$.
\end{lemma}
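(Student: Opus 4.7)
The plan is to reduce to the compactly supported case already handled in Lemma \ref{lisaco:2.1}, using Lemma \ref{qc_locality} to control truncation errors uniformly in $j$.

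For each $L\geq 1$ the truncated dilatation $\mu_{j,L}\coloneqq\mu_j1_{B(0,L)}$ is compactly supported and still obeys $|\mu_{j,L}|\leq k$, so the measurable Riemann mapping theorem provides a unique $3$-point normalized $k$-quasiconformal homeomorphism $F_{j,L}$ with $\deeb F_{j,L}=\mu_{j,L}\dee F_{j,L}$. By assumption, for every fixed $L$ and $m$ the iterate $\psi_{m,j,L}$ converges weakly in $L^p$ to $\psi_{m,L}$ for all $1<p<\infty$; hence Lemma \ref{lisaco:2.1} applies and yields a $k$-quasiconformal homeomorphism $F_{\infty,L}$ with $F_{j,L}\to F_{\infty,L}$ locally uniformly on $\C$.

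Since $\mu_j$ and $\mu_{j,L}$ coincide on $B(0,L)$, Lemma \ref{qc_locality} gives, for every $R<L$,
$$\sup_{|z|\leq R}|F_j(z)-F_{j,L}(z)|\leq \varepsilon(L,k,R),$$
with $\varepsilon(L,k,R)\to 0$ as $L\to\infty$. Crucially, this bound is independent of $j$. Similarly, for $L<L'$ the dilatations $\mu_{j,L}$ and $\mu_{j,L'}$ agree on $B(0,L)$, so Lemma \ref{qc_locality} yields $\sup_{|z|\leq R}|F_{j,L}(z)-F_{j,L'}(z)|\leq \varepsilon(L,k,R)$ for $R<L$. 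Letting $j\to\infty$ in the latter (using the locally uniform convergence $F_{j,L}\to F_{\infty,L}$ and $F_{j,L'}\to F_{\infty,L'}$) shows that $\sup_{|z|\leq R}|F_{\infty,L}(z)-F_{\infty,L'}(z)|\leq \varepsilon(L,k,R)$, so the sequence $(F_{\infty,L})_L$ is uniformly Cauchy on every ball and converges locally uniformly to a continuous function $F_\infty$ on $\C$.

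To conclude the convergence $F_j\to F_\infty$, fix $R>0$ and $\eps>0$. Pick $L$ so large that $\varepsilon(L,k,R)<\eps/3$ and $\sup_{|z|\leq R}|F_{\infty,L}-F_\infty|<\eps/3$; then pick $j_0$ with $\sup_{|z|\leq R}|F_{j,L}-F_{\infty,L}|<\eps/3$ for $j\geq j_0$. The triangle inequality combined with the truncation bound produces $\sup_{|z|\leq R}|F_j-F_\infty|<\eps$ for $j\geq j_0$, giving local uniform convergence. The limit is a $k$-quasiconformal homeomorphism because each $F_j$ is a $3$-point normalized $k$-quasiconformal homeomorphism and such maps form a normal family whose locally uniform limits are again $k$-quasiconformal homeomorphisms, the $3$-point normalization (preserved in the limit) preventing degeneration to a constant. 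The only real obstacle is bookkeeping the three approximations; this is resolved by the fact that Lemma \ref{qc_locality} furnishes a truncation error depending only on $L,k,R$ and not on $j$.
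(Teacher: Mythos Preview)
Your proof is correct and follows essentially the same route as the paper's: truncate to $\mu_{j,L}$, apply Lemma~\ref{lisaco:2.1} to get $F_{j,L}\to F_{\infty,L}$, use Lemma~\ref{qc_locality} (with its $j$-independent error) to show $(F_{\infty,L})_L$ is locally uniformly Cauchy, and then conclude $F_j\to F_\infty$ by the same three-term triangle inequality. The only cosmetic difference is that the paper phrases the final $k$-quasiconformality of $F_\infty$ as inherited from the $F_{\infty,L}$ rather than from the $F_j$, but this is immaterial.
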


\begin{proof} Fix $R>0$. For any $L=1,2,3,\ldots$  let  $F_{j,L}$ be the $3$-point-normalized solution to the Beltrami equation
$$
\deeb F_{j,L}=\mu_{j,L}\dee F_{j,L} .
$$
By Lemma \ref{lisaco:2.1}, for every $L\geq 1$ we have uniform convergence $F_{j,L}\to F_{\infty,L}$ as $j\to\infty$, where
$F_{\infty,L}$ is a $k$-quasiconformal homeomorphism. 
Given $\varepsilon >0$, Lemma \ref{qc_locality} shows that we may choose $L_0\coloneqq L_0(k,\varepsilon, R)$ so  that 
$$
 |F_{j,L}-F_{j,L'}|\leq \varepsilon \quad
\textrm{in}\;\; z\in B(0,R),\qquad \textrm{for}\quad L,L'\geq L_0 .
$$
A fortiori,
$$
 |F_{\infty,L}-F_{\infty,L'}|\leq \varepsilon\quad
\textrm{in}\;\; z\in B(0,R),\qquad \textrm{for}\quad L,L'\geq L_0 .
$$
We  deduce that the sequence 
$(F_{\infty,L})_{L\geq 1}$ is Cauchy in $C(B(0,R)), $ so that $F_{\infty,L}\to F_\infty$
uniformly on $B(0,R)$. Since $R$ was arbitrary, we see that $F_\infty$ is a 3-point normalized $k$-quasiconformal homeomorphism of the plane. 

It remains to check  that also $F_j\to F_\infty$
uniformly on $B(0,R)$ for any given $R\geq 1$. To  this end, take $L\geq L_0$ and estimate
\begin{align*}
&\limsup_{j\to\infty}\|F_{j}-F_\infty\|_{C(B(0,R))}\\
\leq& 
\limsup_{j\to\infty} \| F_{j}-F_{j,L}\|_{C(B(0,R))} + \limsup_{j\to\infty}\| F_{j,L}-F_{\infty,L}\|_{C(B(0,R))} + \| F_{\infty,L}-F_\infty\|_{C(B(0,R))}\\
&\leq \varepsilon +0+\varepsilon \;=\; 2\varepsilon,
\end{align*}
where  we  used Lemma \ref{qc_locality} again to estimate the first term.
\end{proof}

We are ready to establish the first statement in Theorem \ref{th:main}(i).

\begin{proof}[Proof of Theorem \ref{th:main}(i)] 
Let us first assume that  the Beltrami envelope function $\phi$ in the statement of Theorem \ref{th:main}(i)   (see Definition \ref{de:ref})
is compactly supported.
Observe that in this case $\phi$ is an
envelope function in the sense of Section \ref{dmd-sec} (Definition \ref{def:env}), since
taking $R$ large enough in  Definition \ref{de:ref} we may
apply the bound $|\phi|\leq 1$ to
obtain for any $1 < p< \infty.$ 

$$
\|\Delta_h\phi\|_{L^p(\C)}\leq 2^{1-1/p} |supp(\phi)|^{1/p}
\|\Delta_h\phi\|_{L^1(\C)}^{1/p}\leq C'|h|^{\alpha/p}\quad
\textrm{for}\;\; |h|\leq 1.
$$

Lemma \ref{prod3} shows that $\phi\, B_\delta$ is
a stochastic multiscale function. By Corollary \ref{co:main}, for each $m\geq1$
there exists a (deterministic) limit function $\psi_m$ such that, with probability one,
$\psi_{m,j}\coloneqq \mu_{2^{-j}}T\mu_{2^{-j}}\ldots T\mu_{2^{-j}}$ converges weakly
to  $\psi_m$  in $L^p(\C)$ for each $1 < p< \infty$ and each $m$. The statement of part (i) then follows from Lemma 
\ref{lisaco:2.1}.

In the case where the envelope $\phi$ is not compactly supported,
we use Lemma \ref{lisaco:2.2}  to reduce to the compactly supported case. For this reduction it is enough to note that  $\phi 1_{B(0,R)}$ is an envelope function if $\phi$ is a Beltrami envelope function, by essentially the same argument as above -- one uses additionally the observation that a characteristic function of a ball is an  envelope function.
\end{proof}

\begin{lemma}\label{le:locality} Assume that $k\in [0,1)$ and let $(f_j)$ and $(g_j)$ be sequences of locally uniformly convergent  $k$-quasiconformal maps in a
 domain $\Omega\subset\C$ such that the limit functions $f=\lim_{j\to\infty} f_j$ and $g=\lim_{j\to\infty} g_j$ are  non-constant.
Assume also that $  |\mu_{f_j}-\mu_{g_j}|\leq \varepsilon$ in $\Omega$ for all  $j\geq 1.$
Then 
$$
|\mu_f-\mu_g|\leq \varepsilon\frac{1+k^2}{1-k^2}\quad \textrm{in}\;\; \Omega .
$$
\end{lemma}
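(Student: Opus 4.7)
The plan is to reduce the estimate to a statement about the single quasiconformal composition $h_j \coloneqq g_j \circ f_j^{-1}$, using the classical chain rule for Beltrami coefficients. If $g_j = h_j \circ f_j$, then (recall $|\partial f_j| > 0$ a.e. for a quasiconformal map and set $p_j \coloneqq \overline{\dee f_j}/\dee f_j$, so $|p_j|=1$) one has
$$
\mu_{g_j} \;=\; \frac{\mu_{f_j} + (\mu_{h_j}\circ f_j)\, p_j}{1 + \overline{\mu_{f_j}}(\mu_{h_j}\circ f_j)\, p_j},
$$
which rearranges algebraically to the key identity
$$
\mu_{g_j} - \mu_{f_j} \;=\; (\mu_{h_j}\circ f_j)\, p_j\, \bigl(1 - \overline{\mu_{f_j}}\mu_{g_j}\bigr).
$$
Taking absolute values and using $|\mu_{f_j}|,|\mu_{g_j}|\leq k$ gives $1-k^2\leq |1-\overline{\mu_{f_j}}\mu_{g_j}|\leq 1+k^2$, so the hypothesis $|\mu_{g_j}-\mu_{f_j}|\leq \varepsilon$ translates to
$$
|\mu_{h_j}\circ f_j| \;=\; \frac{|\mu_{g_j}-\mu_{f_j}|}{|1-\overline{\mu_{f_j}}\mu_{g_j}|} \;\leq\; \frac{\varepsilon}{1-k^2}.
$$
We may assume $\varepsilon<1-k^2$, since otherwise $\varepsilon(1+k^2)/(1-k^2)\geq 1+k^2\geq 2k$ and the claim is automatic from $|\mu_f-\mu_g|\leq 2k$. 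Thus each $h_j$ is $k'$-quasiconformal on $f_j(\Omega)$ with $k'\coloneqq \varepsilon/(1-k^2)<1$.

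Next I pass to the limit. By standard compactness of $k$-quasiconformal families, the non-constant limits $f,g$ are themselves $k$-quasiconformal homeomorphisms on $\Omega$; moreover $f_j^{-1}\to f^{-1}$ locally uniformly on $f(\Omega)$, so $h_j = g_j\circ f_j^{-1}\to g\circ f^{-1}\eqqcolon h$ locally uniformly on $f(\Omega)$. Since $h$ is a non-constant locally uniform limit of $k'$-quasiconformal maps, the same classical compactness gives that $h$ is itself $k'$-quasiconformal, i.e.
$$
|\mu_h|\leq \frac{\varepsilon}{1-k^2}\quad\text{a.e.\ on }f(\Omega).
$$

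Finally, I apply the very same chain-rule identity to the factorization $g = h\circ f$ (valid since $f$ is a homeomorphism), obtaining
$$
|\mu_g-\mu_f| \;=\; |\mu_h\circ f|\cdot|1-\overline{\mu_f}\mu_g|\;\leq\;\frac{\varepsilon}{1-k^2}\cdot(1+k^2)\;=\;\varepsilon\,\frac{1+k^2}{1-k^2}
$$
on $\Omega$, which is the claim. The one place requiring care is the limiting step: one must justify that the pointwise bound on $\mu_{h_j}$ descends to the same $L^\infty$ bound on $\mu_h$. This is the classical fact that non-constant locally uniform limits of $K$-quasiconformal maps are $K$-quasiconformal (equivalently, weak-$*$ lower semicontinuity of the dilatation along convergent quasiconformal sequences), which is the only non-algebraic input used.
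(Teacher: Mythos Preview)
Your proof is correct and follows essentially the same approach as the paper: both arguments use the chain rule for Beltrami coefficients to bound the dilatation of the composition $g_j\circ f_j^{-1}$ (the paper uses $f_j\circ g_j^{-1}$, which is immaterial), pass to the limit via the stability of $K$-quasiconformality under locally uniform convergence, and then apply the chain rule once more to the limit maps. The only minor differences are that you rearrange the composition formula algebraically rather than quoting the formula for $\mu_{f\circ g^{-1}}$ directly, and you explicitly dispose of the degenerate case $\varepsilon\geq 1-k^2$; the paper instead localizes to a small ball via quasisymmetry to make the varying domains of $h_j$ concrete, a point you pass over somewhat quickly but which is routine.
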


\begin{proof} Take any ball  $B(z_0,R)\subset\Omega$. By  considering
$f_j(z)-f_j(z_0)$  and $g_j(z)-g_j(z_0)$ instead, we may assume that $g_j(z_0)=f_j(z_0)=0$ for all $j.$ The assumptions together with the quasisymmetry property of the maps imply that
if  $r>0$ is taken small enough, then $B(0,r)\subset g_j(B(z_0,R))$
for all $j\geq j_0$, and hence the map $f_j\circ g_j^{-1}$
is well-defined  in $B(0,r)$
for $j\geq j_0$. We may compute (see \cite[(13.37)]{AIM})
\begin{equation}\label{mu_comp}
\mu_{ f_j\circ g_j^{-1}}(w)=\left({ \frac{\mu_{f_j}-{\mu_{g_j}}}{1-\mu_{f_j}\overline{\mu_{g_j}}}\frac{\dee g_j}{\overline{\dee g_j}} }\right)\circ g^{-1}(w),\quad \textrm{for a.e.}\;\; w\in B(0,r).
\end{equation}
In particular, $|\mu_{  f_j\circ g_j^{-1}}|\leq \varepsilon(1-k^2)^{-1}$ and letting
$k\to\infty$ we infer by the local uniform convergence that
$|\mu_{f\circ g^{-1}}|\leq \varepsilon(1-k^2)^{-1}$ in the neighbourhood of $z_0.$ In particular, applying  formula \eqref{mu_comp} to $f$ and $g$ we obtain
$$
\left|\mu_{f}-\mu_{g}\right|\leq (1+k^2)\left|\frac{\mu_{f}-\mu_{g}}{1-\mu_{f}\overline{\mu_{g}}}\right|\leq 
(1+k^2)|\mu_{f\circ g^{-1}}|\leq \varepsilon\frac{1+k^2}{1-k^2}.
$$
\end{proof}

Our next auxiliary result is quite specialized to our situation. Note that the existence of the deterministic homogenization limit $F_\infty$ is guaranteed by part 
(i) of Theorem \ref{th:main} that we already verified.

\begin{lemma}\label{le:constant} 
Suppose  in Theorem \ref{th:main}(i) the Beltrami envelope function $\phi$  is constant on the complex plane.  Then
the dilatation $\mu$ of the homogenization limit  $F_\infty\colon \C\to\C $ is constant on  $\C$, 
and therefore $F_\infty$ is linear: \; 
$$
F_\infty(z)=\frac{1}{1+A}z+\frac{A}{1+A}\overline{z},
$$
 where the constant $A=\mu_{F_\infty}$ satisfies $|A|<1$.
\end{lemma}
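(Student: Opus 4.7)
The plan is to exploit the translation-invariance of the law of $\mu_\delta$ (valid because $\phi\equiv c$ and the bump field is i.i.d.\ along the lattice $\delta\Z^2$), together with the deterministic nature of the limit $F_\infty$ produced by Theorem \ref{th:main}(i), in order to derive a functional equation for $F_\infty$ rigid enough to force it to be $\R$-linear.

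Fix $z_0\in\C$ and set $m_\delta\coloneqq \lfloor z_0/\delta\rfloor\in\Z^2$, $z_0^{(\delta)}\coloneqq \delta m_\delta$, so that $z_0^{(\delta)}\to z_0$. Introduce the measure-preserving shift $\sigma_m\colon \widetilde\Omega\to\widetilde\Omega$, $(\sigma_m\widetilde\omega)_n=\omega_{n+m}$. Since $\phi\equiv c$ is constant, the shifted coefficient $\widetilde\mu_\delta(z)\coloneqq \mu_\delta(z+z_0^{(\delta)})$ satisfies $\widetilde\mu_\delta(z,\widetilde\omega)=\mu_\delta(z,\sigma_{m_\delta}\widetilde\omega)$, so the associated $3$-point normalised solution $\widetilde F_\delta$ of $\deeb \widetilde F_\delta=\widetilde\mu_\delta\dee \widetilde F_\delta$ obeys $\widetilde F_\delta(z,\widetilde\omega)=F_\delta(z,\sigma_{m_\delta}\widetilde\omega)$. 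In particular $\widetilde F_\delta(z)$ has the same distribution as $F_\delta(z)$, and hence converges to the deterministic limit $F_\infty(z)$ in probability. On the other hand, the translated map $z\mapsto F_\delta(z+z_0^{(\delta)})$ solves the Beltrami equation with coefficient $\widetilde\mu_\delta$, and post-composing with the affine map that sends $F_\delta(z_0^{(\delta)}),F_\delta(1+z_0^{(\delta)})$ to $0,1$ gives the pointwise identity
\[
\widetilde F_\delta(z)\;=\;\frac{F_\delta(z+z_0^{(\delta)})-F_\delta(z_0^{(\delta)})}{F_\delta(1+z_0^{(\delta)})-F_\delta(z_0^{(\delta)})}.
\]
Using the almost sure local uniform convergence $F_\delta\to F_\infty$ from Theorem \ref{th:main}(i), the continuity of $F_\infty$, and the fact that $F_\infty(1+z_0)\neq F_\infty(z_0)$ since $F_\infty$ is a homeomorphism, the right-hand side converges almost surely to $(F_\infty(z+z_0)-F_\infty(z_0))/(F_\infty(1+z_0)-F_\infty(z_0))$. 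Matching the two limits produces the functional equation
\begin{equation*}
F_\infty(z+z_0)-F_\infty(z_0)\;=\;C(z_0)\,F_\infty(z),\qquad C(z_0)\coloneqq F_\infty(1+z_0)-F_\infty(z_0),
\end{equation*}
valid for every $z,z_0\in\C$.

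Next I would solve this functional equation. Swapping the roles of $z$ and $z_0$ and subtracting yields the symmetrised identity $(1-C(z))F_\infty(z_0)=(1-C(z_0))F_\infty(z)$. If there exists $z_1$ with $C(z_1)\neq 1$, then $F_\infty(z)=K(1-C(z))$ for the constant $K=F_\infty(z_1)/(1-C(z_1))$; substituting this back into the functional equation forces $C$ to be a continuous multiplicative character of $(\C,+)$ with $C(0)=1$, hence $C(z)=\exp(L(z))$ for a continuous $\R$-linear $L\colon\C\to\C$. But then $F_\infty(z)=K(1-e^{L(z)})$ cannot be a homeomorphism of $\C$: if $L$ has real rank two then $e^{L}$ is periodic along the lattice $L^{-1}(2\pi i\Z)$ and $F_\infty$ fails injectivity, while if $L$ has rank at most one then $F_\infty(\C)$ lies in a proper subset of $\C$ (a ray, a logarithmic spiral, or a circle). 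Therefore $C\equiv 1$, and the functional equation reduces to the Cauchy equation $F_\infty(z+z_0)=F_\infty(z)+F_\infty(z_0)$. Continuity together with $F_\infty(0)=0$ yields $F_\infty(z)=\alpha z+\beta\bar z$; the normalisation $F_\infty(1)=1$ gives $\alpha+\beta=1$, and $k$-quasiconformality gives $|\beta/\alpha|<1$. Writing $A\coloneqq \beta/\alpha=\mu_{F_\infty}$, so that $\alpha=1/(1+A)$ and $\beta=A/(1+A)$, produces the asserted formula.

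The hard part will be pinpointing the correct mechanism by which translation invariance of the statistics of $\mu_\delta$ transfers to the rigidly normalised limit $F_\infty$: the $3$-point normalisation breaks translation equivariance at each fixed $\delta>0$, and the functional equation only emerges in the $\delta\to 0$ limit, through the M\"obius conjugation identity displayed above. The subsequent solution of the functional equation is elementary provided one uses the full homeomorphism property of $F_\infty$ to eliminate the exponential family of spurious solutions.
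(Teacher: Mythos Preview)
Your proof is correct. The mechanism you isolate --- lattice translation invariance of the law of $\mu_\delta$ combined with the deterministic nature of the limit --- is exactly the one the paper uses, and your displayed identity for $\widetilde F_\delta$ matches theirs. The difference lies only in how the resulting functional equation is exploited.

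The paper restricts to dyadic rational shifts $b$, which lie exactly on the lattice $2^{-j}\Z^2$ for large $j$ (so no approximation $z_0^{(\delta)}\to z_0$ is needed), and arrives at $F_\infty(z)=a\,F_\infty(z+b)+c$ for such $b$. Rather than solving this for $F_\infty$, the paper observes that affine post-composition does not change the complex dilatation, so $\mu_{F_\infty}(\cdot)=\mu_{F_\infty}(\cdot+b)$ in $L^\infty$ for every dyadic rational $b$; density then forces $\mu_{F_\infty}$ to be constant, and the explicit linear formula follows from uniqueness of the $3$-point normalised solution with a given constant dilatation. Your route --- solving the functional equation for $F_\infty$ directly, ruling out the exponential solutions $K(1-e^{L(z)})$ via the global homeomorphism property --- is more hands-on but self-contained: it never refers to $\mu_{F_\infty}$ at all. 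The paper's shortcut (pass to the dilatation, use periodicity with dense periods) is quicker; yours gives a bit more, namely the linearity of $F_\infty$ without invoking the uniqueness theorem for the normalised Beltrami solution.
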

\begin{proof}
\newcommand{\Q}{{\mathbf Q}}
Let $F_j$ be defined via \eqref{eq:Fj}, and let  $B_{2^{-j}}$ be the random bump field defined by \eqref{eq:rbf1}.
Denote by $\Q^2_d$ the set of dyadic rational points in $\C$, i.e., numbers
of the form $(n+mi)2^{-\ell},$ where $m,n$ and $\ell\geq 1$ are integers. Since now $\mu_{F_j}=a B_{2^{-j}}$, where $a$ is a constant with $|a|<1$, we have  for  any $b\in \Q^2_d$ 
$$
\mu_{F_j(\cdot+b)}\sim\mu_{F_j(\cdot)}\quad\textrm{for}\quad j\geq j_0(b)
$$
where $\sim$ stands for  equivalence in distribution. As a consequence of the 3-point normalization we may  write for  $j\geq j_0(b)$
$$
F_j(z)\sim a_jF_j(z+b) +c_j,
$$
where $a_j=(F_j(b+1))-F_j(b))^{-1}$ and $c_j=-a_jF_j(b).$ In the limit $j\to\infty$ we thus obtain
$$
F_\infty(z) = aF_\infty(z+b) +c
$$
with constants $a\not=0$ and $c$ that depend only on $b$. This implies that
$$
\mu_{F_\infty}(z)= \mu_{F_\infty}(z+b),
$$
where the
equality is in the sence of $L^\infty$-functions.

Therefore $\mu$ is periodic on $\C$ with 
dyadic rational periods, and this easily implies  that $\mu$
is constant. Finally, for any $A\in\D$ the linear map $z\mapsto \frac{1}{1+A}z+\frac{A}{1+A}\overline{z}$ satisfies the 3-point normalization and has dilatation $A$, whence it is the unique quasiconformal homeomorphism $\C\to\C$ with these properties.
\end{proof}

We are now ready to prove the second statement of Theorem \ref{th:main}.

\begin{proof}[Proof of Theorem \ref{th:main}(ii)] 
We first define the function $h_{(g,X)}$
with the help of a reference homogenization limit. For any $a\in\{ |w|<1\}$ let $F_a$
be the unique deterministic limit map of the homogenization problem
$$
\deeb F_{a,j}(z)=a B_{2^{-j}}(z)\dee F_{a,j}.
$$
By Lemma \ref{le:constant} $F_a$ has constant dilatation in the whole plane; let us denote by $h_{(g,X)}(a)$  its value. Part (i) of
Theorem \ref{th:main} and Lemma \ref{le:locality} yield immediately that the map $a\to h_{(g,X)}(a)$
 is continuous. 

Assume next that the envelope function $\phi$ is continuous in a neighbourhood of $z_0$.
with $\phi(z_0)=a$. Then the dilatations of the sequences $F_{a,j}$
and $F_j$ (where $F_j$ is as in the Theorem, see \eqref{eq:Fj}) are $\varepsilon$-close
in a small enough neighbourhood $U$ of $z_0$. 
Thus Lemma \ref{le:locality} shows that the dilatation of
the homogenization limit $F_\infty$ differs from $h_{(g,X)}(a)$ by less than
$\varepsilon(1+k^2)(1-k^2)^{-1}$ in a small enough neighbourhood $U$,
and we deduce the continuity of $\mu_{F_\infty}$ and the equality
$ \mu_{F_\infty}(z_0) =h_{(g,X)}(a)=h_{(g,X)}(\phi(z_0)).$
\end{proof}

We state one more auxilary result which actually contains a more general statement than what is needed in the last part of Theorem \ref{th:main}.
  
\begin{lemma}\label{le:identitylimit}
Assume that $g$ is invariant under rotation by the angle $\pi/2 $: 
$$
g(z,t)=g(iz,t)\quad \textrm{for all }\quad z\in\C, t\in\R.
$$
Moreover, assume that $X$ is such that the random field
$g(\cdot ,X)$ is symmetric, i.e $g(\cdot ,X) \sim -g(\cdot ,X)$.
Then $h_{(g,X)}(a)=0$ for every $a\in\D.$
\end{lemma}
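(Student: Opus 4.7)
The plan is to exploit both symmetry hypotheses simultaneously through the rotation $z \mapsto iz$ of the plane. First I would verify that the random fields $-aB_\delta(iz)$ and $aB_\delta(z)$ agree in law on $\C$. The identity $iz/\delta - n = i(z/\delta + in)$ combined with $g(z,t) = g(iz,t)$ gives
$$g(iz/\delta - n, t) = g(z/\delta - (-in), t),$$
so after the change of summation index $m = -in$ (a bijection of $\Z^2 \cong \Z[i]$ since $-i$ is a unit),
$$B_\delta(iz) = \sum_{m \in \Z^2} g_{[m,\delta]}(z, X_{im}).$$
Since $(X_{im})_{m\in\Z^2}$ is i.i.d.\ with marginal $X$, this random field has the same law as $B_\delta(z)$. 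Combining with $B_\delta \sim -B_\delta$ (which is immediate from $g(\cdot,X) \sim -g(\cdot,X)$ and independence of the $X_n$) gives the claim.

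Next, I would introduce the normalized rotation
$$\hat F(z) \coloneqq F(iz)/F(i)$$
of any $3$-point normalized quasiconformal homeomorphism $F$ (well-defined because $F(i)\neq 0$ whenever $F(0)=0$). A direct Wirtinger computation yields $\mu_{\hat F}(z) = -\mu_F(iz)$, while $\hat F(0)=0$, $\hat F(1)=1$, $\hat F(\infty)=\infty$. Applying this to $F = F_{a,j}$ produces a random $3$-point normalized $k$-quasiconformal map $\hat F_{a,j}$ with dilatation $-aB_{2^{-j}}(iz)$; by the preceding paragraph and the uniqueness of the normalized solution of the Beltrami equation with a given coefficient, $\hat F_{a,j}$ then has the same law as $F_{a,j}$.

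Now I would pass to the limit $j \to \infty$. By Theorem~\ref{th:main}(i), $F_{a,j} \to F_a$ locally uniformly almost surely; since $F_a$ is a homeomorphism fixing $0$, we have $F_a(i) \neq 0$ and hence $\hat F_{a,j}(z) = F_{a,j}(iz)/F_{a,j}(i)$ converges locally uniformly almost surely to the deterministic function $\hat F_a(z) \coloneqq F_a(iz)/F_a(i)$. Since $F_{a,j}$ and $\hat F_{a,j}$ share their law, their deterministic limits must coincide, yielding
$$F_a(z) = F_a(iz)/F_a(i) \quad \textrm{for all}\ z \in \C.$$
Finally, by Lemma~\ref{le:constant}, $F_a$ is linear with constant dilatation $\alpha = h_{(g,X)}(a)$, and then the formula $\mu_{\hat F}(z) = -\mu_F(iz)$ applied to $\mu_{F_a} \equiv \alpha$ shows that $\hat F_a$ has constant dilatation $-\alpha$. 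The identity $F_a = \hat F_a$ then forces $\alpha = -\alpha$, that is, $h_{(g,X)}(a) = 0$.

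I expect the main obstacle to lie in the first paragraph: one must verify carefully that the two symmetry hypotheses combine through the $\Z[i]$-structure of the lattice into a single distributional symmetry of the random Beltrami coefficient, and that this distributional symmetry transfers to the normalized quasiconformal solutions via the uniqueness theorem. Once this is in place, the remaining steps form a clean symmetrization argument followed by an elementary identification of the constant dilatation.
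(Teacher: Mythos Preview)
Your proposal is correct and follows essentially the same route as the paper's proof: define $\widetilde F_j(z)=F_j(iz)/F_j(i)$, compute $\mu_{\widetilde F_j}(z)=-\mu_{F_j}(iz)$, use the two symmetry hypotheses to get $\mu_{\widetilde F_j}\sim\mu_{F_j}$, pass to the deterministic limit, and conclude via Lemma~\ref{le:constant}. Your verification of the distributional identity $-aB_\delta(iz)\sim aB_\delta(z)$ through the $\Z[i]$-relabeling is more explicit than the paper's, and your final step (reading off $\alpha=-\alpha$ directly from $\mu_{\hat F_a}=-\mu_{F_a}(i\,\cdot)$) is a slight streamlining of the paper's explicit linear identity, but the argument is otherwise the same.
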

\begin{proof}
Let $B_{\delta}$ be the random bump field defined by \eqref{eq:rbf1}.
The symmetry of $g$ together with the indepence of the $X_n$ implies the symmetry of $B_{\delta}$.
Fix $a\in\D.$
For $j\geq 1$, let $F_j$ solve the random Beltrami equation 
\begin{equation}\label{eq:symmetricbeltrami}
\deeb F_j=a B_{2^{-j}}(z) \dee F_k,
\end{equation}
and denote  $\widetilde F_j(z)=(F_j(i))^{-1}F_j(iz)$. One computes that $\mu_{\widetilde F_j}(z)=-\mu_{F_j}(iz)$. The assumptions of the lemma thus verify that 
$$
\mu_{\widetilde F_j}\sim \mu_{ F_j},
$$
whence in the limit $j\to\infty$ we deduce that $F_\infty (z)= cF_\infty (iz)$ with a constant $c\not=0$. By lemma \ref{le:constant}  we obtain the identity
$$
\frac{1}{1+A}z+\frac{A}{1+A}\overline{z}= c\Big(\frac{1}{1+A}iz-\frac{Ai}{1+A}\overline{z}\Big) \qquad \textrm{ for all}\;\;z\in\C.
$$
 The above identity is possible only if $c=-i$ and $A=0$. Thus $h_{g,X}(a)=A=0$ as was to be shown.
\end{proof}

\begin{proof}[Proof of Theorem \ref{th:main}(iii)] 
The statement that
for both of the models \eqref{mu-ex2} and \eqref{mu-ex3,5}
  the deterministic limit map is the identity map follows
immediately  from Lemma \ref{le:identitylimit}
and Theorem \ref{th:main}(ii). 
 
Finally, we show that in the generic case, the limit map is not
the identity or equivalently, that the Beltrami coefficient of the limiting map is not zero. 
To this end, we consider a very simple case of the general model. Fix a bump function $g\in C^\infty_0((0,1)^2)$  with $||g||_{\infty}\leq1$ and consider the sequence of
random dilatations $\mu_{j,a}$ that depend on the complex
parameter $a\in\D$
$$
\mu_{j,a}(z)=a 1_{[0,1]^2}(z)\sum_{n\in\Z^2}\varepsilon_n g(2^jz-n),
$$
where the $\varepsilon_n$ are an independent sequence of random
signs $\pm 1.$ Let $f_{j,a}$ be the principal solution of
the corresponding Beltrami equation, and denote by $f_a$
the almost sure deterministic limit function $f_a=\lim_{j\to\infty}
f_{j,a}.$ 
Using notation as in Lemma \ref{lisaco:2.1} (with $\mu_j=\mu_{j,a}$)
we see from \eqref{psl} that $f_a$
has the (power series) representation
$$
f_a(z)=z+\sum_{m=1}^\infty (C \psi_m)(z)=z+\sum_{m=1}^\infty a^m(C\widetilde \psi_m)(z),
$$
with $\widetilde \psi_m=\lim_{j\to\infty}\widetilde\psi_{m,j},$ where $\widetilde \psi_{m,j}\coloneqq \mu_{j,1}T\mu_{j,1}\ldots T\mu_{j,1}$ and where the almost sure weak convergence to the (deterministic) limit  $\widetilde \psi_m$ in $L^p(\C)$ for each $p>1$ again follows from  Corollary \ref{co:main}.
We claim that $f_a$ is non-linear (equivalently, the 3-point normalized limit is not the identity)
for all but countably many values of $a\in\D$, unless
$\widetilde \psi_m$ is identically $0$ for all $m:$ To see this, notice that $f_a(z)-z\to0$ as $z\to\infty,$ 
so that $f_a$ cannot be linear unless $f_a(z)-z$ is independent of $z$.
By interpreting $(C\widetilde \psi_m)(z)$ as the Taylor coefficients in the  power series representation of $a\mapsto f_a(z)-z$ above, we see that $f_a$ is non-linear for all but countably many values of $a$ unless
$C\widetilde \psi_m(z)$ is independent of $z$ for all $m,$ or equivalently $\widetilde \psi_m \equiv 0$.

It thus suffices to give an example with $\widetilde \psi_2\not\equiv 0.$
Let $h\in C^\infty_0(\C)$ be a compactly supported test function that equals 1 on $[0,1]^2$.
For $j\geq 1$ set
$$
Y_j\coloneqq \int_{\C} h\widetilde\psi_{2,j}=\int_{[0,1]^2} \mu_{j,1} T\mu_{j,1}\qquad \textrm{and}\quad Y\coloneqq \int_{\C}h\widetilde\psi_2 =\int_{[0,1]^2}\widetilde\psi_2.
$$
Then almost surely $Y=\lim_{j\to\infty}Y_j$ and $Y$ is a deterministic constant. 
We note that in this special case, the convergence is not difficult to prove directly without resorting to our general theory. In any case, we claim that the limit is non-zero for a suitable choice of $g$. As the random variables $Y_j$ are uniformly bounded, we actually have $Y=\lim_{j\to\infty}\E Y_j$.    Since the supports of 
$g(2^jz-n)$ are disjoint for different values of $n$,
and  $\E \varepsilon_n \varepsilon_{n'}=\delta_{n,n'}$
we may compute
\begin{equation}\label{eq:nonzero}
\E Y_j =\sum_{n\in\Z^2:\; 2^{-j} n\in [0,1)^2}\int_\C g(2^jz-n)Tg(2^jz-n)dz =\int_\C g(z)Tg(z)dz,
\end{equation}
where in the last step we used the translation and scaling invariance of $T.$ 

It remains to verify that
$g\in C^\infty_0((0,1)^2)$ can be chosen so that the last integral in \eqref{eq:nonzero}  is not identically zero. 
The following example can be generalized to all kernels that are not odd. 
Fix any $\varphi\in C_0^\infty(\D)$ with $0\leq\varphi\leq 1$ and $\varphi\not\equiv 0.$ If $\int_\C\varphi T\varphi =0,$ then setting $\varphi_A\coloneqq \varphi(\cdot-A)+\varphi(\cdot+A)$ we have $\int_\C\varphi_AT\varphi_A  \sim 2\times \frac{-1}{\pi}(\int \varphi)^2 (2A)^{-2} \not=0$ as $A\to\infty.$
By scaling and translating the support
may be taken to be in $(0,1)^2$, and the choice $g=\varphi_A$ for large enough $A$ completes the proof
of Theorem \ref{th:main}.  
\end{proof}

\smallskip

We  next sketch 
an alternative statement of  the solution to the homogenization problem,
replacing `almost sure convergence' by `convergence in probability'. Then there is no need to restrict to subsequences
of $\delta\to 0.$ In order to rephrase
Theorem \ref{th:main} in this manner, consider  the principal solution $f_\delta$ of the homogenization problem
\begin{equation}\label{eq:problem}
\deeb F_\delta=\phi B_{\delta}  \dee F_\delta.
\end{equation}
In the case where the envelope function $\phi$ is compactly supported,
we know that the terms $\psi_{m,\delta}$ in the corresponding Neumann-series are all supported in a ball $B(0,R)$, where $R$ is independent of $\delta.$ Each term in the series converges weakly in probability in $L^p(B(0,R))$ as $\delta\to 0,$ i.e., 
for any $h\in L^{p'}$ there is the convergence in probability 
$$
\int_{\C} h\psi_{m,\delta}\to \int_{\C} h \psi_{m}.
$$
Moreover, the Neumann series converges  $L^p(\C)$, with an exponentially decaying remainder term, uniformly with respect
to $\delta>0$. All this easily implies a norm convergence in $C^\alpha$ (compare the proof of Lemma \ref{lisaco:2.1}), i.e.
$$
\P \big( \| f_\delta -f\|_{C^\alpha(\C)}>  t \big)<  t 
$$
for all $t >0$ as soon as $\delta <\delta_0( t ).$ In particular, $f_\delta\to f$ locally uniformly
in probability. Finally, we may argue exactly as  in the proof of
Theorem \ref{th:main} and dispense with the assumption that the envelope has compact support. Let us record our conclusion as a theorem:

\begin{theorem}\label{th:inprobability} Let $\mu_\delta$ be as in Theorem
\ref{th:main} and denote by $F_\delta$ the $3$-point normalized solution to the Beltrami equation \eqref{eq:problem}. Then,
$F_\delta\to F_\infty$ locally uniformly in probability as $\delta\to 0,$
where $F_\infty$ is the deterministic limit map given by Theorem \ref{th:main}. In other words, for any $R>0$ and $\varepsilon>0$
one has for $\delta<\delta_0(\varepsilon,R)$ that
$$
\P\big( \|F_\delta-F\|_{L^\infty (B(0,R))}>\varepsilon\big)<\varepsilon.
$$
\end{theorem}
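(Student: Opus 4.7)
The plan is to upgrade the subsequential almost-sure convergence provided by Theorem \ref{th:main}(i) to convergence in probability along the full continuous parameter $\delta \to 0$, by leveraging the quantitative concentration bound \eqref{conc} of Theorem \ref{mainthm} via a subsequence-extraction argument. First, I would reduce to the case of a compactly supported envelope function $\phi$, exactly as in the proof of Theorem \ref{th:main}(i): writing $\phi_L = \phi\, 1_{B(0,L)}$ and applying Lemma \ref{qc_locality}, both $F_\delta$ and $F_\infty$ are approximated on $B(0,R)$ by their $L$-truncated versions $F_{\delta,L}$ and $F_{\infty,L}$ up to an error $\varepsilon_L(R,k) \to 0$ as $L \to \infty$, uniformly in $\delta$. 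Hence it suffices to prove the theorem when $\supp(\mu_\delta)$ lies in a fixed ball $B$.

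Under this compactness assumption, I would work with the principal solution $f_\delta = z + \sum_{m\geq 1} C\psi_{m,\delta}$, where $\psi_{m,\delta} \coloneqq \mu_\delta T \mu_\delta \cdots T\mu_\delta$ ($m$ factors). Fixing $p > 2$ close enough to $2$ that $\|T\|_{L^p\to L^p}\, k < 1$, the deterministic pointwise bound $|\mu_\delta(z,\omega)| \leq k$ yields $\|\psi_{m,\delta}(\cdot,\omega)\|_{L^p(B)} \leq C a^m$ with some $a<1$, \emph{pointwise in $\omega$}. Since $C\colon L^p(B) \to C^\alpha(\C)$ is bounded and compact for $\alpha = 1-2/p$, we obtain the uniform-in-$(\delta,\omega)$ tail bound $\|\sum_{m > M} C\psi_{m,\delta}\|_{C^\alpha(\C)} \leq C a^M/(1-a)$, with the analogous bound for $f_\infty$. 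Thus for every $\eta > 0$ I can choose $M$ so that the tails contribute at most $\eta/3$ deterministically, reducing the problem to controlling the finite sum $\sum_{m\leq M} C(\psi_{m,\delta} - \psi_m)$.

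For this finite sum I would argue by contradiction: if $f_\delta \not\to f_\infty$ in $C^\alpha(\C)$ in probability, there exist $\eta' > 0$ and $\delta_k \to 0$ with $\P(\|f_{\delta_k} - f_\infty\|_{C^\alpha(\C)} > \eta') \geq \eta'$ for all $k$. Fix a countable dense family $\{\varphi_i\} \subset C^\infty_0(\R^2)$. For each pair $(i,m)$ with $m \leq M$, Theorem \ref{mainthm} gives $\P(|\int \varphi_i (\psi_{m,\delta} - \psi_m)\, dx| > \delta^\eps) \lesssim \delta^\eps$ for some $\eps > 0$. Thinning $\delta_k$ to a sub-subsequence $\delta_{k_j}$ with $\sum_j \delta_{k_j}^\eps < \infty$, applying Borel-Cantelli, and diagonalizing over the countable index set $(i,m)$, I obtain almost-sure convergence $\int \varphi_i \psi_{m,\delta_{k_j}} \to \int \varphi_i \psi_m$ for every $(i,m)$. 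Combined with the uniform $L^p(B)$-bound on $\psi_{m,\delta}$, this is almost-sure weak convergence $\psi_{m,\delta_{k_j}} \to \psi_m$ in $L^p(B)$ for all $m \leq M$; compactness of $C$ then upgrades this to almost-sure $C^\alpha$-convergence of each term, hence of the full truncated sum, contradicting the assumed lower bound along $\delta_k$. Therefore $f_\delta \to f_\infty$ in $C^\alpha(\C)$ in probability.

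Finally, since $f_\infty$ is a quasiconformal homeomorphism (so $f_\infty(1) \neq f_\infty(0)$), the 3-point renormalization $g \mapsto (g(1)-g(0))^{-1}(g(\cdot) - g(0))$ is continuous at $f_\infty$ in the $C^0_{\loc}$-topology, transferring convergence in probability from $f_\delta$ to locally uniform convergence in probability of $F_\delta$ on each $B(0,R)$. The main obstacle is ensuring the Neumann-series tail can be truncated \emph{pointwise in $\omega$}, so that the $\omega$-by-$\omega$ Calder\'on-Zygmund argument of Lemma \ref{lisaco:2.1} survives under only probabilistic (not almost-sure) control on the finitely many terms that remain; this is precisely what the deterministic contraction $\|T\|_{L^p} k < 1$, together with the compactness of $C\colon L^p(B) \to C^\alpha$, accomplishes.
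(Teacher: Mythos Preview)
Your proposal is correct and follows essentially the same approach as the paper's sketch: reduce to compactly supported $\phi$ via Lemma \ref{qc_locality}, control the Neumann tail deterministically using $\|\mu_\delta\|_\infty \leq k$, and pass the finitely many remaining terms through the compact operator $C\colon L^p(B)\to C^\alpha$. The only difference is cosmetic: where the paper asserts that weak-$L^p$ convergence in probability of each $\psi_{m,\delta}$ ``easily implies'' $C^\alpha$-convergence in probability, you make this rigorous via the standard subsequence characterization of convergence in probability (extract a sub-subsequence, apply Borel--Cantelli and the almost-sure argument of Lemma \ref{lisaco:2.1}, and derive a contradiction), which is exactly the natural way to fill in that step.
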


As our final application to quasiconformal homogenization we consider some random mappings of finite distortion, i.e.,  homeomorphisms for which the assumption $\| \mu \|_{\infty} \leq a < 1$ is relaxed. This leads to the  study of solutions to the Beltrami equation $\partial_\zbar f = \mu \partial_z f$ where
 we only have  $| \mu(z)| < 1$ almost everywhere. From the general theory of quasiconformal mappings and mappings of finite distortion one knows  that in order to have a viable theory one needs some control on the size of the set where $| \mu(z)|$ is close to $1$. 
 For basic properties of planar maps of finite distortion 
we refer to \cite[Chapter 20]{AIM} or \cite{AGRS}.

There is a well-established theory for mappings of
G. David type, i.e., maps whose distortion function 
$$
K(z)\coloneqq \frac{1+|\mu (z)|}{1-|\mu (z)|}
$$
is exponentially integrable, namely $\exp(aK(z))\in L^1_{loc}$ for some
$a>0.$ 
With this theory in mind, a natural model for degenerate  random  Beltrami coefficients is
\begin{equation}\label{degeq}
\mu_j(z)\coloneqq \sum_{n\in\Z^2:\; 2^{-j}n\in [0,1)^2}\varepsilon_{j,n}g(2^jz-n).
\end{equation}
where $\|g\|_{L^\infty(\C)}=1,$ one has supp$(g)\subset[0,1]^2$, and for each $j\geq 1$ we
assume that $\varepsilon_{j,n}$ ($n\in\Z^2$) are complex valued
i.i.d. random variables taking values in $\D$. Their common distribution is assumed to be independent of $j$.
In this situation we have the following result:
\begin{theorem}\label{degeco:2.1}  Assume  the uniform tail estimate
\begin{equation} \label{taildecay}
 \P\left(  \frac{1+|\varepsilon_{j,n}|}{1-|\varepsilon_{j,n}|}  > t \right) \leq  e^{-  \gamma\, t}
\end{equation}
for some $ \gamma >2.$
Define the {\rm(}possibly degenerate{\rm )} Beltrami coefficients  $\mu_j$ as in \eqref{degeq}.
Then the 3-point 
normalized solutions $F_j$ of the Beltrami equation $\deeb F_j=\mu_j \dee F_j$ converge almost surely  locally uniformly to a deterministic limit homeomorphism $F:\C\to\C$. 
\end{theorem}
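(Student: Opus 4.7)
The plan is to reduce Theorem~\ref{degeco:2.1} to the uniformly quasiconformal setting of Theorem~\ref{th:main} by truncation, and to close the approximation using the exponential integrability of the distortion. For each $M>1$, let $\varepsilon_{j,n}^{(M)}\coloneqq \varepsilon_{j,n}\,1_{\{K(|\varepsilon_{j,n}|)\le M\}}$, where $K(t)\coloneqq (1+t)/(1-t)$, and let $\mu_j^{(M)}$ denote the Beltrami coefficient obtained by replacing each $\varepsilon_{j,n}$ with $\varepsilon_{j,n}^{(M)}$ in \eqref{degeq}. Then $\|\mu_j^{(M)}\|_\infty\le (M-1)/(M+1)<1$ uniformly in $j$, and $\mu_j^{(M)}$ fits the framework of Theorem~\ref{th:main}, with Beltrami envelope $\phi=1_{[0,1]^2}$ and random bump data $(g,\,X\,1_{\{K(|X|)\le M\}})$. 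Hence the $3$-point normalized solutions $F_j^{(M)}$ converge almost surely locally uniformly as $j\to\infty$ to a deterministic $\tfrac{M-1}{M+1}$-quasiconformal limit $F_\infty^{(M)}$.

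Next I would establish almost sure uniform exponential integrability of $K_{\mu_j}$. Fix $a\in(1,\gamma/2)$, a nonempty interval precisely because $\gamma>2$. Since $|g|\le 1$ and $K$ is increasing, $K_{\mu_j}(z)\le K(|\varepsilon_{j,n}|)$ on the cell $2^{-j}n+[0,2^{-j})^2$, and after change of variable,
$$
I_j\coloneqq \int_{[0,1]^2}e^{aK_{\mu_j}}\,dA=4^{-j}\!\!\sum_{n\colon 2^{-j}n\in[0,1)^2}\!\Phi_n^{(j)},\qquad \Phi_n^{(j)}\coloneqq \int_{[0,1]^2}e^{aK(|\varepsilon_{j,n}g(w)|)}\,dw,
$$
is an average of $4^j$ i.i.d.\ random variables with second moment bounded by $\E[e^{2aK(|\varepsilon|)}]<\infty$ (by \eqref{taildecay} and $2a<\gamma$). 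Since $\mathrm{Var}(I_j)=O(4^{-j})$, Chebyshev's inequality together with Borel--Cantelli shows that $I_j$ converges almost surely to the deterministic constant $\E\Phi_n^{(j)}$; in particular $I_j$ is uniformly bounded in $j$ almost surely. An identical computation with $1_{\{K>M\}}$ in place of $e^{aK}$ yields $|E_j^{(M)}|\le C\,e^{-\gamma M}$ almost surely, uniformly in $j$, where $E_j^{(M)}\coloneqq \{z\in[0,1]^2\colon K_{\mu_j}(z)>M\}$ is the ``bad set'' on which $\mu_j$ and $\mu_j^{(M)}$ differ.

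Finally, the uniform $L^1$-bound on $e^{aK_{\mu_j}}$ activates David's normal family theorem (\cite[Ch.~20]{AIM}), showing that $\{F_j\}_{j\ge 1}$ is almost surely precompact in the locally uniform topology on $\C$. To identify all subsequential limits, I would invoke continuous dependence of David solutions on their coefficient: given two normalized David maps whose coefficients have uniformly bounded $\int e^{aK}$ and agree outside a set of Lebesgue measure $\eta$, the corresponding maps differ in $C(B(0,R))$ by $o_{\eta\to 0}(1)$. Applied to the pair $(\mu_j,\mu_j^{(M)})$, whose disagreement set $E_j^{(M)}$ has measure at most $Ce^{-\gamma M}$, this gives $\|F_j-F_j^{(M)}\|_{C(B(0,R))}\to 0$ as $M\to\infty$, uniformly in $j$, almost surely. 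Combined with the first step, the sequence $(F_\infty^{(M)})_M$ is locally uniformly Cauchy, and setting $F\coloneqq \lim_{M\to\infty}F_\infty^{(M)}$ yields the desired almost sure locally uniform convergence $F_j\to F$. The main obstacle is this David-type stability result, which is proved via Stoilow factorization $F_j=G_j^{(M)}\circ F_j^{(M)}$ with $G_j^{(M)}$ supported in the small image $F_j^{(M)}(E_j^{(M)})$, together with quantitative modulus-of-continuity estimates for David maps. The condition $\gamma>2$ enters precisely at this step, by providing the uniform Jacobian integrability $J_{F_j^{(M)}}\in L^p_{\mathrm{loc}}$ for some $p>1$ that underlies the stability estimate.
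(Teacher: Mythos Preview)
Your approach via truncation and stability is genuinely different from the paper's. The paper works directly with the Neumann series $f_j = z + C\sum_m \psi_{m,j}$: each term $\psi_{m,j}$ converges weakly by Corollary~\ref{co:main}, and the tail is controlled uniformly in $j$ by the decay bound $\|\psi_m\|_{L^2}\lesssim m^{-q}$ from \cite{AGRS}, valid whenever $e^{pK}\in L^1$ with $p>2$ (so that one may take $q>1$). The hypothesis $\gamma>2$ enters precisely here, to allow such a $p>2$ while still making $\sup_j\int e^{pK_j}<\infty$ almost surely via a von~Bahr--Esseen moment argument. Equicontinuity of $(f_j)$ and $(f_j^{-1})$ then comes from the Goldstein--Vodopyanov and Iwaniec--\v{S}ver\'ak lemmas, and uniqueness of the limit from VMO convergence of the truncated series.

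Your scheme is attractive in outline, but the stability step has a real gap. You claim $J_{F_j^{(M)}}\in L^p_{\mathrm{loc}}$ for some $p>1$ \emph{uniformly in $M$}, attributing this to $\gamma>2$. However, $F_j^{(M)}$ is $M$-quasiconformal, and Astala's sharp higher integrability gives only $J_{F_j^{(M)}}\in L^q$ for $q<M/(M-1)$, which degenerates to $q=1$ as $M\to\infty$. The uniform David bound $\int e^{aK_{F_j^{(M)}}}\le \int e^{aK_{\mu_j}}$ does not rescue this: exponential integrability of the distortion yields only Orlicz-type regularity for the Jacobian (roughly $J_f\log^\beta(e+J_f)\in L^1$, see \cite{AGRS}), never $L^p$ for any $p>1$. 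Concretely, from $|E_j^{(M)}|\lesssim e^{-aM}$ and the $M$-quasiconformal area distortion $|F_j^{(M)}(E)|\lesssim |E|^{1/M}$ one obtains only $|F_j^{(M)}(E_j^{(M)})|\lesssim e^{-a}$, which is bounded but not small in $M$. Even granting (via the Orlicz bound) that this image area decays like $M^{-\beta}$, the Beltrami coefficient of $G_j^{(M)}$ on that image has modulus arbitrarily close to~$1$, so smallness of the support alone does not force $G_j^{(M)}$ close to the identity. Without a substitute for this step the argument does not close; the paper avoids the issue entirely by never leaving the Neumann series.
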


\begin{proof}
We start  the proof  with  a couple of auxiliary observations.
First of all, we again use that convergence of the 3-point normalizations is equivalent to convergence of the hydrodynamically normalized ones. 

Thus, we 
again consider the principal solution
\begin{equation}
\label{neumann2}
f_j(z): = z + C  \big(\sum_{m=1}^\infty \psi_{m,j}\big),
\end{equation}
of the Beltrami equation, where as before $\psi_{m,j}= \mu_j T\mu_j\ldots T\mu_j$
with $\mu_j$ occuring $m$ times. This series is  well-defined since almost surely each $\mu_j$ satisfies 
$$ \|\mu_j\|_{L^\infty (\C)}\leq \max\{|\varepsilon_{j,n}|:n\in\Z^2,\; 2^{-j}n\in [0,1)^2\} < 1.   $$

By Corollary \ref{co:main}, almost surely each of  the terms $ \psi_{m, j}$  converges  weakly to a limit $\psi_m$ in $L^p$ for every $1 < p<\infty$, and     $C( \psi_{m, j})(z)$ converges locally uniformly on $\C$.  Therefore  we expect that 
the limit map can be written again as
\begin{equation}\label{neumann3}
f_\infty= z + C  \big(\sum_{m=1}^\infty \psi_{m}\big),
\end{equation}
and in proving the convergence
one only needs to control the tail of this series. 
Our main tool will be the following 
statement:
\begin{equation}
\label{sarja}
\lim_{M\to \infty }\sup_{j\geq 1} \;  \sum_{m=M}^\infty \| \psi_{m,j} \|_{L^2(\complex)} \; = 0 \quad\quad \mbox{almost surely}\, .
\end{equation}
The proof of \eqref{sarja} is based on the following basic estimate
\cite[Theorem 3.1]{AGRS} (see also \cite{D}) 
 with $R=2$
 on the decay of the $L^2$-norm of the terms in the Neumann series.
\begin{lemma}\label{le:decay} Assume that the dilatation $\mu$ is compactly
supported, supp$(\mu)\subset B(0,R).$ If for some $p>0$
we have
\begin{equation}\label{eq:expint}
A\coloneqq \int_{B(0,R)}e^{pK(z)} dz <\infty,
\end{equation}
where $K\coloneqq \frac{1+|\mu |}{1-|\mu |},$ then for any $q\in (0,p/2)$ the $m$-th term in the Neumann-series satisfies the bound
\begin{equation}\label{eq:m_term_bound}
\|\psi_m\|_{L^2(\C)}\leq C_{R,q,A}m^{-q}
\end{equation}
\end{lemma}
Denote the distortion function of  $f_j$ by $K_j(z)\coloneqq \frac{1+|\mu_j(z) |}{1-|\mu_j(z)  |},$ where
$\mu_j$ is as in \eqref{degeq}. In view of the above lemma, 
\eqref{sarja} follows as soon as we verify that
there is $p>2$ such that
\begin{equation}\label{eq:unif_exp_bound}
 \sup_{j\geq 1}\int_{[0,1]^2}e^{pK_j(z)} dz <\infty \qquad   \textrm{almost surely}\,  .
\end{equation}
To this end, choose $q\in (1,2)$ and $p>2$ so that $pq<\gamma,$ where $\gamma>2$ is from condition \eqref{taildecay}. Denote by $Y$ a random variable  with the distribution
$$
Y\sim \exp\left(p\Big(\frac{1+|\varepsilon|}
{1-|\varepsilon|}\Big)\right)-M\qquad \textrm{with}\quad M\coloneqq \E
\exp\left( p\Big(\frac{1+|\varepsilon|}{1-|\varepsilon|}\Big)\right) ,
$$
where $\varepsilon$ has the same 
distribution as all of the variables $\varepsilon_{j,n}$.
The expectation  $M$ above is finite according to
our assumption \eqref{taildecay}, in fact
 $\E Y^q<\infty.$ The very definition of $\mu_j$ yields that
$$
\int_{[0,1]^2}e^{pK_j(z)}dz\leq M+Z_j,
$$
with
$$
Z_j\sim 2^{-2j}\sum_{\ell=1}^{2^{2j}}
Y_{j,\ell}
$$ 
where for each $j\geq 1$ the random variables $Y_{j,\ell}$ are identically distributed copies of $Y.$ In order to estimate the
tail of $Z_j$, we recall the von Bahr and Esseen estimate \cite{BE} that states  for centered i.i.d. 
random variables
$X_1,\ldots , X_N$
the inequality
$$
\E\left| X_1+\ldots X_N\right|^q\leq C_q \sum_{s=1}^N\E\left| X_s \right|^q,\qquad 1\leq q\leq 2.
$$
We obtain
\begin{align*}
\P(Z_j>1)\leq \E Z_j^q\leq 2^{-2jq}C_q2^{2j}\E Y^q=O(2^{-2(q-1)j}),
\end{align*}
and the Borel-Cantelli lemma yields that almost
surely eventually $Z_j\leq 1.$ This proves \eqref{eq:unif_exp_bound},
and we have finished the verification of \eqref{sarja}.

We will prove Theorem
\ref{degeco:2.1} using the Arzela-Ascoli theorem. To this end we need uniform modulus of continuity estimates
for both sequences $(f_j)$ and $(f_j^{-1}).$ 
Here note first that \eqref{sarja}
implies the uniform bounds (with a random constant $C$)
\begin{equation}
\|\deeb f_j\|_{L^2(\C)} =  \|\dee f_j-1\|_{L^2(\C)}\leq C,
\quad \textrm{for all}\;\; j\geq 1.
\end{equation}
Since the support of each $\mu_j$ is contained in $2\D$, this estimate together with the properties of the Cauchy transform shows that, outside $3\D,$ the functions $f_j$ are  uniformly equicontinuous and $f_j(z)-z$ is uniformly bounded.
Thus  uniform equicontinuity in all of $\C$  follows from the following useful result (see \cite{GoldsteinVodopyanov},\cite[Theorem 20.1.6]{AIM}).
\begin{lemma}[Gehring, Goldstein and Vodopyanov]\label{GV}
Assume that  $f\in W^{1,2} (4\D)$ is a homeomorphism. Then, if $z_1,z_2\in 4\D$ one has
$$
|f(z_1)-f(z_2)|^2\;\leq \;\; \frac {9\pi\int_{2\D} |\nabla f|^2}{\log (e+1/|z_1-z_2|)}.
$$
\end{lemma}

Next, the equicontinuity of the inverse maps is dealt with by another lemma (whose proof actually reduces the situation to Lemma \ref{GV}, see \cite{ISv},\cite[Lemma 20.2.3]{AIM}).
\begin{lemma}[Iwaniec and Sverak] Assume that $f$ is a (homeomorphic) principal solution of the Beltrami equation with distortion function $K$, and with $\mu$
supported in $B(0,R')$.  Then, for $z_1,z_2$ in the disc $B(0,R),$ the inverse map $g\coloneqq f^{-1}$ satisfies 
$$
|g(z_1))-g(z_2)|^2\;\leq \;\; \frac {C(R,R')}{\log (e+1/|z_1-z_1|)}\int_{B(0,R')} K(z) dz.
$$
\end{lemma}
\noindent 
The original version assumes that $\mu$ is supported
in $\D$, but the more general statement follows  again by scaling. Now \eqref{eq:unif_exp_bound} entails that in our case
$\int_{B(0,R)} K_j(z) dz$ is uniformly bounded, and we obtain a (locally) uniform  modulus of continuity for the inverse maps $f^{-1}_j$. 

Now Theorem \ref{degeco:2.1} follows quickly. Almost surely, 
we have local uniform equicontinuity  for both sequences 
$(f_j)$ and $(f^{-1}_j)$, uniform boundedness of $f_j(z)$ at every point $z$ outside $3\D,$
and thus locally uniform subsequential convergence to a homeomorphism by Arzela-Ascoli.

Moreover, as before in the proof of Lemma \ref{lisaco:2.1},
almost surely each term in the series
\eqref{neumann2} converges locally uniformly. Also,  \eqref{sarja}
implies that the $VMO$-norm of the remainder in \eqref{neumann2} converges uniformly to zero (\cite{AIM}, Theorem 4.3.9). Put together, we deduce the convergence in $VMO(3\D)$ of the whole sequence $f_j$. Since the $f_j$ are analytic outside $2\D,$ this implies the uniqueness of the subsequential limit in $\C$ and establishes  almost sure locally uniform converge 
$f_j\to f_\infty$, where the limit $f_\infty$ is  a self-homeomorphism
of the plane given by  \eqref{neumann3}.
\end{proof}

Let us finally observe that the above proof actually yields the following more general results, stated both for the deterministic and
random homogenization problem. 
\begin{theorem}\label{th:last_deterministic} Let $\mu = \mu_\delta$ be a compactly supported deterministic multiscale function such that for every $0 < \delta < 1$, we have $|\mu_\delta(x)| < 1$ for almost all $x$, and furthermore the dilatation
$K_{\mu_\delta}(x) \coloneqq  \frac{1 + |\mu_\delta(x)|}{1 - |\mu_\delta(x)|}$
is such that  $\int_{\C} e^{pK_{\mu_\delta}}$ is bounded uniformly in $\delta$ for some $p>2.$  Then the associated normalized solutions $F_\delta$ with dilatation $\mu_\delta$ converge locally uniformly in distribution to a homeomorphism $F_\infty \colon \C\to\C$ as $\delta \to 0$.
\end{theorem}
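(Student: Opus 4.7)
The plan is to follow the structure of the proof of Theorem~\ref{degeco:2.1}, but in the deterministic setting where every almost-sure statement becomes automatic. Since the $3$-point normalization is an affine rearrangement of the principal solution, it suffices to study the principal solutions
$$
f_\delta(z) \;=\; z + C\!\Bigl(\sum_{m=1}^\infty \psi_{m,\delta}\Bigr), \qquad \psi_{m,\delta} \coloneqq \mu_\delta T \mu_\delta \cdots T \mu_\delta \quad (m\text{ factors}),
$$
well-defined in $L^2$ since $|\mu_\delta|<1$ a.e.\ and the $\mu_\delta$ share a common compact support $B$ (which is implicit in the multiscale-tensor-product structure once the envelope factors are compactly supported).

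First I would handle each term individually. Because $\mu_\delta$ is a deterministic multiscale function, Corollary~\ref{singular_equivalence} gives weak convergence $\psi_{m,\delta}\rightharpoonup\psi_m$ in every $L^p(\C)$, $1<p<\infty$, and the compactness of $C\colon L^p(B)\to C^\alpha(\C)$ for $p>2$ upgrades this to locally uniform convergence $C\psi_{m,\delta}\to C\psi_m$ on $\C$, exactly as in Lemma~\ref{lisaco:2.1}. The crucial step is then a uniform-in-$\delta$ tail estimate. I would feed Lemma~\ref{le:decay} with the hypothesis $\sup_\delta \int_B e^{pK_{\mu_\delta}}<\infty$ for $p>2$ to obtain, for any $q\in(1,p/2)$,
$$
\|\psi_{m,\delta}\|_{L^2(\C)}\leq Cm^{-q}\qquad\text{uniformly in }\delta,
$$
which is summable since $q>1$; in particular $\sum_{m\geq M}\|\psi_{m,\delta}\|_{L^2(\C)}\to 0$ uniformly in $\delta$ as $M\to\infty$, so $\|\deeb f_\delta\|_{L^2(\C)}$ is uniformly bounded.

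This uniform $L^2$-bound feeds the Gehring--Goldstein--Vodopyanov lemma (Lemma~\ref{GV}) to yield a uniform local modulus of continuity for $(f_\delta)$; outside $B$ the maps $f_\delta-\mathrm{id}$ are analytic and vanish at $\infty$, extending equicontinuity to all of $\C$. The Iwaniec--Sverak lemma together with the uniform bound on $\int_B K_{\mu_\delta}$ (which follows from the exponential integrability via the pointwise inequality $K\leq p^{-1}e^{pK}$) yields the matching modulus for the inverses $(f_\delta^{-1})$. Arzelà--Ascoli then extracts locally uniformly convergent subsequences through self-homeomorphisms of $\C$. Uniqueness of the limit is settled as in Theorem~\ref{degeco:2.1}: the locally uniform convergence of each $C\psi_{m,\delta}$, combined with the uniform-in-$\delta$ smallness of $C(\sum_{m\geq M}\psi_{m,\delta})$ in $\mathrm{VMO}$ (using $C\colon L^2\to\mathrm{VMO}$, \cite[Thm.~4.3.9]{AIM}), forces $f_\delta\to f_\infty\coloneqq z+C\bigl(\sum_m\psi_m\bigr)$ in $\mathrm{VMO}_{\mathrm{loc}}$, pinning down any subsequential uniform limit, hence the entire net.

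The main obstacle is thus the uniformity in $\delta$ of the Neumann-series tail. In the uniformly elliptic case this was automatic from $\|\mu_\delta\|_\infty\leq k<1$, whereas here it depends essentially on extracting summable decay of $\|\psi_{m,\delta}\|_{L^2}$ via Lemma~\ref{le:decay}. This is precisely why the hypothesis demands $p>2$ rather than merely exponential integrability for some positive exponent: Lemma~\ref{le:decay} produces a decay rate $m^{-q}$ with $q<p/2$, which is summable only once $p>2$.
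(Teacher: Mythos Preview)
Your proposal is correct and follows essentially the same approach as the paper, which in fact gives no separate proof but simply remarks that ``the above proof actually yields the following more general results''---referring to the proof of Theorem~\ref{degeco:2.1}. You have accurately traced through that argument in the deterministic setting: the probabilistic work there (the von Bahr--Esseen and Borel--Cantelli step establishing \eqref{eq:unif_exp_bound}) is replaced by the hypothesis $\sup_\delta\int e^{pK_{\mu_\delta}}<\infty$, while the rest---Corollary~\ref{singular_equivalence} for weak convergence of each Neumann term, Lemma~\ref{le:decay} for the uniform tail, Lemmas~\ref{GV} and the Iwaniec--\v{S}ver\'ak lemma for equicontinuity of the maps and their inverses, and the VMO identification of the limit---carries over verbatim.
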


\begin{theorem}\label{th:last_random}   Let $\mu = \mu_\delta$ be a stochastic multiscale function such that for 
$\delta>0$ we have almost surely $|\mu_{\delta}(x)| < 1$ for almost all $x$, and furthermore   for some $p>2$ almost  surely
$\sup_{j\geq1}\int_{\C} e^{pK_{\mu_{2^{-j}}}}<\infty $. 
Then the associated normalized solutions $F_{\mu_{2^{-j}}}$ are almost surely locally uniformly convergent as $j\to \infty$.  
\end{theorem}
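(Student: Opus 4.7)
The approach closely parallels the proof of Theorem \ref{degeco:2.1}, with the specific random dilatations \eqref{degeq} replaced by a general stochastic multiscale function. First I would reduce from the 3-point normalized solutions $F_{\mu_{2^{-j}}}$ to the principal solutions via $F_j(z) = (f_j(1)-f_j(0))^{-1}(f_j(z)-f_j(0))$, reducing the claim to almost sure locally uniform convergence of $f_j$. The principal solution admits the Neumann series
\begin{equation*}
f_j = z + C\biggl(\sum_{m=1}^\infty \psi_{m,j}\biggr), \qquad \psi_{m,j} \coloneqq \mu_{2^{-j}} T \mu_{2^{-j}} \cdots T \mu_{2^{-j}}
\end{equation*}
with $m$ factors of $\mu_{2^{-j}}$, which converges in $L^2(\C)$ almost surely since $\|\mu_{2^{-j}}\|_\infty < 1$.

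Two ingredients then drive the convergence. First, by Corollary \ref{co:main} applied with all stochastic multiscale functions equal to $\mu$ and all singular integrals equal to the Beurling transform $T$, each $\psi_{m,j}$ converges almost surely weakly in $L^p(\C)$, for every $p \in (1,\infty)$, to a deterministic limit $\psi_m$. Second, the assumption that almost surely $\sup_j \int_\C e^{pK_{\mu_{2^{-j}}}} < \infty$ for some $p > 2$ allows a direct application of Lemma \ref{le:decay}, yielding $\|\psi_{m,j}\|_{L^2(\C)} \leq C m^{-q}$ uniformly in $j$ for some $q > 1$. Consequently almost surely
\begin{equation*}
\lim_{M \to \infty}\sup_{j \geq 1}\sum_{m = M}^\infty \|\psi_{m,j}\|_{L^2(\C)} = 0,
\end{equation*}
which is the analogue of \eqref{sarja}. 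Note that in contrast to Theorem \ref{degeco:2.1}, the exponential integrability bound is given by hypothesis rather than derived via a von Bahr--Esseen argument.

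Given these two ingredients, the rest of the argument mimics Theorem \ref{degeco:2.1} closely: the uniform tail bound yields $\|\partial_{\overline z} f_j\|_{L^2(\C)}$ uniformly bounded almost surely, so the Gehring--Goldstein--Vodopyanov lemma (Lemma \ref{GV}) provides uniform local equicontinuity of $f_j$, while the Iwaniec--Sverak lemma together with $\sup_j \int K_{\mu_{2^{-j}}} < \infty$ (an immediate consequence of the exponential integrability assumption) gives uniform local equicontinuity of the inverse maps $f_j^{-1}$. Arzela--Ascoli extracts a locally uniformly convergent subsequence, whose limit is a homeomorphism. Uniqueness of the limit, and hence full sequential convergence, follows from the almost sure weak convergence of each $\psi_{m,j}$ together with $L^2 \to \mathrm{VMO}$ control of $C$ applied to the uniformly small tail, forcing $f_\infty = z + C(\sum_m \psi_m)$ almost surely.

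The main technical obstacle is that a general stochastic multiscale function need not be compactly supported, whereas the proof of Theorem \ref{degeco:2.1} uses compact support of the dilatations in a fixed ball in an essential way (for $L^p$ boundedness of the Neumann series with $p$ close to $2$, and for controlling the behaviour of $f_j$ at infinity). The exponential integrability assumption with $K_\mu \geq 1$ forces $\mu$ to vanish outside a set of finite measure, but not necessarily outside a bounded set. The resolution is a localization argument analogous to Lemma \ref{lisaco:2.2}: truncate $\mu_\delta$ to $\mu_\delta 1_{B(0,L)}$, apply the compactly supported version for each $L$, and pass to $L \to \infty$ via Lemma \ref{qc_locality}, whose hypotheses are verified using the uniform Neumann series tail control established above.
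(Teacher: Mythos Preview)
Your proposal is correct and follows precisely the approach indicated by the paper, which gives no independent proof of this theorem but simply states that ``the above proof actually yields'' it---that is, the argument for Theorem~\ref{degeco:2.1} goes through verbatim once the exponential integrability bound \eqref{eq:unif_exp_bound} is taken as a hypothesis rather than established via the von~Bahr--Esseen estimate.

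Your discussion of the compact-support issue is a point the paper does not address. Note that the companion deterministic statement (Theorem~\ref{th:last_deterministic}) explicitly assumes compact support, and Lemma~\ref{le:decay} is stated only for compactly supported dilatations; moreover, as you implicitly recognize, the literal hypothesis $\int_\C e^{pK_{\mu}}<\infty$ is vacuous for non-compactly supported $\mu$ since $K_\mu\geq 1$ everywhere. The most natural reading is therefore that compact support is tacitly assumed here as well, in which case your localization paragraph is unnecessary but harmless. If one insists on a non-compactly supported formulation, your proposed reduction via Lemma~\ref{lisaco:2.2} and Lemma~\ref{qc_locality} is the right idea, though one would then need to reinterpret the integrability hypothesis (e.g., as a uniform bound over $B(0,L)$ for each $L$) for the statement to be nonvacuous.
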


\bigskip

\noindent {\bf AFFILIATIONS:}

\begin{description}
\item Kari Astala:\; {\tt kari.astala@aalto.fi}\quad
Department of Mathematics and Systems Analysis, Aalto University, FI-00014 Finland.

\item Steffen Rohde:\quad {\tt rohde@math.washington.edu}\quad
Department of Mathematics, U. Washington, Seattle WA 98195.

\item Eero Saksman:\quad{\tt eero.saksman@helsinki.fi}\quad
Department of Mathematics and Statistics, University of Helsinki, FI-00014 Finland.

\item Terence Tao:\quad {\tt tao@math.ucla.edu}\quad
Department of Mathematics, UCLA, 405 Hilgard Avenue, Los Angeles CA 90095.

\end{description}
\end{document}